\title
{Random graphs with given vertex degrees and switchings}
\date{28 January, 2019; revised 30 January, 2019}
\author{Svante Janson}
\thanks{Partly supported by the Knut and Alice Wallenberg Foundation}
\address{Department of Mathematics, Uppsala University, PO Box 480,
SE-751~06 Uppsala, Sweden}
\email{svante.janson@math.uu.se}
\urladdr{http://www.math.uu.se/svante-janson}
\subjclass[2010]{05C80, 60C05}
\numberwithin{equation}{section}
\renewcommand\le{\leqslant}
\renewcommand\ge{\geqslant}
\theoremstyle{plain}
\newtheorem{theorem}{Theorem}[section]
\newtheorem{lemma}[theorem]{Lemma}
\newtheorem{corollary}[theorem]{Corollary}
\newtheorem{claim}{Claim}
\theoremstyle{definition}
\newtheorem{example}[theorem]{Example}
\newtheorem{remark}[theorem]{Remark}
\theoremstyle{remark}
\newenvironment{romenumerate}[1][-10pt]{
\addtolength{\leftmargini}{#1}\begin{enumerate}
 }{\end{enumerate}}
\newenvironment{PXenumerate}[1]{
\addtolength{\leftmargini}{-10pt}%
\begin{enumerate}
 }{\end{enumerate}}
\newcounter{oldenumi}
\newenvironment{PXenumerateq}[1]
{\setcounter{oldenumi}{\value{enumi}}%
\begin{PXenumerate}{#1}\setcounter{enumi}{\value{oldenumi}}}%
{\end{PXenumerate}}
\newcounter{thmenumerate}
\newenvironment{thmenumerate}
{\setcounter{thmenumerate}{0}%
 \def\item{\par
 \refstepcounter{thmenumerate}\textup{(\roman{thmenumerate})\enspace}}
}
{}
\newcounter{xenumerate}   
\newcommand\pfitem[1]{\par(#1):}
\newcommand\pfitemx[1]{\par#1:}
\newcommand\pfitemref[1]{\pfitemx{\ref{#1}}}
\newcounter{kasus}
\newcommand{\refT}[1]{Theorem~\ref{#1}}
\newcommand{\refTs}[1]{Theorems~\ref{#1}}
\newcommand{\refC}[1]{Corollary~\ref{#1}}
\newcommand{\refCs}[1]{Corollaries~\ref{#1}}
\newcommand{\refL}[1]{Lemma~\ref{#1}}
\newcommand{\refLs}[1]{Lemmas~\ref{#1}}
\newcommand{\refR}[1]{Remark~\ref{#1}}
\newcommand{\refS}[1]{Section~\ref{#1}}
\newcommand{\refSs}[1]{Sections~\ref{#1}}
\newcommand{\refE}[1]{Example~\ref{#1}}
\newcommand\REM[1]{{\raggedright\texttt{[#1]}\par\marginal{XXX}}}
\xdef\klockan{\the\count1.0\the\count255}
\xdef\klockan{\the\count1.\the\count255}\fi
\newcommand{\sumin}{\sum_{i=1}^n}
\newcommand\set[1]{\ensuremath{\{#1\}}}
\newcommand\xpar[1]{(#1)}
\newcommand\bigpar[1]{\bigl(#1\bigr)}
\newcommand\Bigpar[1]{\Bigl(#1\Bigr)}
\newcommand\biggpar[1]{\biggl(#1\biggr)}
\newcommand\lrpar[1]{\left(#1\right)}
\newcommand\bigsqpar[1]{\bigl[#1\bigr]}
\newcommand\xcpar[1]{\{#1\}}
\newcommand\bigabs[1]{\bigl\lvert#1\bigr\rvert}
\newcommand\Bigabs[1]{\Bigl\lvert#1\Bigr\rvert}
\def\rompar(#1){\textup(#1\textup)}    
\newcommand\parfrac[2]{\lrpar{\frac{#1}{#2}}}
\newcommand\Bigparfrac[2]{\Bigpar{\frac{#1}{#2}}}
\def\xexp(#1){e^{#1}}
\newcommand\floor[1]{\lfloor#1\rfloor}
\newcommand\setn{\set{1,\dots,n}}
\newcommand\ntoo{\ensuremath{{n\to\infty}}}
\newcommand\norm[1]{\lVert#1\rVert}
\newcommand\Bignorm[1]{\Bigl\lVert#1\Bigr\rVert}
\newcommand\punkt{.\spacefactor=1000}    
\newcommand\ie{i.e\punkt}
\newcommand\eg{e.g\punkt}
\newcommand\cf{cf\punkt}
\newcommand{\as}{a.s\punkt}
\newcommand\whp{w.h.p\punkt}
\newcommand\whpx{w.h.p}
\newcommand{\tend}{\longrightarrow}
\newcommand\dto{\overset{\mathrm{d}}{\tend}}
\newcommand\pto{\overset{\mathrm{p}}{\tend}}
\newcommand\eqd{\overset{\mathrm{d}}{=}}
\newcommand\op{o_{\mathrm p}}
\newcommand\Op{O_{\mathrm p}}
\newcounter{CC}
\newcounter{cc}
\newcommand\E{\operatorname{\mathbb E{}}}
\renewcommand\P{\operatorname{\mathbb P{}}}
\newcommand\Var{\operatorname{Var}}
\newcommand\Cov{\operatorname{Cov}}
\newcommand\Po{\operatorname{Po}}
\newcommand\ga{\alpha}
\newcommand\gb{\beta}
\newcommand\gd{\delta}
\newcommand\gD{\Delta}
\newcommand\gam{\gamma}
\newcommand\gG{\Gamma}
\newcommand\gl{\lambda}
\newcommand\gss{\sigma^2}
\newcommand\eps{\varepsilon}
\newcommand\cA{\mathcal A}
\newcommand\cB{\mathcal B}
\newcommand\cC{\mathcal C}
\newcommand\cE{\mathcal E}
\newcommand\cG{\mathcal G}
\newcommand\cH{\mathcal H}
\newcommand\cM{\mathcal M}
\newcommand\cP{\mathcal P}
\newcommand\cR{{\mathcal R}}
\newcommand\cS{{\mathcal S}}
\newcommand\indic[1]{\boldsymbol1\xcpar{#1}}
\newcommand\qq{^{1/2}}
\newcommand\qqw{^{-1/2}}
\newcommand\oi{\ensuremath{[0,1]}}
\newcommand\dtv{d_{\mathrm{TV}}}
\newcommand\lhs{left-hand side}
\newcommand\rhs{right-hand side}
\newenvironment{Aenumerate}[1][-10pt]{
\addtolength{\leftmargini}{#1}\begin{enumerate}

 }{\end{enumerate}}
\newcommand\ddn{\ensuremath{\mathbf{d}_n}}
\newcommand\dd{\ensuremath{\mathbf{d}}}
\newcommand\ddnx{\ensuremath{(d_i)_1^n}}
\newcommand\gndd{\ensuremath{G(n,\dd)}}
\newcommand\ggndd{\ensuremath{G^*(n,\dd)}}
\newcommand\hgndd{\ensuremath{\widehat{G}(n,\dd)}}
\newcommand\hgindd{\ensuremath{\widehat{G}_1(n,\dd)}}
\newcommand\hgiindd{\ensuremath{\widehat{G}_2(n,\dd)}}
\newcommand\xG{G^*}
\newcommand\hG{\widehat G}
\newcommand\ZZ{Z^*}
\newcommand\sC{\mathsf C}
\newcommand\sP{\mathsf P}
\newcommand\sPa{\sP_1}
\newcommand\sPb{\sP_2}
\newcommand\sPc{\sP_3}
\newcommand\sPd{\sP_4}
\newcommand\Xb{X_2}
\newcommand\Xc{X_3}
\newcommand\Yb{Y_{\sP_2}}
\newcommand\Yc{Y_{\sP_3}}
\newcommand\dmax{d_{\text{\rm max}}}
  \newcommand\hX{\widehat X}
\newcommand\gax{\tau}
\newcommand\GGn{{\mathfrak G}_n}
\newcommand\CM{configuration model}
\newcommand\SCM{switched configuration model}
\newcommand\hg{\SCM}
\newcommand\xX{X^*}
\newcommand\maxdi{\max_i d_i}
\newcommand\setG{\set{G}}
\newcommand\normm[1]{\norm{#1}_{\cM(\GGn)}}
\newcommand\comp{^{\textsf c}}
\newcommand\bcS{\cS\comp}
\newcommand\bcG{\cG\comp}
\newcommand\pc{p\comp}
\newcommand\medge{$m$-edge}
\newcommand\xedge[1]{$#1$-edge}
\newcommand\lm{_{\ell,m}}
\newcommand\umu{\gl}
\newcommand\hmu{\widehat\umu}
\newcommand\hmugc{\widehat\umu_{\cG}\comp}
\newcommand\hmuss{\widehat\umu_{\cS,s}}
\newcommand\hmugs{\widehat\umu_{\cG,s}}
\newcommand\zetass{\zeta_{\cS,s}}
\newcommand\zetags{\zeta_{\cG,s}}
\newcommand\refPG{\ref{P23}--\ref{Pgap2}}
\newcommand\refPP{\ref{P23}--\ref{Plast}}
\newcommand\Gs{_{\cG,s}}
\newcommand\Ss{_{\cS,s}}
\newcommand\Zgs{Z\Gs}
\newcommand\Zss{Z\Ss}
\newcommand\Iiajb{I_{i,\ga,j,\gb}}
\newcommand\XXc{\widehat{X}_3}
\newcommand\CS{Cauchy--Schwarz}
\newcommand\CSineq{\CS{} inequality}
\begin{document}

%
%
%

\begin{abstract}
  Random graphs with a given degree sequence are often constructed using the
  configuration model, which yields a random multigraph.
  We may adjust this multigraph by a sequence of switchings, eventually
  yielding a simple graph. We show that, assuming essentially a bounded
  second moment of the degree distribution, this construction with the
  simplest types of switchings yields a simple random graph with an almost
  uniform distribution, in the sense that the total variation distance is
  $o(1)$.  This construction can be used to transfer results on
  distributional convergence from the configuration model multigraph to the
  uniform random simple graph with the given vertex degrees.
  As examples, we give a few applications to asymptotic normality.
  We show also a weaker result yielding contiguity when the maximum degree
  is too large for the main theorem to hold.
\end{abstract}

\maketitle

\section{Introduction}\label{S:intro}

We consider random graphs with
vertex set $[n]:=\setn$ and a given degree
sequence $\dd=\bigpar{d_1,\dots,d_n}$.
In particular, we define
$\gndd$ to be the random (simple) graph with degree sequence $\dd$
chosen uniformly at random among all such graphs.
We will consider asymptotic results as \ntoo, where the degree sequence
$\dd=\ddn=(d_i^{(n)})_1^n$ depends on $n$,
but usually we omit $n$ from the notation.

The standard methods to constuct a random graph
with a given degree sequence
begin with the \emph{configuration model},
which was introduced by \citet{Bollobas-config}.
(See \cite{BenderCanfield,Wormald81} for related models and arguments.)
As is well-known, this method yields a random multigraph, which we
denote by $\ggndd$, with the given degree sequence $\dd$;
see \refS{Sconfig}.
This random multigraph may contain loops and multiple edges; however,
in the present paper (as in  many others), we will
consider asymptotic results as \ntoo, where
$\dd=\ddn$  satisfies
(at least)
\begin{align}\label{D2}
  \sumin d_i = \Theta(n),
  &&& \sumin d_i^2 = O(n),
\end{align}
and then (see \eg{} the proof of \refL{LSG})
the expected number of loops and multiple edges is $O(1)$, which
might seem
insignificant when $n$ is large.
(Recall that $\Theta(n)$ means a number in the interval $[cn,Cn]$ for some
constants $c,C>0$.) 

In fact, we are mainly interested in the more regular case where, as \ntoo,
\begin{align}\label{D2lim}
  \frac{1}{n}\sumin d_i\to\mu,&&&
                          \frac{1}{n}\sumin d_i^2\to\mu_2
\end{align}
for some $\mu,\mu_2\in(0,\infty)$.
Obviously, \eqref{D2lim} implies \eqref{D2}. Conversely, if \eqref{D2}
holds, then there is always a subsequence satisfying \eqref{D2lim}.
It follows, see \refS{Ssubsub} for details,
that for our purposes \eqref{D2} and \eqref{D2lim} are essentially equivalent.
We will thus use the more general \eqref{D2} in the theorems.

In some applications, the random multigraph $\ggndd$ may be at least as
good as the simple graph $\gndd$. For example, this may be the case in
an application 
where the random graph is intended to be an
approximation of an unknown graph ``in real life'';
then the multigraph model may be just as good as an approximation.
On the other hand,
if we, as is often the case, really want a random simple graph (\ie, no
loops or multiple edges), then there
are several ways to proceed.

The standard method, at least in the sparse case studied in the present paper,
is to condition $\ggndd$  on the event that it is a
simple graph; it is a fundamental fact of the configuration model
construction (implicit in \cite{Bollobas-config})
that this yields a random simple
graph  $\gndd$ with the uniform distribution over all graphs with the given
degree sequence. This method has been very successful in many cases.
In particular, under the condition \eqref{D2} on $\dd$,
\begin{equation}
  \label{liminf}
  \liminf_\ntoo \P\bigpar{\ggndd \text{ is simple}}>0,
\end{equation}
see \eg{} \cite{SJ195,SJ281}, and then any result on convergence in
probability for $\ggndd$ immediately transfers to $\gndd$.
(See also \citet{BollobasRiordan-old}, where this method is used,
with more complicated arguments, also in cases with
$\P\bigpar{\ggndd\text{ is simple}}\to0$.)
However, as is also well-known, results on convergence in distribution do
not transfer so easily, and further arguments are needed.
(See
\cite{SJ196}, 
\cite{Riordan-phase}, 
\cite{SJ338} for examples where this has succeded, with more or less
complicated extra arguments.)

Another method to create a simple graph from $\ggndd$ is to erase all
loops and merge any set of parallel edges into a single edge. This creates a
simple random graph, but typically its degree sequence is not exactly the
given sequence $\dd$.
Nevertheless, this \emph{erased \CM}
may be as useful as $\ggndd$ in some applications.
This construction is studied in \citet{BrittonDeijfenML}
and  \citet[Section 3]{Hofstad},
but will not be considered further in the present paper where we insist on
the degree sequence being exactly $\dd$.

In the present paper, we consider a different method, where we also
adjust $\ggndd$ to
make it simple, but this time we keep the degree sequence $\dd$ exact by using
switchings instead of erasing.
More precisely, we process the loops and multiple edges in $\ggndd$
one by one. For each
such bad edge, we chose another edge, uniformly at random,
and switch the endpoints of these two edges, thus replacing them by another
pair of edges. See \refS{Shg} for details.
Assuming \eqref{D2}, this
typically gives a simple graph after a single pass through the bad
edges (\refT{Tnotbad}); if not, we repeat until a simple graph is obtained.
We denote the resulting graph by $\hgndd$ and call it the
\emph{\hg}.
The idea to use switchings in this context goes back to \citet{McKay}
for the closely related problem of counting simple graphs with a given
degree sequence
(assuming $\max d_i = O(n^{1/4})$, but not \eqref{D2}),
and was made explicit for generating $\gndd$
by \citet{McKayWormald1990} 
(using somewhat different switchings).
See the survey by \citet{Wormald99} for further uses of switchings.
Recent refinements of the method, extending it to larger classes of degree
sequences by employing more types of switchings,
are given in
\citet{GaoW2016,GaoW2017,GaoW2018}.
We will not use these recent refinements (that have been developed to handle
also rather dense graphs); instead we focus on the simple case when
\eqref{D2} holds and only a few switchings are needed; we also use only the
oldest and simplest types of switchings, used already by \citet{McKay}
(called \emph{simple switchings} in \cite{Wormald99}).
Although the switching method for this case has been known for a long time,
it seems to have been somewhat neglected.
Our purpose is to show that the switching method is powerful also in this
case, and that it complements the conditioning method discussed above for
the purpose of proving asymptotic results for $\gndd$.

\begin{remark}\label{Rcondition}
  From the point of view of constructing a random simple graph
  with given degree sequence by simulation, the standard approach using
  conditioning means that we sample the multigraph $\ggndd$; if it
    happens to be simple, we accept it, and otherwise we discard it
    completely and start again, repeating until a simple graph is found.
(See \eg{} \cite{Wormald1984}.)    
The approach in the present paper is instead to keep most of the multigraph
even when it is not simple, and resample only a few edges.
The disadvantage is that the result $\hgndd$
is not perfectly uniformly random, but \refT{T1} below shows that is
a good approximation, and asymptotically correct.
The advantage is that $\hgndd$ typically does not differ much from
$\ggndd$, and thus we often can show estimates of the type \eqref{diff0} in
\refC{C2} below.
\end{remark}

\begin{remark}
  In \eg{} \cite{McKayWormald1990,GaoW2017,GaoW2018}, an exactly uniformly
  distributed simple graph (\ie, $\gndd$) is constructed by combining
  switchings with rejection sampling, meaning that we may,
with some carefully calculated probabilities,
  abort the construction  and restart. 
  (Cf.\ the conditioning method where, as discussed in \refR{Rcondition}, we
  restart as soon as anything is wrong, instead of trying to fix it by
  switchings.) 
  Our focus is not on actual concrete construction of instances of $\gndd$
  by   simulation,
but rather to have a method of construction that can be used theoretically
to study properties  of $\gndd$,
  and for our purposes the approximate
  uniformity given by \refT{T1} is good enough. (And better, since the
  method is simpler.)
\end{remark}

\begin{remark}
  Switchings have also recently been used 
  (in a different way)
by \citet{AthreyaY}  to prove
  asymptotic normality for statistics of $\ggndd$ (in a subcritical case)
  using martingale methods.
\end{remark}

\begin{remark}
  For convenience, we state the results for a sequence $\ddn$ of degree
  sequences where 
  $\ddn$ has length (number of vertices) $n$.
More generally, one might consider a subsequence, or other sequences of
degree sequences $\dd_j$ with lengths $n_j\to\infty$.
This will be used in the proofs, see \refS{Ssubsub}.
\end{remark}

The main results are stated in \refS{Smain}, and proved in
\refSs{Spf0}--\ref{SpfT2}.
A few applications are given in \refS{Sapp}.

\section{Notation and main results}\label{Smain}
\subsection{Some notation}\label{SSnot}

Unspecified limits are as \ntoo;
\whp{} (with high probability) means with probability tending to 1 as \ntoo.
$\dto$ and $\pto$ denote convergence in distribution and probability,
respectively.

If $X_n$ are random variables and $a_n$ are positive numbers, then
$X_n=\Op(a_n)$ means $\lim_{K\to\infty}\sup_n \P(|X_n|>Ka_n)=0$,
and $X_n=\op(a_n)$ means $\sup_n \P(|X_n|>\eps a_n)=0$ for every $\eps>0$;
thus $X_n=\op(a_n)\iff X_n/a_n\pto0$.

Given a degree sequence $\dd=\ddnx$, we let
\begin{align}\label{dmax}
  \dmax&:=\max_{1\le i\le n} d_i,
  \\
  \label{N}
  N&:=\sumin d_i.
\end{align}
Thus a graph with degree sequence $\dd$ has $n$ vertices and $N/2$ edges.
Note that \eqref{D2} implies $N=\Theta(n)$ and $\dmax=O(n\qq)$.

If $\cS$ 
is a measurable space, then $\cM(\cS)$ is the Banach
space of finite signed measures on $\cS$, and $\cP(\cS)$ is the subset of
probability measures.
If $\umu,\nu\in\cP(\cS)$, then
their \emph{total variation distance} is defined by
\begin{align}\label{dtv1}
  \dtv(\umu,\nu):=\sup_{A\subseteq\cS} \bigabs{\umu(A)-\nu(A)}
  =\tfrac12\norm{\umu-\nu}_{\cM(\cS)}
\end{align}
(where we tacitly only consider measurable $A$).  
If $X$ and $Y$ are random elements of $\cS$ with distributions $\umu$ and
$\nu$, we also write
\begin{align}\label{dtv2}
  \dtv(X,Y):=\dtv(\umu,\nu)
  =
  \sup_{A\subseteq\cS} \bigabs{\P(X\in A)-\P(Y\in A)}.
\end{align}
If $\cS$ is \eg{} a separable metric space
(for example, as in our applications, a discrete finite or
countable set), then
\begin{align}\label{dtv3}
  \dtv(X,Y) = \min \P\bigpar{X'\neq Y'},
\end{align}
taking the minimum over all \emph{couplings} $(X',Y')$ of $X$ and $Y$, \ie,
pairs of random variables $X',Y'$ (defined on the same probaility space)
such that $X'\eqd X$ and $Y'\eqd Y$.
(See \eg{} \cite[Appendix A.1]{SJI} or \cite[Section 4]{SJ212}.)

If $\cS_n$, $n\ge1$,
is a sequence of measurable spaces, and $X_n$ and $Y_n$ are
random variables with values in $\cS_n$, then $X_n$ and $Y_n$ are
\emph{contiguous}
if for any sequence of measurable sets (events) $\cE_n\subseteq\cS_n$,
\begin{align}\label{contig}
  \P(X_n\in \cE_n)\to0
  \iff
  \P(Y_n\in \cE_n)\to0  .
\end{align}
See \eg{} \cite[Section 9.6]{JLR} and \cite{SJ212}.

If $G$ is a (multi)graph, we let 
$E(G)$ denote its edge set
and $e(G):=|E(G)|$ its number of edges (counted with multiplicity).

$\sP_k$ denotes a path with $k$ edges and $k+1$ vertices,
and $\sC_k$ a cycle with $k$
vertices, $k\ge1$. In particular, $\sC_1$ is a loop, and $\sC_2$ is a pair
of parallel edges.
We denote the disjoint union of (unlabelled) graphs by $+$,
and write \eg{}
$2\sPb$ for $\sPb+\sPb$.

$C$ and $c$ denote positive constants that may be different at each occurrence.
(They typically depend on the sequence of degree sequences, but they do not
depend on $n$.)

\subsection{Main results}

$\hgndd$ is, by construction, a random simple graph with the given
degree sequence $\dd$.
However, it does not have a uniform distribution over all such
graphs, \ie, it will not be equal to the desired random graph $\gndd$; see
\refE{Edifferent}.
Nevertheless, our main result is the following theorem, which says in a
strong form that $\hgndd$ has asymptotically the same distribution as
$\gndd$; in the notation of \cite{SJ212},
$\hgndd$ and $\gndd$ are \emph{asymptotically equivalent}.
Hence, $\hgndd$ is a useful approximation of $\gndd$, and as stated
formally in \refC{C1} below,
results on both convergence in probability and convergence in distribution
that can be proved for $\hgndd$ transfer to $\gndd$.
Proofs are given in \refS{SpfT1}.

\begin{theorem}\label{T1}
  Assume that\/
  $\dd=(d_i^{(n)})_1^n$ depends on $n$ and satisfies the conditions
  \eqref{D2} and
  \begin{align}\label{dmaxo}
     \dmax=o\bigpar{n\qq}.
  \end{align}
  Then, as \ntoo,
  \begin{align}\label{t1a}
    \dtv\bigpar{\hgndd,\gndd}\to0.
  \end{align}
  In other words, there exists a coupling of $\hgndd$ and $\gndd$ such that
  \begin{align}\label{t1b}
    \P\bigpar{\hgndd\neq\gndd}\to0.
  \end{align}
\end{theorem}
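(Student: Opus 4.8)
The plan is to estimate, for every simple graph $G$ with degree sequence $\dd$, the probability $\P(\hgndd=G)$ and compare it with $\P(\gndd=G)=1/S$, where $S$ is the number of such graphs. By the subsequence argument of \refS{Ssubsub} it suffices to treat the case \eqref{D2lim}; and if $\mu_2=\mu$ then the expected number of loops and multiple edges of $\ggndd$ tends to $0$, so $\hgndd=\ggndd$ \whp{} and $\dtv(\ggndd,\gndd)\to0$, whence \eqref{t1a} is immediate. So assume $\mu_2>\mu$; then $q:=\P(\ggndd\text{ not simple})$ is bounded away from $0$, and from $1$ by \eqref{liminf}. The standard count of configurations (\refS{Sconfig}) gives $\P(\ggndd=G)=\pi$ for every simple $G$ as above, where $\pi:=\prod_id_i!/(N-1)!!=(1-q)/S$. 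Since the switching procedure changes nothing precisely when $\ggndd$ is already simple,
\begin{align*}
  \P(\hgndd=G)=\pi+R_G,\qquad R_G:=\P\bigpar{\hgndd=G,\ \ggndd\text{ not simple}}\ge0,
\end{align*}
and, as $\hgndd$ is always simple, $\sum_GR_G=q$. Hence
\begin{align*}
  2\dtv(\hgndd,\gndd)=\sum_G\bigabs{\P(\hgndd=G)-1/S}=\sum_G\bigabs{R_G-q/S},
\end{align*}
and it remains to show $R_G=(1+o(1))q/S$ for all but a vanishing fraction of the $G$.

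To analyse $R_G$ I would condition on $\ggndd=G_0$. By the estimates behind \eqref{D2} and, crucially, \eqref{dmaxo}, the probability that $\ggndd$ has any defect beyond single loops at distinct vertices and pairwise disjoint double edges disjoint from the loops (\eg{} a triple edge, a double loop, or a loop meeting a double edge) is $O(\dmax^2/n)=o(1)$; and by \refT{Tnotbad} a single pass \whp{} already produces a simple graph. Discarding this $o(1)$-probability event costs only $o(1)$ in $\dtv$, so we may restrict to \emph{clean} multigraphs $G_0$ -- a simple graph with $a$ loops at distinct vertices and $b$ pairwise disjoint double edges disjoint from the loops (call this type $(a,b)$) -- whose $a+b$ one-pass switchings moreover do not interfere. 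For these, $\P(\ggndd=G_0)=\pi\,2^{-(a+b)}$ (each loop and each double edge contributes a factor $2$ to the number of configurations realising $G_0$). The crux is the estimate
\begin{align*}
  \P\bigpar{\hgndd=G\mid\ggndd=G_0}=\rho_{a,b}\bigpar{1+o(1)},
\end{align*}
uniformly over clean $G_0$ of type $(a,b)$ and over all simple $G$ reachable from $G_0$, with $\rho_{a,b}$ depending only on $a$, $b$, $N$ and of order $N^{-(a+b)}$. Here \eqref{dmaxo} enters once more: when a bad edge at a vertex $v$ is switched with a uniformly random edge $xy$, the chance that $vx$ or $vy$ is already present, or that two switchings of the pass interfere, is $O(d_v\dmax/N)=O(\dmax^2/N)=o(1)$, and on the complement each bad edge leaves an identifiable footprint of new edges -- incident to its endpoints -- in $E(G)\setminus E(G_0)$, so that there is a unique switching sequence $G_0\to G$, of probability $\rho_{a,b}(1+o(1))$.

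Combining these (the contribution of types with many bad edges being negligible by Poisson tails), up to the discarded $o(1)$,
\begin{align*}
  R_G=\pi\sum_{(a,b)\neq(0,0)}\rho_{a,b}\,2^{-(a+b)}M_{a,b}(G)\bigpar{1+o(1)},
\end{align*}
where $M_{a,b}(G)$ counts clean multigraphs of type $(a,b)$ reachable to $G$ -- equivalently, choices in $G$ of $a$ vertex-disjoint ``loop sites'' (paths $x\,v\,y$ with $xy\notin E(G)$) and $b$ ``double-edge sites'' (paths $x\,u\,v\,y$ with $xy\notin E(G)$), mutually disjoint -- so that $M_{a,b}(G)=M_{1,0}(G)^aM_{0,1}(G)^b/(a!\,b!)$ up to lower-order disjointness corrections. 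Since $M_{1,0}(G)=\sum_v\binom{d_v}2+O(\#\{\text{triangles of }G\})$ and $M_{0,1}(G)=\sum_{uv\in E(G)}d_ud_v-\sum_id_i^2+N/2+O(\#\{\text{triangles and }4\text{-cycles of }G\})$, and the numbers of triangles and $4$-cycles of $\gndd$ are $\Op(1)$ while $\sum_v\binom{d_v}2$ is deterministic, the only genuine dependence on $G$ sits in $\sum_{uv\in E(G)}d_ud_v$. The key analytic input is the concentration of this quantity for $G=\gndd$, with variance $o(n^2)$: a routine second-moment computation gives variance $O(\dmax^2n)=o(n^2)$ for $\ggndd$, which transfers to $\gndd$ via \eqref{liminf} (using also that these counts are asymptotically independent of the loop- and double-edge counts). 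As $\E M_{a,b}(\gndd)=\Theta(n^{a+b})$ (here $\mu_2>\mu$ is used), Chebyshev's inequality with a threshold $\eps=\eps(n)\to0$ shows that all but a fraction $O(1/(\eps^2n))$ of the $G$ have $M_{a,b}(G)=(1+O(\eps))\E M_{a,b}(\gndd)$ for all relevant $(a,b)$ at once; for such $G$, $R_G=(1+o(1))\bar R$ with $\bar R$ independent of $G$, and $\sum_GR_G=q$ then forces $\bar R=(1+o(1))q/S$. For the exceptional $G$ the crude bound $M_{a,b}(G)=O((\dmax n)^{a+b})$ together with $\dmax=o(n\qq)$ makes their total contribution to $\sum_GR_G$ equal to $o(1)$. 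Hence $\sum_G\bigabs{R_G-q/S}=o(1)$, which is \eqref{t1a}; \eqref{t1b} follows from \eqref{dtv3}.

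The main obstacle I anticipate is the uniform conditional estimate for $\P(\hgndd=G\mid\ggndd=G_0)$: one must describe exactly how one pass of fixed-order switchings acts on a clean multigraph, check that a prescribed reachable $G$ arises from a single switching sequence, and bound every interference probability by $O(\dmax^2/N)$. That these errors be $o(1)$ rather than $O(1)$ is precisely why the hypothesis is $\dmax=o(n\qq)$ and not just the $\dmax=O(n\qq)$ already implied by \eqref{D2}; the same hypothesis controls the $o(1)$ probability of the non-clean defects and yields the $o(n^2)$ variance. The remaining second-moment estimates are routine but must be carried out for each type.
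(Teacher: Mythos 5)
Your proposal follows essentially the same architecture as the paper's proof: restrict to the event that $\ggndd$ has only vertex-disjoint single loops and double edges and that the single pass of switchings does not interfere (the paper's \emph{golden} constructions, \refL{LSG}); on that event identify the conditional law of $\hgndd$ as proportional to a weighted count of vertex-disjoint paths $\sPb,\sPc$ in $G$ whose endpoint pairs are non-edges (your $M_{a,b}(G)$ is, up to the weight $2^{-b}$ bookkeeping, the paper's $\zeta_{a,b}(G)$ of Claims \ref{ClgG}--\ref{ClG} and \eqref{murg}); then prove concentration of these counts for $G=\gndd$ by computing in $\ggndd$ and transferring through \eqref{liminf}; finally truncate the number of bad edges by a Markov/Poisson tail. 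Your variance computation for $\sum_{uv\in E}d_ud_v$ is exactly the paper's bound $\Var \widehat X_3=O(n\dmax^2)+O(n)=o(n^2)$ in \eqref{XXc2}, and your normalization via $\sum_G R_G=q$ plays the role of the paper's normalization by $\E Z_{\cG,s}$ in \eqref{murg}--\eqref{bob}.

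The genuine gap is your treatment of the exceptional graphs. Chebyshev with variance $o(n^2)$ at threshold $\eps n$ gives an exceptional fraction $o(1)/\eps^2$, not the $O(1/(\eps^2 n))$ you assert; the latter would require variance $O(n)$, which is not available since the bound is only $O(\dmax^2 n)$ and $\dmax$ may be nearly $n^{1/2}$. With an exceptional fraction that is merely $o(1)$, your crude bound $M_{a,b}(G)=O\bigpar{n^a(\dmax n)^b}$, i.e.\ $\rho_{a,b}M_{a,b}(G)=O(\dmax^{b})$, does not make the exceptional contribution to $\sum_G R_G$ vanish: already for $b=1$ the product of an $o(1)$ fraction with a quantity as large as $\dmax$ need not be $o(1)$, and for $b\ge2$ (which must be included to capture all but $o(1)$ of the non-simple event when $\nu>0$) no argument based only on first moments and the $o(n^2)$ variance closes the estimate. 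What is needed is uniform integrability of $n^{-(a+b)}M_{a,b}(\gndd)$, i.e.\ second (and higher) moment bounds for the path-system counts; this is precisely the paper's \refL{LA0} ($\E[X_2^\ell X_3^m]=O(n^{\ell+m})$, proved by an induction handling coincident half-edge pairs), which yields the $L^1$-convergence of \refL{LA3} and hence the total-variation bound \eqref{bob} with no exceptional-set surgery at all. A secondary point: your claimed uniform $(1+o(1))$ estimate for $\P(\hgndd=G\mid\ggndd=G_0)$ over all reachable $G$ silently includes interfering trajectories that you never bound target-by-target; since you have already discarded the interference event at $o(1)$ total-variation cost, you should instead work exactly on the restricted event, where the history from $G_0$ to $G$ is unique and its probability is an exact function of $(a,b,N)$, as the paper does.
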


\begin{corollary}\label{C1}
  Assume that\/ $\dd$ satisfies
    \eqref{D2} and \eqref{dmaxo}.
  Suppose that\/ $X_n=f_n\bigpar{\gndd}$ for some function $f_n$ of labelled
  simple graphs, and let
  $\hX_n=f_n\bigpar{\hgndd}$.
 If\/ $\gax$ is a constant such that $\hX_n\pto \gax$ as \ntoo, then also
 $X_n\pto \gax$. 
 More generally,
 if\/ $Y$ is a random variable such that $\hX_n\dto Y$ as \ntoo,
  then also $X_n\dto Y$.
\end{corollary}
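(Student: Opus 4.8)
The plan is to derive the corollary from \refT{T1} by the standard coupling argument: vanishing total variation distance transfers both convergence in probability and convergence in distribution. First I would, for each $n$, take a coupling of $\hgndd$ and $\gndd$ as provided by \eqref{t1b}, and realise $\hX_n=f_n(\hgndd)$ and $X_n=f_n(\gndd)$ on that common probability space. Since the two are obtained by applying the \emph{same} map $f_n$ to the two coupled copies, they coincide whenever the graphs do, so that
\begin{align*}
  \P\bigpar{X_n\neq\hX_n}\le\P\bigpar{\hgndd\neq\gndd}\to0 .
\end{align*}
In particular $\dtv(X_n,\hX_n)\to0$, and $X_n-\hX_n\pto0$ on the coupled space.

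Next I would transfer each mode of convergence. For convergence in probability, assuming $\hX_n\pto\gax$, the events $\{|X_n-\gax|>\eps\}$ and $\{|\hX_n-\gax|>\eps\}$ differ only inside $\{X_n\neq\hX_n\}$, so
\begin{align*}
  \P\bigpar{|X_n-\gax|>\eps}\le\P\bigpar{X_n\neq\hX_n}+\P\bigpar{|\hX_n-\gax|>\eps}\to0
\end{align*}
for every fixed $\eps>0$; as this probability depends only on the marginal law of $X_n$, it is unaffected by the coupling, and hence $X_n\pto\gax$. For convergence in distribution, assuming $\hX_n\dto Y$, I would write $X_n=\hX_n+(X_n-\hX_n)$ with $X_n-\hX_n\pto0$ and apply Slutsky's theorem to get $X_n\dto Y$ (equivalently, $\dtv(X_n,\hX_n)\to0$ gives $\E g(X_n)-\E g(\hX_n)\to0$ for every bounded continuous $g$, which combined with $\E g(\hX_n)\to\E g(Y)$ yields the claim). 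The first assertion of the corollary is then the special case $Y=\gax$.

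I do not expect a genuine obstacle here: all the substance is in \refT{T1}, and what remains is the routine principle that random variables which agree with probability tending to $1$ — equivalently, whose total variation distance tends to $0$ — have the same limits in probability and in distribution. The only point worth being explicit about is that the functionals $f_n$ are allowed to depend on $n$; this causes no trouble, since the coupling controls the graphs directly and the \emph{same} $f_n$ is applied to both copies, so the bound $\P(X_n\neq\hX_n)\le\P(\hgndd\neq\gndd)$ holds irrespective of the form of $f_n$.
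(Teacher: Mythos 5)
Your proposal is correct and follows essentially the same route as the paper: the paper's proof also takes the coupling guaranteed by \eqref{t1b}, notes $\P(X_n\neq\hX_n)\le\P\bigpar{\hgndd\neq\gndd}\to0$, and concludes. You merely spell out in more detail the routine transfer of convergence in probability and in distribution, which the paper leaves implicit.
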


Moreover, $\hgndd$ is obtained from $\ggndd$ using only a few
switchings. Hence it is often easy to prove the estimate \eqref{diff0}
below,
and then the next corollary shows that results on convergence in
distribution for $\ggndd$ transfer to $\hgndd$, using $\hgndd$ as an
intermediary in the proof.

\begin{corollary}\label{C2}
  Assume that\/ $\dd$ satisfies
    \eqref{D2} and \eqref{dmaxo}.
    Suppose that\/ $X_n=f_n\bigpar{\gndd}$ for some function $f_n$,
which is defined  more generally for labelled
  multigraphs, and let
  $\xX_n=f_n\bigpar{\ggndd}$.
  Suppose also that
  \begin{align}
    \label{diff0}
    f_n\bigpar{\hgndd}-
    f_n\bigpar{\ggndd}\pto0.
  \end{align}
  If\/ $Y$ is a random variable such that $\xX_n\dto Y$ as \ntoo,
  then also $X_n\dto Y$.
\end{corollary}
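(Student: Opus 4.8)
The plan is to combine \refT{T1} (via \refC{C1}) with the hypothesis \eqref{diff0}, chaining together the two approximation steps: from $\ggndd$ to $\hgndd$, and from $\hgndd$ to $\gndd$. First I would apply \refC{C1} with the same function $f_n$: since $\xX_n=f_n(\ggndd)\dto Y$ by hypothesis and $\hX_n:=f_n(\hgndd)$ satisfies $\hX_n-\xX_n\pto0$ by \eqref{diff0}, Slutsky's theorem gives $\hX_n\dto Y$ as well. Here I am using that convergence in distribution to $Y$ is preserved under adding a term that converges in probability to $0$; this is the standard Cram\'er--Slutsky lemma, valid because $\hX_n$ and $\xX_n$ are real-valued (or, more generally, take values in a separable metric space, which covers the applications).

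Next, having established $\hX_n\dto Y$, I would invoke \refC{C1} directly: with $X_n=f_n(\gndd)$ and $\hX_n=f_n(\hgndd)$, the second assertion of \refC{C1} states exactly that $\hX_n\dto Y$ implies $X_n\dto Y$. This is the step where \refT{T1} does the real work, through the coupling \eqref{t1b}: on the coupling, $\hgndd=\gndd$ with probability $1-o(1)$, so $f_n(\hgndd)=f_n(\gndd)$ with probability $1-o(1)$, whence $X_n$ and $\hX_n$ differ on an event of vanishing probability and therefore have the same distributional limit. Combining the two steps yields $X_n\dto Y$, which is the claim.

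One point to be careful about: \refC{C2} assumes $f_n$ is defined on labelled multigraphs, so that $f_n(\ggndd)$ makes sense, whereas \refC{C1} only needs $f_n$ on labelled simple graphs; since $\hgndd$ and $\gndd$ are simple, restricting $f_n$ to simple graphs is harmless and the application of \refC{C1} is legitimate. I should also note that no independence or joint-distribution assumptions between $\xX_n$ and $\hX_n$ are needed: \eqref{diff0} is a statement about their difference on whatever probability space carries the \CM{} construction, and the Slutsky argument uses only that difference. The main (and essentially only) obstacle is purely bookkeeping — making sure the two corollaries are applied to the correct pair of random variables in the correct order — since all the substantive content is already packaged in \refT{T1}/\refC{C1} and in the hypothesis \eqref{diff0}; there is no new estimate to prove.
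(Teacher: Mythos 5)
Your proposal is correct and is essentially identical to the paper's proof: the paper likewise deduces $f_n\bigpar{\hgndd}\dto Y$ from \eqref{diff0} and $\xX_n\dto Y$ (the Cram\'er--Slutsky step), and then concludes by \refC{C1}. Your remarks on $f_n$ restricting to simple graphs and on no joint-distribution assumptions being needed are accurate but not required beyond what the paper states.
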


We show in \refE{EO} that the condition $\maxdi=o(n\qq)$ is needed in
\refT{T1} and its corollaries above. However, we will also show the following
weaker statement without this assumption.
The proof is given in \refS{SpfT2}.

\begin{theorem}\label{T2}
  Assume that\/
  $\dd=(d_i^{(n)})_1^n$ depends on $n$ and satisfies 
  \eqref{D2}.
  Then, as \ntoo, the random graphs $\hgndd$ and $\gndd$ are contiguous.
  In other words, any sequence of events $\cE_n$ that holds \whp{} for
  $\hgndd$ holds also \whp{} for $\gndd$, and conversely.
\end{theorem}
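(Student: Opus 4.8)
The plan is to compare the laws of $\hgndd$ and $\gndd$ through the likelihood ratio $L_n(G):=\P(\hgndd=G)/\P(\gndd=G)$, which is well defined for every simple graph $G$ with degree sequence $\dd$ since $\gndd$ is uniform, so that $\P(\gndd=G)>0$. Recall that $\gndd$ has the distribution of $\ggndd$ conditioned on being simple, and that by \eqref{liminf} --- valid under \eqref{D2} --- there is a constant $c>0$ with $\P(\ggndd\text{ is simple})\ge c$ for all large $n$. Mutual contiguity \eqref{contig} will follow from two facts: (i)~$L_n\ge c$ for all large $n$; and (ii)~the random variables $L_n\bigpar{\gndd}$, $n\ge1$, are uniformly integrable. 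Fact~(i) is essentially free, since one way to have $\hgndd=G$ is to have $\ggndd=G$ already, in which case the switching procedure does nothing; hence
\begin{equation*}
  \P(\hgndd=G)\ge\P(\ggndd=G)=\P(\ggndd\text{ is simple})\,\P(\gndd=G)\ge c\,\P(\gndd=G)
\end{equation*}
for $n$ large. Granting (i) and (ii), let $\cE_n$ be any sequence of events. If $\P(\hgndd\in\cE_n)\to0$, then $\P(\gndd\in\cE_n)\le c\qw\P(\hgndd\in\cE_n)\to0$ by~(i). If instead $\P(\gndd\in\cE_n)\to0$, then for every $K>0$ we have $\P(\hgndd\in\cE_n)=\E\bigsqpar{L_n(\gndd)\indic{\gndd\in\cE_n}}\le K\P(\gndd\in\cE_n)+\sup_m\E\bigsqpar{L_m(\gndd)\indic{L_m(\gndd)>K}}$, so that $\limsup_n\P(\hgndd\in\cE_n)\le\sup_m\E\bigsqpar{L_m(\gndd)\indic{L_m(\gndd)>K}}$, which tends to $0$ as $\ktoo$ by~(ii). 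This is exactly \eqref{contig}. (See \cite[Section 9.6]{JLR} and \cite{SJ212} for background; the above amounts to one direction of Le Cam's first lemma together with the trivial bound from~(i).)

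So the work is in (ii), and for this I would revisit the analysis of the switching process used in the proof of \refT{T1}. Write $\P(\hgndd=G)=\sum_M\P(\ggndd=M)\,q_M(G)$, the sum being over multigraphs $M$ with degree sequence $\dd$ and $q_M(G)$ the probability that the procedure started from $M$ eventually outputs $G$; here $q_M(G)=\indic{M=G}$ for simple $M$, so the term $M=G$ contributes exactly $\P(\ggndd=G)\le\P(\gndd=G)$. Stratifying the remaining terms by the number $k\ge1$ of loops and parallel edges of $M$ --- where $\sum_{M\,:\,b(M)=k}\P(\ggndd=M)=\P(B_n=k)$ with $B_n$ the number of loops and parallel edges of $\ggndd$, and $\E B_n=O(1)$ by \eqref{D2} (\cf{} the proof of \refL{LSG}) --- and tracing the switchings backwards from the target $G$, one finds that the contribution of the source multigraphs with $k$ bad edges to $L_n(G)$ is controlled by the quantities $n\qw\sum_v d_v^2=O(1)$ (one factor for each switching that removes a loop) and $\Psi_n(G):=n\qw\sum_{uv\in E(G)}d_u d_v$ (one factor for each switching that removes a pair of parallel edges); here \refT{Tnotbad} is used to dispose of the (negligible) event that more than one pass through the bad edges is needed. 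Summing over $k$, this produces a bound of the shape $L_n(G)\le F\bigpar{\Psi_n(G)}$ with $F$ a fixed function, independent of $n$. It then remains to verify the required integrability of $F\bigpar{\Psi_n(\gndd)}$, which reduces to moment bounds for $\Psi_n\bigpar{\gndd}$; these follow from a moment computation in the configuration model for $\Psi_n\bigpar{\ggndd}$ (conditioning on simplicity costs only a factor $c\qw$), and it is precisely here that \eqref{D2} --- essentially the bounded second moment $\sum_i d_i^2=O(n)$ of the degree distribution, which also gives $\dmax=O(n\qq)$ --- is what makes the estimates go through.

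The main obstacle is step (ii), and within it the combinatorial bookkeeping for the switching process: one must check that the estimates used to prove \refT{T1}, which under the extra hypothesis \eqref{dmaxo} have error terms that are $o(1)$ --- yielding $\dtv(\hgndd,\gndd)\to0$ and in particular $L_n\pto1$ --- degrade only to $O(1)$ bounds, expressed through $\Psi_n(G)$ and $n\qw\sum_v d_v^2$, once \eqref{dmaxo} is dropped, and then that these bounds carry enough integrability for the uniform integrability of $L_n\bigpar{\gndd}$. Conceptually the contrast with \refT{T1} is clean: under \eqref{dmaxo} the functional $\Psi_n\bigpar{\gndd}$ concentrates at a deterministic value, so the bias it encodes is asymptotically trivial and $L_n\pto1$, whereas under \eqref{D2} alone $\Psi_n\bigpar{\gndd}$ remains genuinely random (though tight); this is exactly why one can only obtain contiguity here, which by \refE{EO} is best possible.
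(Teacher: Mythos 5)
Your general framework is sound, and your step (i) is correct: it is the same observation the paper uses for the easy direction (on the event $S=0$ we have $\hgndd=\ggndd$, and \eqref{liminf} gives the constant). The reduction of the converse direction to uniform integrability of the likelihood ratio is also a legitimate strategy. The genuine gap is in step (ii), which is asserted exactly where all the work lies. First, a structural point: the pointwise bound $L_n(G)\le F\bigpar{\Psi_n(G)}$ cannot be claimed for the \emph{full} likelihood ratio, because the histories in which new bad edges are created (the non-silver constructions) contribute to $\P(\hgndd=G)$ in a way that is not controlled pointwise in $G$; they can only be discarded in aggregate via $\P(\bcS)=o(1)$ (\refL{LSG}\ref{LSG:S}), which means you must bound $\P\bigpar{\hgndd=G \text{ and } \cS}$ pointwise and give up the clean ``UI of $L_n(\gndd)$'' formulation. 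That repair is easy, but your sketch does not make it, and your appeal to \refT{Tnotbad} (a w.h.p.\ statement) is not compatible with a pointwise bound on $L_n$.

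Second, and more seriously, the quantitative heart is missing. If you trace one-pass (silver) histories backwards and use the explicit per-switching probability $\asymp 1/N$ as the normalization, the count of source multigraphs at distance $s$ from $G$ is of order $\sum_{\ell+m=s}X_2(G)^\ell X_3(G)^m/(\ell!\,m!)$, so summing over $s$ yields a bound of the form $C\exp\bigpar{c\,\Psi_n(G)}$; there is no additional decay in $s$ available in this accounting (your stratification by the number $k$ of bad edges of the source, with $\E B_n=O(1)$, does not enter, since the weights $\P(\ggndd=M)$ are already fully used). Hence the integrability you need is a \emph{uniform exponential-moment} bound for $\Psi_n(\gndd)$ under \eqref{D2} alone (where $\dmax$ may be of order $n^{1/2}$ and a single edge between two high-degree vertices contributes $\Theta(1)$ to $\Psi_n$), not the fixed-order ``moment computation'' you invoke; establishing this is a substantive task comparable to the hard part of the paper's argument, and nothing in your sketch addresses it. The alternative is to work per stratum, as the paper does: conditionally on $\cS_s$ the law of $\hgndd$ is $\zetass(G)\umu\setG/\E\Zss$, Cauchy--Schwarz then requires only $\E\Zss^2\le Cn^{2s}$ together with the matching lower bound $\E\Zss\ge cn^s$ (\refL{LBB}); it is precisely this lower bound that is delicate without \eqref{dmaxo}, and its proof when $\dmax\asymp n^{1/2}$ uses a separate construction of path families through the maximum-degree vertex. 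Your proposal supplies neither the exponential-moment bound nor this lower bound, so the crux of \refT{T2} remains unproved.
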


\section{The construction of $\hgndd$}\label{Sconstr}

\subsection{The configuration model}\label{Sconfig}
The 
well-known \emph{configuration model}
was introduced by \citet{Bollobas-config} to generate
a random multigraph  with a given degree sequence $\dd=\ddnx$.
($N:=\sum_i d_i$ is assumed to be even.)
The construction works by assigning a set of $d_i$ \emph{half-edges} to
each vertex $i$; this gives a total of $N$ half-edges.
A perfect matching of the half-edges is called a \emph{configuration}, and
defines a multigraph in the obvious way:
each pair of half-edges in the
matching is regarded as an edge in the multigraph.
We say that the configuration \emph{projects} to a multigraph.
We choose a configuration uniformly at random, and let $\ggndd$ be the
corresponding multigraph.

We denote the half-edges at a vertex $i$ by $i_1,\dots,i_{d_i}$.

Note that the mapping from configurations to multigraphs is not injective,
since we may permute the half-edges at each vertex without changing the
multigraph.
Nevertheless, we often informally identify a configuration and
the corresponding multigraph, and we use graph theory language for
configurations too.
In particular, a pair $\set{i_\ga,j_\gb}$ in
a configuration $\gG$ is called an edge in $\gG$, with endpoints $i$ and $j$,
and may be written $i_\ga j_\gb$;
similarly, the particular case
$\set{i_\ga,i_\gb}$ is called a loop,  two pairs (edges)
$\set{i_\ga,j_\gb}$ and $\set{i_\gam,j_\gd}$ are said to be parallel;
a configuration is simple if it has no loops or parallel edges.

\subsection{The \SCM}\label{Shg}

We construct the \hg{}
by first constructing a random configuration $\gG_0$ and the
corresponding multigraph $\ggndd$ as above.
Formally, we will do the switchings in the configuration, where all edges
are uniquely labelled;
they  induce corresponding switchings in the multigraph, and
informally we may think of the multigraph only.

We say that an edge in a configuration or multigraph
is \emph{bad} if it is a loop or if it is
parallel to another edge. If there is no bad edge in $\gG_0$
(or equivalently, in $\ggndd$),
then $\ggndd$ is simple,
and we accept it as it is.
Otherwise, we choose a bad edge in $\gG_0$,
say $i_\ga j_\gb$ (where $i=j$ in the case
of a loop), and choose another edge $k_\gam \ell_\gd$ uniformly at random
among all other edges in $\gG_0$;
we also order the two half-edges $k_\gam$ and
$\ell_\gd$ randomly. We then make a \emph{switching}, and replace the two edges
$i_\ga j_\gb$ and $k_\gam \ell_\gd$ by the new edges
$i_\ga \ell_\gd$ and $k_\gam j_\gb$.
This gives a new configuration $\gG_1$ on the same set of half-edges, and thus a
new multigraph $G_1$ that still has the same degree
sequence $\dd$. Moreover, we have removed one bad edge (in the case of
parallel edges, also another bad edge may have become good); however, it is
possible that we have created  a new bad edge (or several).
If the new configuration $\gG_1$ has no bad edge, then the corresponding
multigraph $G_1$ is simple and we stop; otherwise we pick a 
bad edge in $\gG_1$,  and repeat until we obtain a simple graph.
$\hgndd$ is defined to be the simple random graph we have when we terminate.

\begin{remark}\label{Rrule}
  The description above is somewhat incomplete, since we have not specified
  which bad edge we switch, if there is more than one.
  We assume that we have some fixed rule for this, \eg{}  the
  lexicographically  first bad edge, or a random one;
  different rules may yield somewhat different final 
 distributions, and thus formally different random graphs $\hgndd$,
see \refE{Epick},
but  our results hold for any such rule.
See \refL{Lrule}.
\end{remark}

Of course, if the degree sequence $\dd$ is not graphic, \ie,
no simple graph with this degree sequence exists, then
the switching process will never terminate. We conjecture that
if the sequence $\dd$ is graphic, then
the switching process almost surely terminates,
but we leave this as an open problem, see \refR{R4ever}.
However, we show in \refS{Spf0} the following theorem, which
is enough for our purposes; it
shows that assuming \eqref{D2}, 
the process \whp{} terminates very quickly. 

\begin{theorem}
  \label{Tnotbad}
  Assume \eqref{D2}.
  Then, during the construction of $\hgndd$,
  \whp{} no new bad edges are created and the process terminates after
  $\Op(1)$ switchings.
\end{theorem}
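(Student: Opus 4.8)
The plan is to first count bad edges in the initial multigraph $\ggndd$, then show that each individual switching is unlikely to create a new bad edge, and finally union-bound over the (few) switchings. For the first step, recall the standard configuration-model estimates: under \eqref{D2} the expected number of loops in $\gG_0$ is $\sum_i \binom{d_i}{2}\cdot\frac{1}{N-1}=O\bigpar{\sum_i d_i^2/N}=O(1)$, and the expected number of pairs of parallel edges is $O\bigpar{(\sum_i d_i^2/N)^2}=O(1)$; hence the total number $B_0$ of bad edges in $\gG_0$ satisfies $\E B_0=O(1)$, so $B_0=\Op(1)$. Each switching removes at least one bad edge, so if no switching ever creates a new bad edge, the process terminates after at most $B_0=\Op(1)$ switchings.

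For the second step I would estimate, conditionally on the current configuration $\gG_t$ (with its at most $N/2$ edges), the probability that a single switching creates a new bad edge. We pick a bad edge $i_\ga j_\gb$ and a uniformly random other edge $k_\gam\ell_\gd$ with a random ordering of its half-edges, producing the new edges $i_\ga\ell_\gd$ and $k_\gam j_\gb$. A new bad edge can appear only if one of these two new edges is a loop (so $i=\ell$ or $k=j$) or is parallel to some edge already present (at $i$, $j$, $k$, or $\ell$). The number of edges incident to the vertices $i,j$ is at most $d_i+d_j\le 2\dmax=O(n\qq)$; the choice of $k_\gam\ell_\gd$ is uniform over $\ge N/2-1=\Theta(n)$ edges. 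So the probability that $\ell$ (or $k$) lands on a particular ``dangerous'' vertex, or that the new edge duplicates one of the $O(\dmax)$ edges at $i$ or $j$, is $O\bigpar{\dmax/N}=O\bigpar{n\qqw}$ for each switching. (Here one uses that, since the edge $k_\gam\ell_\gd$ is chosen uniformly among all edges, the endpoint $\ell$ is vertex $v$ with probability $\le d_v/(N-2)$, and $\sum_v d_v\cdot\indic{v\in\{i,j\}\text{ or adjacent}}$ is bounded by a constant times $\dmax$ plus the degrees of $i,j$.) Summing this over the $\Op(1)$ switchings and using that $\Op(1)\cdot O(n\qqw)=\op(1)$, the probability that any switching creates a new bad edge tends to $0$.

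The slightly delicate point — and the one I expect to be the main obstacle — is making the conditioning in the second step rigorous: the edge we switch and the configuration $\gG_t$ at step $t$ are themselves random and depend on the history, so one must set up the argument as a stopping-time / union-bound over an event like $\{B_0\le K\}$, and within that event control \emph{all} $\le K$ switchings simultaneously, for $K=K(\eps)$ chosen large enough that $\P(B_0>K)<\eps$. Concretely: fix $\eps>0$, pick $K$ with $\P(B_0>K)<\eps$ by the first moment bound; then on $\{B_0\le K\}$ the process does at most $K$ switchings provided none creates a bad edge, and by the per-switching bound $O(n\qqw)$ and a union bound over the (at most $K$) switchings, the probability of creating a bad edge is $\le K\cdot O(n\qqw)=o(1)$. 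Hence $\P(\text{some new bad edge created})\le\eps+o(1)$, and since $\eps$ is arbitrary this probability is $o(1)$; on the complementary event the process terminates after $\le B_0=\Op(1)$ switchings. One should also note that this argument only uses \eqref{D2}, not the stronger \eqref{dmaxo} — the bound $O(n\qqw)$ per switching uses $\dmax=O(n\qq)$, which \eqref{D2} already gives — consistent with the hypothesis of the theorem. A minor bookkeeping subtlety is that when the bad edge is one of a pair of parallel edges, the switching may fix two bad edges at once and may also, in degenerate small cases, involve coinciding vertices among $i,j,k,\ell$; these coincidences only help (they make fewer independent ways to create badness) and are in any case absorbed into the $O(\dmax/N)$ estimate, so they do not affect the conclusion.
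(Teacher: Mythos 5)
Your overall strategy is the same as the paper's (which proves this via \refL{LSG}\ref{LSG:S}): bound $\E(L+M)=O(1)$ by the first-moment calculations \eqref{EL}--\eqref{EM}, restrict to an event $\{L+M\le K\}$, show each of the at most $K$ switchings creates a new bad edge with probability $o(1)$, union bound, and let $K\to\infty$. The accounting of the number of switchings and the handling of the history-dependence via the fixed-$K$ event are fine.

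However, there is a genuine error in your key per-switching estimate. You claim that the probability of creating a new bad edge in one switching is $O(\dmax/N)=O(n\qqw)$, justified by the assertion that $\sum_v d_v\indic{v\in\{i,j\}\text{ or adjacent to }i,j}$ is bounded by a constant times $\dmax$ plus $d_i+d_j$. That assertion is false under \eqref{D2} alone: the new edge $i_\ga\ell_\gd$ is parallel to an existing edge precisely when $\ell$ is a current neighbour of $i$, and the probability of hitting a given neighbour $v$ is of order $d_v/N$, so the relevant quantity is the \emph{total degree of the neighbourhoods} $N(i)\cup N(j)$, not their cardinality. These neighbourhoods can contain up to $2\dmax=\Theta(n\qq)$ vertices whose degrees need not be small; for instance with $\dmax\sim n\qq$ and the bad vertex having $\sim n\qq$ neighbours each of degree $\sim n^{1/4}$ (which is compatible with $\sum_i d_i^2=O(n)$), the sum of their degrees is of order $n^{3/4}$, far larger than $O(\dmax)$. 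The correct bound, obtained exactly as in the paper's \eqref{bk1}--\eqref{bk2} by applying the \CSineq{} twice (first to the endpoints of bad edges, giving $O(n\qq)$ half-edges, then to their neighbours, giving $O(n^{3/4})$ half-edges), is that a single switching creates a new bad edge with probability $O(n^{-1/4})$, not $O(n\qqw)$. This does not harm your conclusion --- $K\cdot O(n^{-1/4})=o(1)$ still suffices, so the proof is repaired by replacing your incorrect degree-sum bound with the two-step \CS{} estimate --- but as written the justification of the central step would fail.
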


\begin{remark}\label{Rdisjoint}
  It is easily seen that if we switch a bad edge
  $i_\ga j_\gb$ with an edge $k_\gam \ell_\gd$ that has a vertex
  in common with $i_\ga j_\gb$,
\ie, $\set{i,j}\cap\set{k,\ell}\neq\emptyset$,
  then there will always be a new bad edge created.
  It is therefore reasonable to modify the construction by  choosing the edge
  $k_\gam\ell_\gd$ uniformly at random among all edges vertex-disjoint from
  the bad   edge $i_\ga j_\gb$. (Provided this is possible, which it is
  \eg{} if the maximum 
  degree is $<N/4$, as is the case for all large $n$ when \eqref{D2} holds.)

  \refT{Tnotbad} implies that assuming \eqref{D2}, \whp{} we never switch
  two edges with a common vertex; hence the modified version will \whp{}
  yield exactly the same result $\hgndd$, and consequently
  \refTs{T1} and \ref{T2} holds for the
  modified construction too.

  \refE{Edifferent} shows that the modified construction does not yield
  exactly the same distribution of $\hgndd$, nor the uniform distribution.
\end{remark}

\begin{remark}\label{R4ever}
  \refT{Tnotbad} shows that, under our conditions, the switching process
  \whp{}  terminates with a simple graph after a finite number of switchings.
  A different question is whether it always terminates, for a given $n$ and
a graphic degree sequence   $\dd$. 
First, it is easy to see that the process might loop and never terminate, even
  using the modification in \refR{Rdisjoint}, see \refE{E4ever};
  however, in that example at least, this has probability 0.
  Hence the right question is whether the process terminates \as{} (\ie, with
  probability 1).

  It can be shown, see  
  \citet{Sjostrand},
  that there always exists a sequence of switchings leading to a simple
  graph. It follows that if we choose the bad edge to switch at random,
  then the
  process  terminates \as{} with a simple graph. (Note that the switching
  process is a finite-state Markov process, where the simple graphs are
  absorbing states.)
  We conjecture that the same holds for any rule choosing the bad edge to
  switch, but this remains an open problem.

  For completeness, if the switching process does not terminate,
  we define $\hgndd$ by restarting with a new random configuration.
  (This makes no difference for our results.)
\end{remark}

\subsection{Examples}\label{Sex}

\begin{example}\label{Edifferent}
  We consider a small example, both to illustrate the construction and to
  show that it does not yield perfect uniformity.
  
  Let $n=6$ and the degree sequence $\dd=(2,2,1,1,1,1)$. Thus $N=8$ and
  there are $N/2=4$ edges.
  There are $7!!=105$ different configurations of 5 different isomorphism
  types, as shown in Figure~\ref{fig:different}.
  Of these configurations, 72 yield simple graphs:
  48 of the type $\sPc + \sPa$ and 24 of the type $2\sPb$.
  The random simple graph $G(6,\dd)$ thus has the
  distribution
  \begin{align}\label{ex}
    \P\bigpar{G(6,\dd)\cong \sPc+\sPa} = \tfrac{2}{3},
    &&&
            \P\bigpar{G(6,\dd)\cong 2\sPb} = \tfrac{1}{3}.
  \end{align}
  The remaining 33 configurations yield non-simple graphs:
  24 $\sC_1+\sPb+\sPa$,
  6~$\sC_2+2\sPa$ and 3 $2\sC_1+2\sPa$.
  It is easily seen that modifying
  a graph $\sC_1+\sPb+\sPa$ by switching the bad edge and another one,
  randomly   chosen, gives
$\sPc+\sPa$ or $2\sPb$ with the same probabilities $\frac{2}3$ and
$\frac{1}3$ as in \eqref{ex}, while   $\sC_2+2\sPa$
may give $\sPc+\sPa$
but never $2\sPb$
(it may also give $2\sC_1+2\sPa$ or $\sC_2+2\sPa$ again if 
we switch the two parallel edges  with each other;
then further switchings are needed);
the final possibility $2\sC_1+2\sPa$
will give $\sC_1+\sPb+\sPa$ or $\sC_2+2\sPa$ after the first switching.
It follows that if we continue until we have a simple graph $\hgndd$, then
the probability of it being $\sPc+\sPa$ is strictly larger that $\frac{2}3$;
hence $\hgndd$ and $\gndd$ do not have the same distribution.
An elementary but uninteresting calculation shows that for this example,
  \begin{align}\label{hex}
    \P\bigpar{\hG(6,\dd)\cong \sPc+\sPa} = \tfrac{24}{35},
    &&&
            \P\bigpar{\hG(6,\dd)\cong 2\sPb} = \tfrac{11}{35}.
  \end{align}

The same holds also if we modify the construction by
always switching with an edge disjoint from the bad one
(see \refR{Rdisjoint}), although the exact probabilities will be different:
$\frac{31}{45}$ and $\frac{14}{45}$.

  \begin{figure}[ht]
    \centering
 \includegraphics[height=8cm]{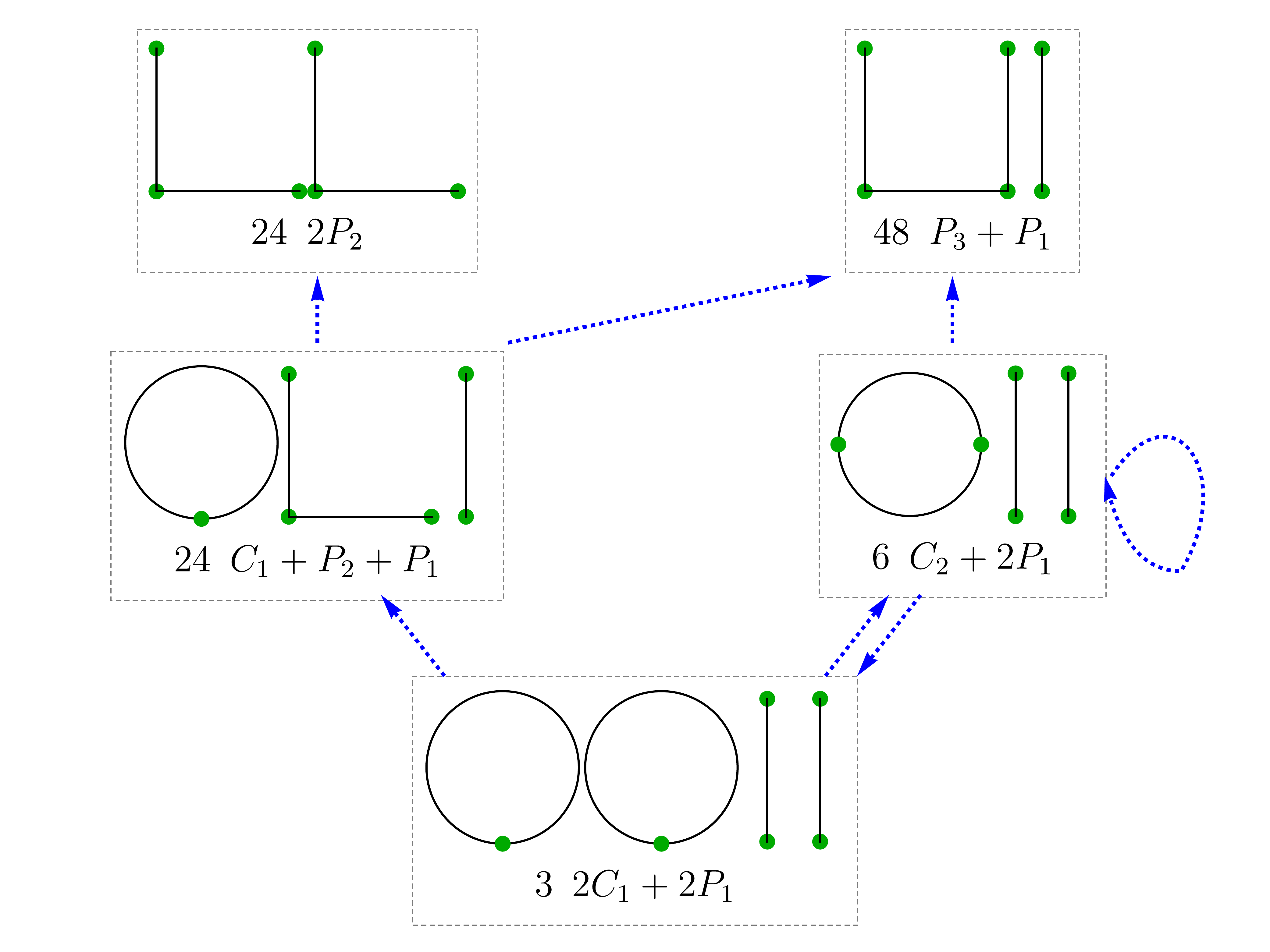}   
    \caption{Multigraphs (unlabelled) given by the degree sequence
      $(2,2,1,1,1,1)$ in \refE{Edifferent}.}
    \label{fig:different}
  \end{figure}
\end{example}

\begin{example}\label{Epick}
As another example, let $n=7$ and $\dd=(2,2,2,1,1,1,1)$; thus there are
$N/2=5$ edges. Suppose that a realization of the configuration model yields
the multigraph $\sC_1+\sC_2+ 2\sPa$ in Figure \ref{fig:pick}. There are
three bad edges: one loop and two parallel edges.

    \begin{figure}[ht]
    \centering
 \includegraphics[height=2cm]{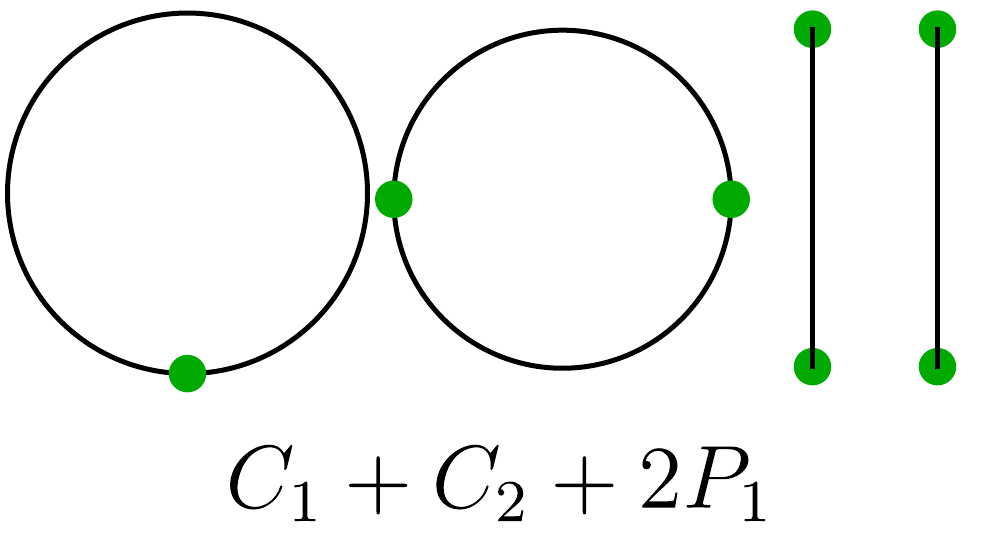}
    \caption{A multigraph given by the degree sequence
     $(2,2,2,1,1,1,1)$ in \refE{Epick}.}
    \label{fig:pick}
  \end{figure}

  If we choose to first switch the loop,
then with probability $\frac12$ we switch
it with one of the parallel edges, yielding the simple graph $\sC_3+2\sPa$.
If instead we switch the loop with one of the isolated edges, the result is
$\sC_2+\sPb+\sPa$, 
which after a second switching yields either a simple graph
$\sPd+\sPa$  or $\sPc+\sPb$,
or (if we switch the two parallel edges with each other) %
$\sC_2+\sPb+\sPa$ again or $2\sC_1+\sPb+\sPa$;
in the latter cases, we obtain
$\sPd+\sPa$  or $\sPc+\sPb$ after further switchings.
Hence,
the probability that the final graph contains a cycle
$\sC_3$ is $\frac12$.

On the other hand, if we begin by switching one of the parallel
edges, then there are three possibilities:
\begin{romenumerate}[-10pt]
  
\item\label{Epick1}
  With probability $\frac{1}4$, we switch with the loop, and
  obtain again $\sC_3+2\sPa$.
  
\item 
With probability $\frac12$ we switch with an isolated edge and
obtain $\sC_1+\sPc+\sPa$ and after a
second switching 
either $\sPd+\sPa$  or $\sPc+\sPb$.
\item 
With probability $\frac14$ we switch the two parallel edges with each other,
yielding either (with probability $\frac{1}{8}$ each) the 
same multigraph $\sC_1+\sC_2+2\sPa$ and we restart,
or $3\sC_1+2\sPa$.
In the latter case, we next switch a loop with either (probability $\frac12$
each)
another loop, yielding $\sC_1+\sC_2+2\sPa$ and we restart,
or with an isolated edge, yielding $2\sC_1+\sPb+\sPa$,
which eventually yields
either $\sPd+\sPa$  or $\sPc+\sPb$.
\end{romenumerate}
Summing up this case, we start again with a graph $\sC_1+\sC_2+2\sPa$ with
probability $\frac{3}{16}$, and thus
the total probability that we end with case \ref{Epick1} is
$\frac14/\frac{13}{16}=\frac{4}{13}$.

Consequently,
 conditioned on this realisation of $\xG(7,\dd)$,
the probability that the final graph $\hG(7,\dd)$ has a cycle $\sC_3$ is
$\frac12$ or $\frac4{13}$,
depending on our choice for the first switching. This
shows that the order of the switchings matters.
However, \refT{T1} is valid in any case, and thus such choices make no
difference asymptotically.

With the modification in \refR{Rdisjoint}, we never switch two parallel
edges with each other so some possibilities disappear in this example;
the final probabilities are
$\frac12$ and $\frac13$, but the conclusion remains the same.
\end{example}

\begin{example}\label{EO}
  Fix $a>0$, consider for simplicity
  only even $n$, and let $\dd:=(m,m,1,\dots,1)$, where
  $m:=\floor{\sqrt{an}}$. Thus all vertices except 1 and 2 have degree 1.
  Note that \eqref{D2} holds, but not \eqref{dmaxo}.
  
  Let $L_1$ and $L_2$ be the numbers of loops at 1 and 2, and
  let $M_{12}$ be the number  of edges 12 in the multigraph $\ggndd$.
 Note that besides these
  edges,  $\gndd$ contains $m-2L_j-M_{12}$ edges from $j$ to a leaf ($j=1,2$),
  and a perfect matching of all remaining vertices.
  In particular, there are   $n/2-O(n\qq)$ isolated edges.

  The multigraph $\ggndd$ is simple if $L_1=L_2=0$ and $M_{12}\le 1$.
  It is easy to see, \eg{}  by the method of moments,
  that asymptotically, $L_1\dto\Po(a/2)$, $L_2\dto\Po(a/2)$, 
  and $M_{12}\dto\Po(a)$, jointly with independent limits.
 When we construct $\hgndd$ by switchings, there is thus only
  $\Op(1)$ bad edges; moreover, \whp{} each switching will be with one of the
  $n/2-O(n\qq)$ isolated edges. In this case, no new bad edge is created by
  the switchings, 
  and we reach a simple graph $\hgndd$ after $L_1+L_2+(M_{12}-1)_+$ switchings;
  furthermore, no edge 12 is created by the switchings.
  It follows that \whp{} $\hgndd$ has an edge $12$ if and only if $M_{12}\ge1$.
  Consequently,
  \begin{align}\label{exbbh}
    \P\bigpar{12\in E(\hgndd)} =\P(M_{12}\ge 1)+o(1)\to \P\bigpar{\Po(a)\ge1}
    = 1-e^{-a}.
  \end{align}

  On the other hand, a simple graph with degree sequence $\dd$
  has either
  \begin{romenumerate}
   \item
    no edge 12 and $m$ edges from each of 1 and 2 to leaves $k\ge3$, together
  with a perfect matching of the remaining $n-2m-2$ vertices.
   \item
    an edge 12 and $m-1$ edges from each of 1 and 2 to leaves $k\ge3$, together
    with a perfect matching of the remaining $n-2m$ vertices.
  \end{romenumerate}
  Let the numbers of graphs of these two types   be $N_0$ and $N_1$.
  Then
  \begin{align}
    N_0&=\binom{n-2}{m}\binom{n-2-m}{m}(n-2m-3)!!
    \\
        N_1&=\binom{n-2}{m-1}\binom{n-1-m}{m-1}(n-2m-1)!!
  \end{align}
  and a simple calculation yields
  \begin{align}
    \frac{N_1}{N_0}=\frac{m^2}{n-2m}\to a.
  \end{align}
  Hence,
  \begin{align}\label{exbb}
    \P\bigpar{12\in E(\gndd)} = \frac{N_1}{N_0+N_1}\to \frac{a}{1+a}.
  \end{align}

Comparing \eqref{exbbh} and \eqref{exbb}, we see that the limits differ, and
thus \refT{T1} does not hold for this example.
Similarly, \refCs{C1} and \ref{C2} fail, for example if $f_n(G)$ is the
indicator of the event that the multigraph $G$ contains an edge
where both endpoints have degrees $\ge2$. 
This example shows that the condition \eqref{dmaxo} cannot be omitted from
\refT{T1} and its corollaries.
\end{example}

\begin{example}\label{E4ever}
      \begin{figure}[ht]
    \centering
 \includegraphics[height=14mm]{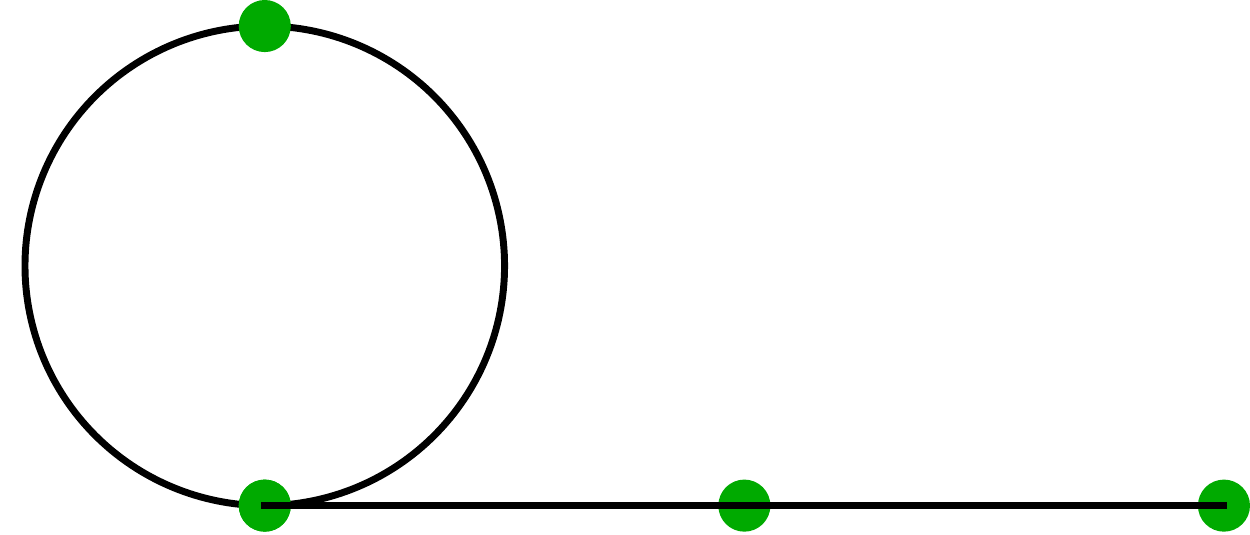}   
    \caption{A multigraph given by the degree sequence
      $(3,2,1,1)$ in \refE{E4ever}.}
    \label{fig:4ever}
  \end{figure}
  Let $\dd=(3,2,2,1)$, and suppose that the initial multigraph
  $\xG(4,\dd)$ has edges 12, 12, 13, 34, see Figure \ref{fig:4ever}.
  If we switch one of the parallel
  edges 12 with 34, then we may get a simple graph, but we may also create
  another edge 13 and get the edge set 12, 13, 13, 24. The latter multigraph
  is isomorphic to the original one, and we may continue and cycle between
  these two multigraphs for ever. Hence, there is no deterministic guarantee
  that the switching process always leads to a simple graph.
Note also the modification in \refR{Rdisjoint} does not help; we still can
make the same switchings.
  
  However, note that in this example, switching the same edges but in
  different orientations yields a simple graph.
  Since we order the half-edges at random when  switching, the infinite
  sequence of switchings above has probability 0; more generally, it is easily
  verified that for this example, \as{} the process terminates
  after a finite number of switchings.
\end{example}

\section{The distribution of $\hgndd$}\label{Spf0}

\subsection{More notation}\label{SMnot}
Let $S$ be number of switchings used in the construction.
Let 
$\gG_k$ ($0\le k\le S$)
be the configuration after $k$ switchings, and let $G_k$ be the corresponding
multigraph. Thus $\ggndd=G_0$ and $\hgndd=G_S$.

Let $\cB_k$ be the set of endpoints of bad edges in $G_k$, and let $\cA_k$
be the set of their neighbours in $G_k$.

Let $b_k$ be the bad edge in $\gG_{k-1}$ chosen for the $k$th switching, and
let $e_k$ be the (random) other edge used in that switching.

An \emph{\medge} (in a graph or configuration) is a set of $m$ parallel edges
that are not loops, and such that there are no further edges parallel to
them.
(I.e., the multiplicity of the edge equals $m$.)
Let $L$ be the number of loops in $\gG_0$ (\ie, in $G_0=\ggndd$), and let
$M_m$ ($m\ge2$) be the number of \medge{s}.
Furthermore, let
\begin{align}\label{MM}
  M:=\sum_{m\ge2} \binom m2 M_m,
\end{align}
the number of pairs of parallel edges in $\gG_0$.

Let $\GGn$ be the set of all simple graphs on $[n]$ with degree sequence
$\dd$.
Let
$\umu\in\cP(\GGn)$ be the distribution of $\gndd$, \ie, the uniform
distribution on $\GGn$, and let
$\hmu\in\cP(\GGn)$ be the distribution of $\hgndd$.

We sometimes tacitly assume that $n$ (and thus $N$) is large enough to avoid
trivialities (such as division by 0).

\subsection{Silver and golden}\label{ASG}

We say that 
the construction of $\hgndd$ is
\emph{silver} if
\begin{PXenumerate}{S}
  
\item \label{S1}
  No new bad edge is created during the construction.
\item\label{S2}
  No additional edge $e_k$ used for a switching has an endpoint in $\cB_0$.
\end{PXenumerate}
The construction is \emph{golden} if it is silver and furthermore
\begin{PXenumerate}{G}
  \item\label{G1}
  $\ggndd$ has no triple edges. I.e., $M_m=0$ for $m\ge3$.
  
\item\label{G2}
  The loops and double edges in $\ggndd$ are vertex-disjoint.

\item\label{G3}
  The additional edges $e_k$ used for the switchings are vertex-disjoint
  with each other. 
\end{PXenumerate}
Let $\cS$ and $\cG$ be the events that the construction is silver or golden,
respectively, and let $\bcS$ and $\bcG$ be their complements.
Furthermore, let $\cS_s:=\cS\cap\set{S=s}$
and $\cG_s:=\cG\cap\set{S=s}$ ($s\ge0$).

In a silver construction, each \medge{} is reduced to a single (good)
edge by $m-1$ switchings, and thus
\begin{align}\label{SS}
  S=L+\sum_{m\ge2} (m-1)M_m.
\end{align}
In particular, in a golden construction, recalling \eqref{MM}, 
\begin{align}\label{SG}
  S=L+M_2=L+M.
\end{align}
In a silver construction, \eqref{SS} and \eqref{MM} yield the inequality
\begin{align}
  \label{SS2}
  S\le L+M.
\end{align}

\begin{lemma}\label{LSG}
  \begin{thmenumerate}
  \item\label{LSG:S}
    If \eqref{D2} holds, then the construction of $\hgndd$ is \whp{} silver.
  \item\label{LSG:G}
    If \eqref{D2} and \eqref{dmaxo} hold,
    then the construction of $\hgndd$ is \whp{} golden.
  \end{thmenumerate}
\end{lemma}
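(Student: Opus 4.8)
\textbf{Proof proposal for \refL{LSG}.}
The plan is to estimate the first and second moments of the quantities $L$, $M_2$, and $\sum_{m\ge3}M_m$ (equivalently, triple-or-higher edges) in $\ggndd$, and then to control the few random switchings. For part (i), I would first recall the standard configuration-model computation: under \eqref{D2} the expected number of loops $\E L$ and the expected number of pairs of parallel edges $\E M$ are both $O(1)$, since $\E L = \sum_i \binom{d_i}{2}/(N-1) = O(N^{-1}\sum_i d_i^2) = O(1)$ and similarly $\E M = O\bigl(N^{-2}(\sum_i d_i^2)^2\bigr)=O(1)$ using \eqref{D2}. Hence by Markov's inequality $L+M=\Op(1)$, so by \eqref{SS2} the number of switchings is $S=\Op(1)$; in fact \refT{Tnotbad} already gives this and, moreover, that \whp{} no new bad edge is created, which is exactly \ref{S1}. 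So the substance of (i) is \ref{S2}: no additional edge $e_k$ chosen in the construction has an endpoint in $\cB_0$, the (random, but \whp{} $O(1)$-sized) set of endpoints of bad edges in $\ggndd$. Conditioning on $\gG_0$ and on the (at most $S\le L+M=\Op(1)$) switchings, each $e_k$ is a uniformly random edge among the $\Theta(N)$ edges present; the probability it meets a given vertex $v$ is at most $d_v/(N/2 - O(1)) = O(d_v/N)$. Summing over the $O(1)$ vertices in $\cB_0$ and the $\Op(1)$ values of $k$, and using $d_v\le\dmax=O(N\qq)=O(n\qq)$, the probability that \ref{S2} fails is $O\bigl(\dmax/N\bigr)\cdot\Op(1)$ in expectation. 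Wait—$\dmax/N$ need not be $o(1)$ under \eqref{D2} alone (e.g.\ $\dmax\asymp n\qq$), so a cruder bound is not quite enough; but the endpoints in $\cB_0$ are endpoints of \emph{bad} edges, and a bad edge incident to a high-degree vertex $v$ is itself a low-probability event, so one should instead bound $\P(\text{\ref{S2} fails})$ by $\E\sum_{k}\sum_{v\in\cB_0}\P(e_k\ni v\mid \gG_0)$ and expand: the relevant sum is $\sum_v (d_v/N)\cdot\P(v\in\cB_0)$, and $\P(v\in\cB_0)=O(d_v^2/N)$ (probability $v$ carries a loop or a parallel edge is $O(\binom{d_v}2/N)$), giving $O(N^{-2}\sum_v d_v^3)$—still not obviously $o(1)$. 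The clean fix, which I expect the author uses, is to note that in a silver construction each vertex in $\cB_0$ has bounded degree \emph{with the right probability}: more carefully, $\P(\text{\ref{S2} fails})\le \E\bigl[(L+M)\cdot \sum_{v\in\cB_0} O(d_v/N)\bigr]$ and one computes this expectation directly over the configuration model, where it telescopes to a sum like $N^{-1}\sum_{i,j}(d_i\wedge\text{stuff})$ that is $O(1)\cdot o(1)$ precisely because a loop/parallel edge at $i$ already ``uses up'' two of $i$'s half-edges, decoupling the degree from the count. I would carry this out as a short explicit first-moment bound on the configuration model.

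For part (ii), I add the three golden conditions \ref{G1}, \ref{G2}, \ref{G3}. Condition \ref{G1} ($M_m=0$ for $m\ge3$): $\E\bigl(\sum_{m\ge3} M_m\bigr)\le \E\#\{\text{triple edges}\} = O\bigl(N^{-3}(\sum_i d_i^2)^3\bigr)$. Under \eqref{D2} this is $O(1)$, not $o(1)$, so here the extra hypothesis \eqref{dmaxo} is essential: write $\sum_i d_i^3 \le \dmax \sum_i d_i^2 = o(n\qq)\cdot O(n) = o(n^{3/2})$, and the number of triples of half-edges at three vertices forming a triple edge is $O\bigl(N^{-3}(\sum_i d_i^2)(\sum_j d_j^2)(\sum_k d_k^2)\bigr)$—actually the sharper bound uses one factor $d^3$: the count of (ordered) triple edges is $\le N^{-3}\sum_{i\ne j}d_i^3 d_j^3 \le N^{-3}\bigl(\sum_i d_i^3\bigr)^2 = N^{-3}\cdot o(n^3) \cdot\dmax^0$... let me just say: $\E\#\{\text{triple edges}\}=O\bigl(\dmax\,N^{-3}(\sum d_i^2)^2\bigr)=O(\dmax/N)=o(1)$. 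Hence \whp{} $M_m=0$ for all $m\ge3$. Condition \ref{G2} (loops and double edges pairwise vertex-disjoint): the expected number of ``coincidences''—two bad structures sharing a vertex—is again a first-moment count on the configuration model of the shape $O\bigl(\dmax\cdot N^{-k}(\sum d_i^2)^{\ell}\bigr)$ for suitable $k,\ell$, which is $O(\dmax/N)=o(1)$ under \eqref{dmaxo}; I would enumerate the $O(1)$ shapes (loop–loop at a vertex, loop–double edge sharing a vertex, double–double sharing a vertex) and bound each. Condition \ref{G3} (the additional edges $e_k$ pairwise vertex-disjoint): conditioning on $\gG_0$ and on the $S=\Op(1)$ switchings, each pair $e_k,e_{k'}$ is a pair of (essentially uniformly random, distinct) edges, and $\P(e_k\cap e_{k'}\ne\emptyset\mid\gG_0)=O(\sum_v d_v^2/N^2)=O(1/N)\cdot O(\sum_v d_v^2/N)=O(1/N)$—wait, $\sum_v d_v^2 = O(N)$, so this is $O(1/N)\cdot O(1)=O(1/N)$... hmm, actually the probability two uniform random edges meet is $\sum_v \binom{d_v}2\big/\binom{N/2}{2}\cdot(\text{const}) = O(N^{-2}\sum_v d_v^2) = O(1/N)=o(1)$. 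Summing over the $\Op(1)$ pairs gives $o(1)$ in probability.

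The main obstacle, as flagged above, is the bookkeeping in \ref{S2} and \ref{G2}: one must verify that the relevant first-moment sums over the configuration model genuinely come out $o(1)$ (for \ref{G2}) or $O(1)\cdot o(1)$ (for \ref{S2}) rather than merely $O(1)$, and for \ref{S2} this requires carefully \emph{not} bounding $d_v$ by $\dmax$ but instead exploiting that $v\in\cB_0$ already forces half-edges of $v$ into a loop or parallel edge, which keeps the power of $\dmax$ at zero so that \eqref{D2} alone suffices for (i). Everything else is: (a) cite \refT{Tnotbad} for \ref{S1} and for $S=\Op(1)$; (b) standard configuration-model first-moment estimates for loops, double edges, triple edges and their coincidences, with \eqref{dmaxo} inserted exactly where a factor $\dmax/N$ appears; (c) a one-line conditioning argument for the $\Op(1)$ random switched edges $e_k$ in \ref{G3}. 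I would present these as a sequence of short lemma-style estimates, collect the $o(1)$ failure probabilities by a union bound over the $O(1)$ event types, and conclude.
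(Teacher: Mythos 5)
There is a genuine gap in part (i), and it is the heart of the lemma. You dispose of \ref{S1} by citing \refT{Tnotbad}, but in the paper \refT{Tnotbad} is an immediate corollary of \refL{LSG}\ref{LSG:S}, so this is circular; likewise \eqref{SS2}, which you invoke to get $S=\Op(1)$, is only valid \emph{for silver constructions}, so it cannot be used before silverness is established. Even setting the circularity aside, you never address what \ref{S1} actually requires: a switching of a bad edge $i_\ga j_\gb$ with $e_k=k_\gam\ell_\gd$ creates a new bad edge unless $e_k$ avoids not only the bad-edge endpoints but also their \emph{neighbourhoods} (otherwise the new edge $i\ell$ or $kj$ may duplicate an existing edge). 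The paper's proof conditions on $L+M\le K$, notes $|\cB_0|\le 2K$, and uses the \CSineq{} twice: $\sum_{i\in\cB_0}d_i\le(2K\sum_i d_i^2)^{1/2}=O(n^{1/2})$, and then for the neighbourhood set $\cA_k$, $\sum_{i\in\cA_k}d_i\le(|\cA_k|\sum_i d_i^2)^{1/2}=O(n^{3/4})$, so a uniformly random $e_{k+1}$ misses $\cB_0\cup\cA_k$ with probability $1-O(n^{-1/4})$; an induction over the at most $K$ switchings (silverness is preserved step by step, so $\cB_k\subseteq\cB_0$ and the switching count stays $\le K$), followed by Markov's inequality and $K\to\infty$, completes (i) under \eqref{D2} alone. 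Controlling $\cA_k$ is the genuine crux — a crude bound $|\cA_k|\dmax$ can be of order $n$, so some version of the Cauchy--Schwarz step is unavoidable — and your proposal never reaches it.

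Your treatment of \ref{S2} is also off track: you assert that $\dmax/N$ need not be $o(1)$ under \eqref{D2}, but \eqref{D2} forces $\dmax=O(n^{1/2})$ and $N=\Theta(n)$, so the simple union bound you discarded (probability $O(K\dmax/N)=O(n^{-1/2})$ that $e_k$ hits the $\le 2K$ vertices of $\cB_0$, after truncating on $L+M\le K$) in fact works, and the ``telescoping'' fix you sketch instead is never carried out. Part (ii) of your proposal is essentially the paper's argument — first-moment counts for triple edges and for non-disjoint loops/double edges with \eqref{dmaxo} inserted (though your exponents of $\dmax$ are garbled in places; the correct bounds are of order $\dmax^2/n=o(1)$), plus a conditioning argument for \ref{G3} — but it presupposes part (i) and $S=\Op(1)$, so as written the proposal does not constitute a proof of the lemma.
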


\begin{proof}
  \pfitemref{LSG:S}
  A well-known simple calculation 
  shows that, assuming \eqref{D2},
  \begin{align}
    \E L& = \sum_i \frac{d_i(d_i-1)}{2(N-1)}
          \le \frac{1}{N}\sumin d_i(d_i-1)
                    \le \frac{1}{N}\sumin d_i^2
          = O(1),
\label{EL}    \\
    \E M& = \sum_{i<j} \frac{d_i(d_i-1)d_j(d_j-1)}{2(N-1)(N-3)}
          \le C \biggpar{\frac{1}{N}\sumin d_i(d_i-1)}^2 = O(1).
          \label{EM}
  \end{align}
  Hence, $\E(L+M)=O(1)$, \ie, there is a constant $C$ such that
  \begin{align}\label{ELMC}
    \E (L+M)\le C.
  \end{align}
  
  Fix a large integer $K$, and assume that $L+M\le K$.
  Then $|\cB_0|\le L+2M\le
  2K$. 
Let $k\ge0$, and
  suppose that the construction
  has been silver for the first $k$ switchings (in the obvious sense).
  Thus no new bad edges have been created and hence $\cB_{k}\subseteq\cB_0$.
  By the \CSineq{} and \eqref{D2},
  the number of half-edges belonging to vertices in $\cB_0$ is

\begin{align}\label{bk1}
  \sum_{i\in\cB_{0}} d_i
  \le \Bigpar{|\cB_{0}|\sum_{i\in\cB_{0}}d_i^2}\qq
  \le (2K)\qq\Bigpar{\sumin d_i^2}\qq  = O\bigpar{ n\qq},
\end{align}
and thus
\begin{align}
  \label{bka}
  |\cA_{k}|
  \le \sum_{i\in\cB_{k}} d_i
  \le \sum_{i\in\cB_{0}} d_i
  =O\bigpar{n\qq}.
\end{align}
Furthermore, the number of half-edges belonging to vertices in $\cA_k$ is,
by the argument in \eqref{bk1} together with \eqref{bka},
\begin{align}\label{bk2}
   \sum_{i\in\cA_{k}} d_i \le \Bigpar{|\cA_{k}|\sumin d_i^2}\qq
  = O\bigpar{|\cA_k|\qq n\qq}
  = O\bigpar{ n^{3/4}}.
\end{align}
It follows from \eqref{bk1} and \eqref{bk2} that when we pick a random edge
$e_{k+1}$
for the next switching, the probability that it has an endpoint in $\cB_0$ or
$\cA_k$ is $O(n^{-1/4})=o(1)$. Hence, \whp{} we switch with an edge
$e_{k+1}$ not
having any endpoint in $\cB_k\cup\cA_k$, and it is easy to see that then no
new bad edge is created.
Furthermore, \whp{} $e_{k+1}$ has no endpoint in $\cB_0$.
Consequently, \whp{} the construction remains silver for the $(k+1)$th
swithching too. Since only $L+M\le K$ switchings are needed,
it follows by induction that 
\whp{} the construction is silver until the end.

We have shown that for every fixed $K$,
$\P\bigpar{\bcS\cap\set{L+M\le K}}\to0$.
Hence, using also  Markov's inequality and \eqref{ELMC},
\begin{align}\label{tak}
  \P(\bcS)&
            \le \P\bigpar{\bcS\cap\set{L+M\le K}}+ \P\bigpar{L+M>K}
  \le o(1)+\frac{\E(L+M)}{K}
\notag  \\
  &  \le o(1)+\frac{C}{K}.
\end{align}
Thus $\limsup_\ntoo \P(\bcS)\le C/K$. Since $K$ is arbitrary, $\P(\bcS)\to0$.

\pfitemref{LSG:G}
The expected number of triples of parallel edges in $\ggndd$ is at most,
using \eqref{D2} and \eqref{dmaxo},
\begin{align}
  \sum_{i<j}d_i^3d_j^3\frac{C}{N^3}
  \le C \frac{\dmax^2}{N^3}\Bigpar{\sum_i d_i^2}^2
  \le C  \frac{\dmax^2}{n}
=o(1).
\end{align}
Hence, \ref{G1} holds \whpx.

Similar calculations show that
the expected number of pairs of 2 loops, a loop and a double edge,
or 2 double edges, are $o(1)$. Hence,  \ref{G2} holds \whpx.

Finally, fix $k\ge1$.
Given $e_1,\dots,e_{k-1}$, these have (at most) $2(k-1)$ endpoints.
There are at most $2(k-1)\dmax$ edges with an endpoint in this set.
Since $e_k$ is drawn at random among the $N/2-1$ edges distinct from $b_k$,
the probability that $e_k$ is not
vertex-disjoint from $e_1,\dots,e_{k-1}$ is at most
$2(k-1)\dmax/(N/2-1)=o(1)$.
Note also that if the construction is silver and $L+M\le K$, then at most
$K$ switchings are done by \eqref{SS2}.
It follows that for any fixed $K$,
\begin{align}
  \P\bigpar{\bcG\cap\cS\cap\set{L+M\le K}}=o(1).
\end{align}
The argument in (and after) \eqref{tak} shows that
$  \P\bigpar{\bcG\cap\cS}\to0$.
Hence, using also part \ref{LSG:S},
$  \P\bigpar{\bcG}\le \P\bigpar{\bcG\cap\cS}+\P\bigpar{\bcS}\to0$.
\end{proof}

\begin{proof}[Proof of \refT{Tnotbad}]
  Immediate from \refL{LSG}\ref{LSG:S}, since a silver construction creates
  no new bad edges by \ref{S1} and uses $S\le L+M$ switchings by
  \eqref{SS2}, so $\E S\le \E(L+M)=O(1)$ by \eqref{ELMC} and thus
  $S=\Op(1)$.
\end{proof}

\subsection{The choice of a bad edge}\label{Schoice}

As said in \refR{Rrule},
the random graph $\hgndd$ may depend on the (unspecified) rule for choosing
the bad edge for each switching.
However, all rules yield asymptotically the same result, at least provided
\eqref{D2} holds.

\begin{lemma}
  \label{Lrule}
  Assume \eqref{D2}.
  Let $\hgindd$ and $\hgiindd$ be created by using two different rules
  for choosing the bad edge for each switching.
  Then
  $\dtv(\hgindd,\hgiindd)\to0$.
\end{lemma}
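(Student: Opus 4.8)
The plan is to show that, under \eqref{D2}, the rule used to pick the bad edge essentially never matters, because \whp{} the construction is silver (\refL{LSG}\ref{LSG:S}) and in a silver construction the switchings performed for different bad edges do not interact. First I would condition on $\ggndd=G_0$, which fixes the set of bad edges together with $L$, $M$, and $\cB_0$. By \refL{LSG}\ref{LSG:S} and \eqref{ELMC}, for any $\eps>0$ we may fix $K$ so that $\P(L+M>K)<\eps$ uniformly in $n$, and then restrict attention to configurations $G_0$ with $L+M\le K$; on the event $\cS$ of a silver construction, by \eqref{SS2} at most $K$ switchings are ever made.

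The key structural observation is that, on the silver event, the final graph $\hgndd$ is a \emph{deterministic function of $G_0$ together with the unordered collection of "additional data" used for the switchings}, and does \emph{not} depend on the order in which the bad edges are processed. Indeed, by \ref{S1} no switching creates a new bad edge, and by \ref{S2} no additional edge $e_k$ touches $\cB_0$; hence the sub-configurations around distinct bad edges (loops, and groups of parallel edges) are modified independently of one another, each \medge{} being resolved by its own $m-1$ switchings using edges disjoint from $\cB_0$. So the map from (bad edge $b$, chosen edge $e$, chosen orientation of $e$) to the resulting local surgery commutes across distinct bad edges, and the end result is symmetric in the order. Consequently, for a silver construction one can build a coupling of $\hgindd$ and $\hgiindd$ as follows: sample $G_0$ in common; then, for each bad edge, sample the \emph{same} additional edge(s) and orientation(s) regardless of which rule is processing that bad edge when. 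Since both rules process exactly the same set of bad edges (the ones present in $G_0$; no new ones appear), this yields $\hgindd=\hgiindd$ on the intersection of the two silver events. More carefully, one should couple so that the silver events coincide as well: condition on $G_0$ with $L+M\le K$, and at each step draw $e_k$ uniformly; the "bad" event that $e_k$ hits $\cB_{k}\cup\cA_k$ or $\cB_0$ has probability $o(1)$ by \eqref{bk1}--\eqref{bk2}, uniformly over the (at most $K$) steps and over which bad edge is being treated, so \whp{} both runs stay silver and use the identical local data.

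Putting the pieces together: for fixed $\eps$ and the corresponding $K$,
\begin{align*}
  \P\bigpar{\hgindd\neq\hgiindd}
  \le \P(L+M>K) + \P\bigpar{\bcS^{(1)}\cup\bcS^{(2)} \,\big|\, L+M\le K}
  \le \eps + o(1),
\end{align*}
where $\bcS^{(j)}$ is the non-silver event for rule $j$ under the above coupling. Letting $n\to\infty$ and then $\eps\to0$ gives $\dtv(\hgindd,\hgiindd)\le\P(\hgindd\neq\hgiindd)\to0$ via \eqref{dtv3}. The main obstacle is the bookkeeping in the second paragraph: one must verify precisely that in a silver construction the local surgeries around distinct bad edges are genuinely independent of the processing order — in particular that resolving an \medge{} always takes the same $m-1$ switchings and never interferes with a loop or another multi-edge — and set up the coupling so that the (rule-dependent) silver events are matched, rather than merely each having probability $1-o(1)$. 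The estimates \eqref{bk1}--\eqref{bk2} from the proof of \refL{LSG} are exactly what is needed for the latter, so no new probabilistic input is required beyond a careful combinatorial description of silver constructions.
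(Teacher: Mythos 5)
Your proposal is correct and takes essentially the same route as the paper's proof: couple the two constructions on a common initial configuration, matching the auxiliary edge and orientation choices per bad edge, note that on the silver event the same switchings are made (possibly in a different order) so the final graphs coincide, and conclude from \refL{LSG}\ref{LSG:S} and \eqref{dtv3}. The extra $\eps$--$K$ truncation and the repetition of the estimates \eqref{bk1}--\eqref{bk2} are superfluous, since \refL{LSG}\ref{LSG:S} already gives $\P(\bcS)\to0$ unconditionally and the coupling only needs the first construction's silver event.
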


\begin{proof}
  Suppose that we have a silver construction
of $\hgindd$; then only edges that
are bad already in $\gG_0$ will be switched.
It follows that 
we may couple the two constructions of $\hgndd$,
starting with the same $\gG_0$, such that if the construction of $\hgindd$
is silver, then, in the sequence of graphs $G_0,\dots,G_S=\hgndd$,
exactly the same switchings are made in both constructions,
although perhaps in different
order; consequently, the 
two constructions yield the same $\hgndd$.
(On the level of configurations, the switchings may differ, because of the
choice of one parallel edge out of several.)
Consequently,
for this coupling, by \refL{LSG}\ref{LSG:S},
\begin{align}
  \P\bigpar{\hgindd\neq\hgiindd}=o(1).
\end{align}
This proves the lemma by \eqref{dtv3}.
\end{proof}

\refL{Lrule} implies that if \refT{T1} or \refT{T2} holds for some rule, then
it holds for any rule. We may thus for the proofs below assume that we
each time choose the bad edge in $\gG_k$
that is first according to the following order.
\begin{PXenumerate}{B}
  
\item\label{B1}
  First the loops, in  lexicographic order.
  
\item\label{B2}
  The \medge{s} in lexicographic order of their endpoints, and for
  each \medge{} its $m$ edges in $\gG_k$ in lexicographic
  order.
\end{PXenumerate}
The exact definition of the lexicographic order in these cases is left to
the reader. In fact, any fixed order would do.

\subsection{The subsubsequence principle}\label{Ssubsub}
Next we note that it suffices to prove that \refTs{T1} and \ref{T2}
always hold for some subsequence.
This is a general argument, which we repeat for convenience:
Suppose that \refT{T1} fails; then there exists a sequence $\ddn$ satisfying
the assumptions and such that \eqref{t1a} fails; thus there exists
$\eps>0$ and a subsequence
such that
$\dtv\bigpar{\hgndd,\gndd}\ge\eps$ for every $n$ in the subsequence.
But by assumption we can find a subsubsequence such that \refT{T1} holds, a
contradiction.  The proof for \refT{T2} is essentially the same.

In particular, assuming \eqref{D2},
by selecting a suitable subsequence we may in the remainder of the proofs
assume that \eqref{D2lim} holds for
some $\mu>0$ and $\mu_2<\infty$. Let $\nu:=\mu_2-\mu$; then \eqref{D2lim}
implies $N/n\to\mu$ and 
\begin{align}\label{nu}
  \frac{1}{n}  \sumin d_i(d_i-1)\to\nu,
  &&&
      \frac{1}{N}  \sumin d_i(d_i-1)\to\frac{\nu}{\mu}.
\end{align}

If $\nu=0$, then \eqref{nu} and
\eqref{EL}--\eqref{EM} show that $\E L\to0$ and $\E M\to0$.
Consequently, \whp{} $L=M=0$, so $\ggndd$ is simple, in which case
$\hgndd=\ggndd$; thus $\dtv(\hgndd,\ggndd)\to0$.
Furthermore, by the fact that $\gndd$ has the same distribution as
$\ggndd$ conditioned on being simple, we can couple $\gndd$ and $\ggndd$
such that they are equal when $\ggndd$ is simple; thus by \eqref{dtv3},
\begin{align}
  \dtv(\gndd,\ggndd)\le \P\bigpar{\ggndd\text{ is not simple}} \to0.
\end{align}
Hence \refTs{T1} and \ref{T2} follow trivially when $\nu=0$.
Consequently, in the proofs we may assume $\nu>0$.

\subsection{Silver constructions}
\label{Ssilver}
Consider a silver construction.
Each switching of a loop creates a copy of $\sPb$, and each
set of $m-1$ switchings of an \medge{} creates $m-1$ copies of $\sPc$
having a common middle edge, which is the one edge remaining of the original $m$
parallel ones.
Colour the created copies of $\sPb$ and $\sPc$ 
\emph{red}; these are regarded as (not
necessarily disjoint) subgraphs of the configuration.
Note that the non-leaves in the red paths belong to $\cB_0$,
while a leaf is an endpoint of some $e_k$ and thus, by \ref{S2}, lies
outside $\cB_0$. 
By \ref{S2}, the edges  in the 
red paths 
will not be used
by later switchings, and thus the red paths
remain as subgraphs of $\gG_S$,  and thus of $G_S=\hgndd$.

The red paths do not have to be vertex-disjoint. However,
by the remarks above and \ref{S1}--\ref{S2},
the set $\cR$ of red paths in $\gG_S$, or equivalently in $G_S=\hgndd$,
has the following
properties.
\begin{PXenumerate}{P}
\item \label{P23}
  Each red path has length 2 or 3, \ie, is a copy of $\sPb$ or $\sPc$.
  \item \label{Pe}
  The red paths  are edge-disjoint, except that several red
  paths $\sPc$ may share the same middle edge.
\item\label{Pv}
  An leaf of a red path is not a non-leaf of another red path.
\end{PXenumerate}

Define the \emph{gap} of a red path 
as its pair of endpoints.
This is the pair of endpoints of the edge $e_k$ used to create this red
path.
By \ref{S1} and \ref{S2}, $e_k$ was a good edge in $\gG_0$, \ie, there
was no parallel edge in $\gG_0$. Thus the red paths have also
the properties:
\begin{PXenumerateq}{P}
\item\label{Pgap1}
  The gaps of the red paths are distinct pairs of vertices.
\item\label{Pgap2}
  The gaps of the red paths are 
  non-edges in $G_S$. 
\end{PXenumerateq}

Furthermore, by \ref{B2}:

\begin{PXenumerateq}{P}
  \item\label{Plast}
If a red $\sPc$ is given by the edges
$i_\ga j_\gb$, $j_\gam k_\gd$, $k_\eps \ell_\zeta$ in $\gG_S$, then
necessarily
the edge $j_\gam k_\gd$ comes after $j_\gb k_\eps$ in the lexicographic order.
\end{PXenumerateq}

Conversely, in a silver construction,
the red paths in $\gG_S$ determine precisely the switchings that have
been made; hence they together with $\gG_S$ determine the initial
configuration $\gG_0$ and also, by \ref{B1}--\ref{B2},  the order of the
switchings.
Moreover, given any simple
configuration $\gG$ (on the given set of half-edges) with corresponding
graph $G$ and
a set of red paths in $\gG$ (or $G$)
satisfying \ref{P23}--\ref{Plast}
(with obvious notational changes here and below:
$\gG_S$ is replaced by $\gG$ and $G_S$ by $G$),
there exists a unique initial configuration $\gG_0$ and a unique silver
sequence of switchings, satisfying \ref{B1}--\ref{B2},
that yields $\gG_S=\gG$ with the given red paths.
Each such history with a given number $S$ of switchings has the same
probability. 

Consequently, 
dropping ``red'':
\begin{claim}
\label{ClgG}
For a fixed $s\ge0$,
the
conditional probability $\P\bigpar{\gG_S=\gG\mid\cS_s}$ is proportional to
the number of sets of $s$ paths in $\gG$ that satisfy \refPP.
\end{claim}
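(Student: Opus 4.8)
The plan is to read off the claim from two facts set up in the discussion above: a bijection between silver constructions and admissible families of paths, and the fact that all silver constructions using a fixed number of switchings are equiprobable. Fix $s\ge0$ and a simple configuration $\gG$ on the given half-edges (this is the only case of interest, since $\gG_S=\hgndd$ is always simple). First I would pin down the bijection. Sending a silver construction with $S=s$ and $\gG_S=\gG$ to its set of $s$ red paths, viewed as subpaths of the configuration $\gG$, defines a map into the collection of families of $s$ paths in $\gG$ satisfying \refPP. By the discussion preceding the claim this map is a bijection: it is well defined and injective because in a silver construction the red paths together with $\gG_S$ recover $\gG_0$ and, via the rule \ref{B1}--\ref{B2}, the whole ordered sequence of switchings; and it is surjective because any family satisfying \refPP{} can be ``undone'' into a unique such silver construction ending at $\gG$. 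Write $\rho_s(\gG)$ for the number of families of $s$ paths in $\gG$ satisfying \refPP.

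Next I would compute the probability of a single silver construction with exactly $s$ switchings. Its initial configuration $\gG_0$ is one of the $(N-1)!!$ perfect matchings of the half-edges, each occurring with probability $1/(N-1)!!$; and at the $k$th switching the bad edge $b_k$ is prescribed by \ref{B1}--\ref{B2}, while the other edge $e_k$ together with the ordering of its two half-edges is drawn uniformly from the $2(N/2-1)=N-2$ possibilities. Hence every silver construction with exactly $s$ switchings has the same probability
\[
 p_s:=\frac{1}{(N-1)!!}\cdot\frac{1}{(N-2)^{s}},
\]
and in fact only the independence of this value from $\gG$ and from the particular construction is used below.

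Combining the two steps, the event $\set{\gG_S=\gG}\cap\cS_s$ is, through the bijection, the disjoint union of $\rho_s(\gG)$ single-construction events, each of probability $p_s$; hence $\P\bigpar{\set{\gG_S=\gG}\cap\cS_s}=\rho_s(\gG)\,p_s$, and dividing by $\P(\cS_s)$ yields
\[
 \P\bigpar{\gG_S=\gG\mid\cS_s}=\frac{p_s}{\P(\cS_s)}\,\rho_s(\gG),
\]
which is $\rho_s(\gG)$ times a factor independent of $\gG$, as claimed. The one substantive step is the bijection, and within it the surjectivity: one must check that undoing the switchings coded by an arbitrary family satisfying \refPP{} really returns a genuine silver construction with the prescribed final configuration, and that the order forced by \ref{B1}--\ref{B2} makes this reconstruction unambiguous. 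This is precisely what properties \refPP{} were designed to guarantee, and is carried out in the paragraph preceding the claim; the probability bookkeeping in the other two steps is routine.
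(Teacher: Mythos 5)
Your proof is correct and follows essentially the same route as the paper: the bijection between silver histories with $s$ switchings ending at $\gG$ and sets of $s$ paths satisfying \refPP, combined with the fact that all such histories are equiprobable. The only addition is your explicit value $p_s=\frac{1}{(N-1)!!}(N-2)^{-s}$, which is a correct but inessential refinement of the paper's remark that each history with a given number of switchings has the same probability.
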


We project to graphs. For a simple graph $G$ and a configuration $\gG$
projecting to $G$, there is an obvious bijection between sets of paths in
$G$ and sets of paths in $\gG$. Note that the conditions
\refPG{} depend only on the paths in $G$,
while \ref{Plast} depends also on the specific configuration $\gG$.

Given a simple graph $G\in\GGn$, and a set $\cR$ of $s$
paths in $G$ that satisfy \refPG, let the \emph{weight}
$w(\cR;G)$ be the
probability that the lifting of $\cR$ to paths in
a configuration $\gG$, chosen uniformly at random among all
configuration $\gG$ that project to $G$,
satisfies also \ref{Plast}.
Furthermore, for $G\in\GGn$ and $s\ge0$, let
$\zetass(G)$ be the sum of the weights of all sets of $s$ paths in $G$ that
satisfy \refPG.

Recall that each simple graph $G\in\GGn$ is the projection of the same
number
$A:=\prod_i d_i!$
of configurations.
It follows that
the number of pairs $(\gG,\cR)$ where $\gG$ is a configuration projecting to
$G$ and $\cR$ is a set of $s$ paths in $\gG$ satisfying \refPP{}
equals
$\sum_{\cR} w(\cR;G)A=A\zetass(G)$.
Consequently,  
Claim \ref{ClgG} implies:
\begin{claim}\label{ClG}
  For a fixed $s\ge0$,
  and all $G\in\GGn$,
the
conditional probability $\P\bigpar{\hgndd=G\mid\cS_s}$
is proportional to
$\zetass(G)$.
\end{claim}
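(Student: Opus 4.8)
The plan is to deduce \refClaim{ClG} from \refClaim{ClgG} by projecting configurations down to graphs and summing conditional probabilities. Fix $s\ge0$; we may assume $\P(\cS_s)>0$, since otherwise the statement is vacuous. Because $\hgndd=G_S$ is by definition the projection of $\gG_S$, the event $\set{\hgndd=G}$ is the disjoint union, over all configurations $\gG$ projecting to $G$, of the events $\set{\gG_S=\gG}$, so $\P\bigpar{\hgndd=G\mid\cS_s}$ is the sum of $\P\bigpar{\gG_S=\gG\mid\cS_s}$ over those $\gG$. By \refClaim{ClgG} --- more precisely, by the fact used in its proof that every silver history with exactly $s$ switchings has one and the same probability --- there is a constant $c_s>0$, depending only on $s$ (and $n$), such that $\P\bigpar{\gG_S=\gG\mid\cS_s}$ equals $c_s$ times the number of sets of $s$ paths in $\gG$ that satisfy \refPP. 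Hence it remains to prove the purely combinatorial identity that, for each fixed $G\in\GGn$, the sum over all configurations $\gG$ projecting to $G$ of the number of sets of $s$ paths in $\gG$ satisfying \refPP{} equals $A\,\zetass(G)$.

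For this identity I would count the pairs $(\gG,\cR)$ in which $\gG$ is a configuration projecting to $G$ and $\cR$ is a set of $s$ paths in $\gG$ satisfying \refPP, and organise the count by the image $\cR_0$ of $\cR$ under the edge-by-edge bijection between the edges of $\gG$ and the edges of $G$; this bijection carries a set of paths in $\gG$ to a set of paths of the same lengths in $G$, and conversely. The key observation is that the conditions \refPG{} are properties of $G$ and $\cR_0$ alone, whereas \ref{Plast} also depends on the particular configuration $\gG$. Thus only sets $\cR_0$ satisfying \refPG{} can arise, and for each such $\cR_0$ the number of configurations $\gG$ projecting to $G$ for which the corresponding lift of $\cR_0$ additionally satisfies \ref{Plast} is exactly $w(\cR_0;G)\cdot A$ --- this is precisely the definition of the weight $w(\cR_0;G)$ as a probability over the uniform choice of $\gG$ among the $A$ configurations projecting to $G$. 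Summing over the admissible $\cR_0$ yields $A\sum_{\cR_0}w(\cR_0;G)=A\,\zetass(G)$, which is the desired identity.

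Putting the pieces together gives $\P\bigpar{\hgndd=G\mid\cS_s}=c_s A\,\zetass(G)$, which is proportional to $\zetass(G)$ with a proportionality constant that does not depend on $G$, and this is the claim. The only step that needs genuine care, as opposed to bookkeeping, is the clean splitting of the path conditions into the graph-level conditions \refPG{} and the configuration-level condition \ref{Plast}, together with verifying that the edge bijection between $\gG$ and $G$ matches paths with paths of the same type; I expect this to be the main (though modest) obstacle, everything else following routinely from the definitions of $w$ and $\zetass$ and from \refClaim{ClgG}.
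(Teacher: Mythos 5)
Your proposal is correct and follows essentially the same route as the paper: summing $\P\bigpar{\gG_S=\gG\mid\cS_s}$ over the $A=\prod_i d_i!$ configurations $\gG$ projecting to $G$, invoking \refClaim{ClgG} for a $G$-independent constant, and then counting pairs $(\gG,\cR)$ organised by the projected path set $\cR_0$, with the split of \refPG{} (graph-level) from \ref{Plast} (configuration-level) turning the count into $A\sum_{\cR_0}w(\cR_0;G)=A\,\zetass(G)$ exactly as in the paper's definition of the weights. No gaps.
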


Let $\hmuss$ be the distribution of $\hgndd$ conditioned on
$\cS_s$, \ie,
\begin{align}\label{murs}
  \hmuss\setG=\P\bigpar{\hgndd=G\mid\cS_{s}}.
\end{align}
Claim \ref{ClG} says that the probability $\hmuss(G)$
is  proportional to
$  \zetass(G)$,
and thus to $\zetass(G)\umu\setG$, recalling that $\umu$ is the uniform
distribution.
To find the normalizing constant, recall that
$\umu$ is the distribution of $\gndd$, which implies,
using the notation $\Zss:=\zetass\bigpar{\gndd}$,
\begin{align}
  \sum_{G\in\GGn}\zetass(G)\umu\setG
  =\E\bigsqpar{ \zetass\bigpar{\gndd}}
  =\E \Zss.
\end{align}
Consequently,
since $\hmuss$ is a probability measure, 
\begin{align}\label{bw}
  \hmuss\setG
  = \frac{\zetass(G)}{\E \Zss} \umu\setG.
\end{align}

\subsection{Golden constructions}\label{Sgolden}
For the proof of \refT{T1}, we simplify and consider only golden
constructions. In a golden construction, it follows from \ref{G1}--\ref{G3}
that the red paths are vertex-disjoint.
Conversely, a silver construction yielding vertex-disjoint red paths is golden.

It follows that Claims \ref{ClgG} and \ref{ClG} above hold also
if we replace $\cS_s$ by $\cG_s$ and consider only
sets $\cR$ of vertex-disjoint paths, so $\zetass(G)$ is replaced by
$\zetags(G)$, defined as the total weight of all sets of $s$ vertex-disjoint
paths in $G$ that satisfy
\refPG.
Thus, in analogy to \eqref{murs}--\eqref{bw},
letting $\hmugs$ be the distribution of $\hgndd$ conditioned on
$\cG_s$, and $\Zgs:=\zetags(\gndd)$,
\begin{align}\label{murg}
  \hmugs\setG:=\P\bigpar{\hgndd=G\mid\cG_{s}}
  = \frac{\zetags(G)}{\E \Zgs} \umu\setG.
\end{align}
Hence,
\begin{align}
  \normm{\hmugs-\umu}&
  =\sum_{G\in\GGn}\bigabs{\hmugs\setG-\umu\setG}
    =\sum_{G\in\GGn}\Bigabs{\frac{{\zetags(G)-\E \Zgs}}{\E \Zgs}}\umu\setG
       \notag\\&      =\frac{\E\bigabs{\Zgs-\E \Zgs}}{\E \Zgs}.
  \label{bob}
\end{align}
This will be studied in the following sections.
We first find the weights $w(\cR;G)$.

\begin{lemma}\label{Lw}
 Suppose that
$G\in\GGn$ and that
  $\cR$ consists of $\ell\ge0$ paths $\sPb$ and $m\ge0$ paths $\sPc$ in $G$,
  all vertex-disjoint.
  Then the weight $w(\cR;G)=2^{-m}$.
\end{lemma}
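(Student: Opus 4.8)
The claim is that the weight $w(\cR;G)=2^{-m}$ whenever $\cR$ consists of $\ell$ paths $\sPb$ and $m$ paths $\sPc$, all vertex-disjoint. Recall $w(\cR;G)$ is the probability that a uniformly random configuration $\gG$ projecting to $G$, with $\cR$ lifted to $\gG$ in the obvious way, satisfies condition \ref{Plast}. First I would observe that condition \ref{Plast} only constrains the lifting of the $\sPc$-paths: for a $\sPc$ with vertices $i,j,k,\ell$ and middle vertex sequence $i$--$j$--$k$--$\ell$, the constraint is that, with $j_\gam k_\gd$ the half-edges realizing the middle edge $jk$ and $j_\gb k_\eps$ the remaining half-edges at $j,k$ used by the two outer edges, the edge $j_\gam k_\gd$ comes \emph{after} $j_\gb k_\eps$ in the lexicographic order. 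A path $\sPb$ imposes no such condition at all, since \ref{Plast} mentions only red $\sPc$'s.

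**Independence across paths.** The key structural point is that because the paths in $\cR$ are vertex-disjoint, the half-edge assignments at the internal vertices of distinct paths involve disjoint sets of half-edges, and a uniformly random configuration projecting to $G$ induces independent uniformly random labelings of the half-edges at each vertex (more precisely: conditioning on $\gG$ projecting to $G$, the choice, at each vertex $v$, of which of $v$'s $d_v$ half-edges realizes which incident edge of $G$ is uniform and independent over $v$). Hence the event in \ref{Plast} factors as a product over the $m$ paths $\sPc$ of the corresponding local events, and it suffices to compute, for a single $\sPc$ with vertices $i$--$j$--$k$--$\ell$ in $G$, the probability that the induced labeling satisfies the lexicographic condition at the pair $(j,k)$.

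**The single-path computation.** Fix one $\sPc = i$--$j$--$k$--$\ell$. At vertex $j$, two of its half-edges are used by the edges $ij$ and $jk$; call them $j_\gb$ (for $ij$) and $j_\gam$ (for $jk$). In a uniformly random projection, the unordered pair of half-edges of $j$ assigned to $\{ij,jk\}$ is some fixed pair, but which one goes to $ij$ and which to $jk$ is a fair coin; similarly at $k$ between $jk$ and $k\ell$. So $j_\gam$ (the $jk$-half-edge at $j$) is one of two specified half-edges with probability $\tfrac12$, and independently $k_\gd$ is one of two with probability $\tfrac12$, and the two remaining half-edges are $j_\gb$ and $k_\eps$. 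The lexicographic comparison of the edge $\{j_\gam,k_\gd\}$ with the edge $\{j_\gb,k_\eps\}$ is a comparison of two distinct pairs of half-edges, and over the four equally likely joint choices of $(j_\gam,k_\gd)$ the induced pair $\{j_\gam,k_\gd\}$ takes values such that exactly half of the outcomes have $\{j_\gam,k_\gd\}$ after $\{j_\gb,k_\eps\}$ — this is the point to verify carefully: one checks that, holding the four half-edges at $j,k$ fixed and letting the coin flips vary, the ordering of the two edges is itself a fair coin. Granting that, each $\sPc$ contributes a factor $\tfrac12$, each $\sPb$ a factor $1$, and multiplying gives $w(\cR;G)=2^{-m}$.

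**Expected obstacle.** The only delicate point is the last one: making precise that the lexicographic comparison ``$j_\gam k_\gd$ after $j_\gb k_\eps$'' is distributed as a fair coin under the random labeling. The subtlety is that \ref{Plast} as stated compares the edges (i.e. pairs of half-edges) in a fixed global lexicographic order, so one must check that among the $2\times 2$ possible labelings the comparison splits $2$--$2$ and not, say, $1$--$3$; this follows because swapping the two half-edges of $j$ (which swaps $j_\gam \leftrightarrow j_\gb$) turns a valid configuration into an invalid one and vice versa, as it reverses exactly one coordinate of one of the two compared pairs — but spelling this out requires being careful about how the lexicographic order on pairs interacts with swapping a single entry. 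I would handle this by noting it suffices that the order on the relevant pairs is a total order respecting some ordering of the half-edges at $j$ and at $k$, and that the transposition at $j$ alone flips the comparison; independence then gives the factor $\tfrac12$ per $\sPc$.
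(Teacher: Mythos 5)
Your proof is correct and takes essentially the same route as the paper's: independence across the vertex-disjoint paths, coming from the independent uniform assignments of half-edges to incident edges at each vertex of the simple graph $G$, together with a symmetry argument at the two central vertices of each $\sPc$ giving probability $\tfrac12$ per $\sPc$ (and no constraint from the $\sPb$'s). One small refinement of your "expected obstacle": whether the transposition at $j$ \emph{alone} flips the comparison depends on how the (unspecified) order on pairs prioritizes the half-edges at $j$ versus $k$; the cleaner observation is that swapping the relevant half-edges at \emph{both} central vertices exchanges the two compared pairs and therefore flips any strict comparison, which gives the exact $2$--$2$ split for an arbitrary fixed total order, exactly as the paper's symmetry argument asserts.
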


\begin{proof}
  Given any configuration projecting to $G$, we obtain all other such
  configurations by permuting the half-edges at each vertex.
  Hence, if $H\cong\sPc$ is a path in $G$, then it follows by symmetry,
  permuting only the half-edges at the two central vertices of $H$,
  that the probability is $\frac12$
  that \ref{Plast} holds for the lift of $H$ to a random configuration $\gG$ 
  that projects to $G$. Furthermore, for disjoint $H_1,\dots,H_m$, the
  corresponding events are independent.
  The result follows.
\end{proof}

For $\ell,m\ge0$ and $G\in\GGn$, let $\zeta\lm(G)$ be the number of sets
$\set{F_1,\dots, F_\ell,\allowbreak H_1,\dots,H_m}$ of vertex-disjoint
paths in $G$ 
such that each $F_i\cong\sPb$ and each $H_j\cong\sPc$,
and \ref{Pgap2} holds.
(We ignore the order of $F_1,\dots,F_\ell$ and $H_1,\dots,H_m$.)
Note that  \ref{Pe}--\ref{Pgap1} holds for any set of vertex-disjoint paths.
Hence, \refL{Lw} yields
\begin{align}\label{alice}
  \zetags(G)=\sum_{\ell+m=s}2^{-m}\zeta\lm(G).
\end{align}

\begin{remark}
  The proof of \refL{Lw} is easily extended to the more general sets of
  paths in \refS{Ssilver}. If the paths $\sPc$ in $\cR$ are grouped
  according to their middle edges, with $m_k$ groups of $k-1$ paths having
  the same middle edge (and thus coming from switchings of an \xedge{k} in
  $\gG_0$), $k\ge2$,
  then $w(\cR;G)=\prod_k k^{-m_k}$.
  We will not use this formula, and omit the details.
\end{remark}

\section{Some subgraph counts in $\ggndd$}\label{Sgg}

The equations \eqref{bob} and \eqref{alice} show that it suffices to show
good estimates for the special subgraph counts
$\zeta\lm(\gndd)$.
In order to do so, we use the standard method
to study the random multigraph $\ggndd$ instead
(\cf{} \refS{S:intro}).
Let 
$\ZZ\lm:=\zeta\lm\bigpar{\ggndd}$.

For two multigraphs $H$ and $G$, let $\psi_H(G)$ be the number of subgraphs
of $G$ isomorphic to $H$.
Define
\begin{align}\label{trox}
  X_H:=\psi_H\bigpar{\ggndd}.
\end{align}
Note  that $L=X_{\sC_1}$ and $M=X_{\sC_2}$.
We are mainly interested in the case when $H=\sPb$
or $\sPc$, and we write $X_k:=X_{\sP_k}$.

We begin with an estimate that does not require \eqref{dmaxo}.

\begin{lemma}
  \label{LA0}
  Assume that\/ $\dd$ satisfies  \eqref{D2}. 
  Then, for every fixed $\ell,m\ge0$,
  \begin{align}
\E\bigpar{ \Xb^\ell \Xc^m} = O\bigpar{n^{\ell+m}}.
  \end{align}
\end{lemma}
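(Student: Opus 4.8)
The plan is to bound $\E\bigpar{\Xb^\ell\Xc^m}$ by expanding the product as a sum over ordered tuples of paths and estimating the probability that a fixed collection of edges is present in $\ggndd$. First I would write
\begin{align}
  \Xb^\ell\Xc^m=\sum_{(F_1,\dots,F_\ell)}\ \sum_{(H_1,\dots,H_m)}
  \prod_{i=1}^\ell \indic{F_i\subseteq\ggndd}\prod_{j=1}^m\indic{H_j\subseteq\ggndd},
\end{align}
where the outer sums range over ordered $\ell$-tuples of copies of $\sPb$ in $K_n$ (with vertices labelled) and ordered $m$-tuples of copies of $\sPc$. Taking expectations, the contribution of a fixed tuple is the probability that all the prescribed half-edge pairings occur in a uniformly random configuration; this is a ratio of double-factorials which, for a fixed total number of edges $t=2\ell+3m$ (or fewer, if the paths share edges), is $O\bigpar{N^{-t}}$ times a product of falling factorials of the relevant $d_i$'s. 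Thus each term is at most $C\prod d_{v}^{(\text{deg in union})}/N^{t}$.

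The next step is to sum over the tuples. The ``generic'' term comes from $\ell+m$ vertex-disjoint paths using $3\ell+4m$ distinct vertices; summing the bound $C\,\prod_{\text{endpoints}}d_v\cdot\prod_{\text{midpoints}}d_v^2 / N^{2\ell+3m}$ over all choices of vertices factorizes, and using $\sum_i d_i=N=\Theta(n)$ and $\sum_i d_i^2=O(n)$ from \eqref{D2} one gets a bound of order
\begin{align}
  \frac{N^{\ell+2m}\cdot\bigpar{\sum_i d_i^2}^{\,\ell+2m}}{N^{2\ell+3m}}
  =O\Bigpar{\frac{n^{\ell+2m}\cdot n^{\ell+2m}}{n^{2\ell+3m}}}=O\bigpar{n^{\ell+m}},
\end{align}
after accounting for which vertices are midpoints versus endpoints (each $\sPb$ contributes one midpoint and two endpoints, each $\sPc$ two midpoints and two endpoints). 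The degenerate terms — where some of the paths coincide, share vertices, or overlap in edges — involve strictly fewer distinct vertices, hence fewer factors of $n$, and I would check that they are all $O\bigpar{n^{\ell+m-1}}$ or smaller; there are only $O(1)$ combinatorial ``patterns'' of overlap for fixed $\ell,m$, so it suffices to verify the generic pattern dominates. An equivalent and perhaps cleaner bookkeeping device is to note $\E X_{\sPb}=O(n)$ and $\E X_{\sPc}=O(n)$ directly, and then handle the mixed moments by the same kind of configuration-counting with the standard observation that higher moments of subgraph counts in the configuration model are, under \eqref{D2}, of the order suggested by treating the indicators as roughly independent.

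The main obstacle I expect is purely bookkeeping: controlling the degenerate overlap terms uniformly. Because \eqref{dmaxo} is \emph{not} assumed here, a single vertex can carry degree up to $\Theta(n\qq)$, so a naive bound that pulls out a factor $\dmax$ for a shared midpoint could cost $n\qq$ rather than being absorbed; one must be careful that whenever two paths share a vertex the loss of a free vertex-summation (a factor $n$) strictly outweighs any such $\dmax$ gain, using $\dmax=O(n\qq)$ and $\sum d_i^2=O(n)$ together. Since we only need an $O$-bound (not the sharp constant), it is enough to show the generic disjoint configuration gives $O(n^{\ell+m})$ and every coincidence loses at least one power of $n$ net; this is a finite check over the finitely many patterns for fixed $\ell,m$.
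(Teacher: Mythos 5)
Your overall strategy---expand $\E\bigpar{\Xb^\ell\Xc^m}$ over ordered tuples of paths, bound the probability of each union pattern in the configuration model, and do a finite check over overlap patterns---is workable, and it is genuinely different from the paper's proof. The paper avoids enumerating overlap patterns of paths altogether: it bounds $\Xb\le\sum_i\binom{d_i}2=O(n)$ \emph{deterministically}, so the factor $\Xb^\ell$ costs nothing, and it overcounts $\Xc\le\XXc:=\sum_{i<j}\sum_{\ga,\gb}(d_i-1)(d_j-1)\Iiajb$ (counting each $\sPc$ by its middle edge), proving $\E\XXc^m=O(n^m)$ by induction on $m$: terms of the expansion with a repeated indicator are bounded by $\dmax^2\,\E\XXc^{m-1}\le Cn\,\E\XXc^{m-1}$ using $\dmax=O(n\qq)$, and terms with distinct indicators by $C(\E\XXc)^m$. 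The only ``overlap'' that route ever has to discuss is equality of two middle edges, which is why it is short.

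There is, however, a genuine problem with one step as you state it: the claim that \emph{every} coincidence between paths loses at least one power of $n$. Since \eqref{dmaxo} is not assumed here, $\dmax$ may be of order $n\qq$, and then overlapping patterns can contribute at the full order $n^{\ell+m}$. Concretely, take $d_1=d_2=\floor{\sqrt{an}}$ and all other degrees $1$ (essentially \refE{EO}): every copy of $\sPc$ in $\ggndd$ has midpoints $1$ and $2$, $\Xc\approx d_1d_2M_{12}=\Theta(n)M_{12}$ with $M_{12}\dto\Po(a)$, and in $\E\Xc^2$ the pairs of paths sharing both midpoints (even the whole middle edge) contribute $\Theta(n^2)$---the same order as the target $n^{\ell+m}$ with $\ell=0$, $m=2$, not $O(n)$. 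So the finite check you propose would not verify the property you state. The fix is to aim only for the weaker, sufficient statement that each overlap pattern contributes $O(n^{\ell+m})$: for a union multigraph $H$ with vertex degrees $\gd_1,\dots,\gd_q$, the expected number of copies is at most $C\prod_k n^{-\gd_k/2}\sum_i d_i^{\gd_k}$, and under \eqref{D2} alone each factor is $O(n\qq)$ when $\gd_k=1$ and $O(1)$ when $\gd_k\ge2$ (since $\sum_i d_i^{\gd}\le\dmax^{\gd-2}\sum_i d_i^2=O(n^{\gd/2})$); as the union of $\ell+m$ paths has at most $2(\ell+m)$ vertices of degree one, every pattern is $O(n^{\ell+m})$, and there are $O(1)$ patterns. (This is essentially the bookkeeping the paper uses later, in the proof of \refL{LA2}, where \eqref{dmaxo} \emph{is} available to make overlaps lower order.) Finally, a small slip in your generic computation: each $\sPb$ and each $\sPc$ has two endpoints, so the endpoint factor should be $N^{2\ell+2m}$, not $N^{\ell+2m}$; as written your display evaluates to $O(n^{m})$ rather than $O(n^{\ell+m})$.
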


\begin{proof}
  First, deterministically using \eqref{D2},
  since there are at most $\binom{d_i}2$ copies of
  $\sPb$ with middle vertex $i$, 
  \begin{align}\label{fru}
    \Xb\le \sumin \binom{d_i}2
    =O(n).
  \end{align}

We estimate $\Xc$ too from above by overcounting.
For $i,j\in[n]$ and $\ga\in[d_i]$, $\gb\in[d_j]$, let $\Iiajb$ be the
indicator of the event that the half-edges $i_\ga$ and $j_\gb$ form an edge.
Then
\begin{align}\label{XXc}
  \Xc\le\XXc:=\sum_{i<j}\sum_{\ga=1}^{d_i}\sum_{\gb=1}^{d_j} (d_i-1)(d_j-1)\Iiajb.
\end{align}
We show, by induction on $m$, that for every fixed $m\ge0$,
\begin{align}\label{glu}
  \E\XXc^m =O\bigpar{ n^m}.
\end{align}
This is trivial for $m=0$, and for $m=1$ we have (\cf{} the similar \eqref{EM})
\begin{align}\label{EXXc}
  \E\XXc
&  =  \sum_{i<j}\sum_{\ga=1}^{d_i}\sum_{\gb=1}^{d_j} (d_i-1)(d_j-1)\E\Iiajb
    =  \sum_{i<j}\frac{d_i(d_i-1)d_j(d_j-1)}{N-1}
           \notag\\&
  \le \frac{C}{n}\lrpar{ \sumin d_i(d_i-1)}^2
  =O(n).
\end{align}

For the induction step,  note that  \eqref{XXc} yields the expansion
\begin{align}\label{XXcm}
  \E\XXc^m=\sum_{i_1<j_1,\dots,i_m<j_m}\sum_{\ga_1,\dots,\ga_m}\sum_{\gb_1,\dots,\gb_m}
\E  \prod_{k=1}^m (d_{i_k}-1)(d_{j_k}-1) I_{i_k,\ga_k,j_k,\gb_k}.
\end{align}
First, let $1\le k<\ell\le m$, and consider all terms in \eqref{XXcm} where
$\xpar{i_k,\ga_k,j_k,\gb_k}=\xpar{i_\ell,\ga_\ell,j_\ell,\gb_\ell}$.
In these terms, $I_{i_\ell,\ga_\ell,j_\ell,\gb_\ell}$ is redundant, and
$(d_{i_\ell}-1)(d_{j_\ell}-1)<\dmax^2\le Cn$, and eliminating these factors
yields $\E\XXc^{m-1}$. Hence, the induction hypothesis shows that the
contribution of these terms is at most
$ Cn\E\XXc^{m-1}=O(n^m)$.

Summing over all pairs $(k,\ell)$ still yields $O(n^m)$.

The remaining terms in \eqref{XXcm} are those where the $m$ quadruples
$\xpar{i_k,\ga_k,j_k,\gb_k}$ are distinct. In this case,
either
\begin{align}
  \E \prod_{k=1}^m I_{i_k,\ga_k,j_k,\gb_k}
&  =\frac{1}{(N-1)(N-3)\dotsm(N-2m+1)}
  \\&
  \le \frac{C}{(N-1)^m}
  = C \prod_{k=1}^m \E I_{i_k,\ga_k,j_k,\gb_k},
\end{align}
(where $C=C_m$ depends on $m$),
or $\prod_{k=1}^m I_{i_k,\ga_k,j_k,\gb_k}=0$ identically
because of conflicts.
Consequently, the sum of these terms in \eqref{XXcm} is at most
\begin{align}\label{XXcm2}
\sum_{i_1<j_1,\dots,i_m<j_m}\sum_{\ga_1,\dots,\ga_m}\sum_{\gb_1,\dots,\gb_m}
C  \prod_{k=1}^m (d_{i_k}-1)(d_{j_k}-1) \E I_{i_k,\ga_k,j_k,\gb_k}
  = C \bigpar{\E\XXc}^m.
\end{align}
By \eqref{EXXc}, this too is $O(n^m)$, which completes the induction step
and proves \eqref{glu}.
The result follows by \eqref{fru} and \eqref{glu}.
\end{proof}

\begin{lemma}
  \label{LA1}
  Assume that\/ $\dd$ satisfies  \eqref{D2lim} and \eqref{dmaxo}.
  Then
  \begin{align}\label{la1b}
\frac{\Xb}{n}&\pto \frac{\nu}{2},
\\ \label{la1c}  
          \frac{\Xc}{n}&\pto \frac{\nu^2}{2\mu}.
  \end{align}
\end{lemma}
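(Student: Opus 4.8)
The plan is to compute the first and second moments of $\Xb$ and $\Xc$ and show that in each case $\Var$ is $o(n^2)$ while $\E$ is $\sim cn$, so that the normalized variables concentrate. By \refL{LA0} (with $\ell=2,m=0$ and $\ell=0,m=2$) we already have $\E\Xb^2=O(n^2)$ and $\E\Xc^2=O(n^2)$, so the second moments are finite of the right order; what remains is to pin down the leading constants. For the means, the computation is the standard configuration-model subgraph count. An edge $i_\ga j_\gb$ is present with probability $1/(N-1)$, so
\begin{align}
  \E\Xb=\sum_i\binom{d_i}{2}\cdot\frac{?}{?}\ \longrightarrow\ \text{leading term}\ \frac{1}{2}\sum_i d_i(d_i-1)\cdot\frac{1}{N-1}\cdot(\text{count of ordered edge pairs at }i),
\end{align}
and carrying this out carefully, a copy of $\sPb$ with centre $i$ is specified by choosing two of the $d_i$ half-edges at $i$ and matching each to a half-edge elsewhere; this gives $\E\Xb=\tfrac12\sum_i d_i(d_i-1)\bigpar{1+O(1/N)}=\tfrac{n}{2}\bigpar{\nu+o(1)}$ using \eqref{nu}. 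Similarly a copy of $\sPc$ is an ordered pair of adjacent $\sPb$-type attachments, and the same reasoning gives $\E\Xc=\tfrac12\Bigpar{\sum_i d_i(d_i-1)}^2/N\cdot\bigpar{1+o(1)}=\tfrac{n}{2}\cdot\frac{\nu^2}{\mu}\bigpar{1+o(1)}$, again by \eqref{nu} and $N/n\to\mu$.

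For the variances I would argue that $\E\Xb^2$ and $\E\Xc^2$ have the same leading term as $(\E\Xb)^2$ and $(\E\Xc)^2$ respectively. In the expansion of $\E\Xb^2$ (resp.\ $\E\Xc^2$) the dominant contribution comes from pairs of copies that are vertex-disjoint and use distinct half-edges; the joint presence probability for two disjoint $\sPb$'s factorizes up to $1+O(1/N)$, exactly as in the proof of \refL{LA0} where this step produced $C(\E\XXc)^m$. The remaining terms — where the two copies share a vertex, an edge, or a half-edge — are lower order: each coincidence forces a factor like $d_i^2\le\dmax^2=o(n)$ in place of a free sum of size $\Theta(n)$, so these terms contribute $o(n^2)$. (This is precisely where \eqref{dmaxo} enters, just as in \refL{LSG}\ref{LSG:G}.) Hence $\Var\Xb=\E\Xb^2-(\E\Xb)^2=o(n^2)$ and likewise for $\Xc$, so $\Xb/n\pto\nu/2$ and $\Xc/n\pto\nu^2/(2\mu)$ by Chebyshev.

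The main obstacle is the bookkeeping in the second-moment computation for $\Xc$: there are more ways for two copies of $\sPc$ to overlap (sharing the middle edge, sharing an endpoint, sharing one pendant edge, etc.) than for $\sPb$, and one must check that every overlap type is genuinely lower order. I expect this to go through by the same mechanism as in \refL{LA0} — replacing a free vertex-sum by $\dmax^2$ each time half-edges are identified — but it requires enumerating the cases. A cleaner alternative, which I would mention, is to avoid the full second-moment bound by instead writing $\Xc$ in terms of $\XXc$ from \eqref{XXc}: one shows $\XXc-\Xc=\op(n)$ (the difference counts degenerate configurations, of expected size $O(1)$ or $o(n)$) and that $\XXc/n\pto\nu^2/(2\mu)$ directly, since $\XXc$ is a sum of indicators whose pairwise correlations are controlled exactly as in the induction step of \refL{LA0}. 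Either route reduces everything to the estimates already established there together with \eqref{nu}.
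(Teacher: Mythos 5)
Your proposal is correct, and your ``cleaner alternative'' for $\Xc$ is in fact the paper's actual proof: the paper works with $\XXc$ from \eqref{XXc}, shows $0\le\XXc-\Xc\le 2\dmax^2X_{\sC_1}+4\dmax X_{\sC_2}+3X_{\sC_3}=\op(n)$ as in \eqref{thr}, computes $\E\XXc$ exactly, and bounds $\Var\XXc$ via the exact covariances of the indicators $\Iiajb$, obtaining $\Var\XXc\le\dmax^2\E\XXc+\tfrac{C}{n}\bigpar{\E\XXc}^2=o(n^2)$ in \eqref{XXc2}, after which Chebyshev gives \eqref{la1c}. Your primary route, a direct second-moment computation for $\Xb$ and $\Xc$ with a classification of overlapping pairs of paths, also works: the critical overlap types are a shared centre for two copies of $\sPb$ (contribution at most $\sum_i d_i^4\le\dmax^2\sumin d_i^2=o(n^2)$) and a shared middle edge for two copies of $\sPc$ (contribution $O\bigpar{(\sum_i d_i^3)^2/N}=O\bigpar{\dmax^2 n}=o(n^2)$), so \eqref{dmaxo} enters exactly where you predict; but this is precisely the bookkeeping you flag as the obstacle, and the point of the $\XXc$ device is to avoid it, since $\XXc$ is a plain sum of edge indicators whose covariances can be written down exactly. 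For $\Xb$ the paper is simpler still and uses no variance argument at all: deterministically $0\le\sumin\binom{d_i}{2}-\Xb\le 2\dmax X_{\sC_1}+2X_{\sC_2}$ by \eqref{gal}, and since $L,M=\Op(1)$ by \eqref{EL}--\eqref{EM} and $\dmax=o(n\qq)$, this sandwich gives $\Xb/n=\frac1n\sumin\binom{d_i}{2}+\op(1)$ at once, so your second-moment computation for $\Xb$, while valid, is unnecessary work. Two small points: state the mean asymptotics additively, $\E\Xb=\tfrac{\nu}{2}n+o(n)$ and $\E\Xc=\tfrac{\nu^2}{2\mu}n+o(n)$, rather than with a multiplicative $1+o(1)$, since \eqref{D2lim} permits $\nu=0$; and note that pinning down $\E\Xc$ already requires the same degenerate-configuration corrections (loops, multiple edges, triangles, and the diagonal term $\sum_i d_i^2(d_i-1)^2=o(n^2)$) that you defer to the variance step.
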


\begin{proof}
First, the overcount in \eqref{fru} comes from the loops and multiple edges,
  and we have the
estimate
\begin{align}\label{gal}
  0\le\sumin\binom{d_i}2-\Xb
  \le
  2\dmax X_{\sC_1}+ 2 X_{\sC_2}
  =  2\dmax L+2M
.
\end{align}
By \eqref{EL}--\eqref{EM},
$L,M=\Op(1)$, and thus \eqref{gal} yields, using \eqref{dmaxo},
\begin{align}
  \frac{\Xb}{n}
  =\frac{1}{n}\sumin\binom{d_i}2 +\op\bigpar{n\qqw}
    =\frac{1}{n}\sumin\binom{d_i}2 +\op(1).
\end{align}
Hence, 
\eqref{la1b} follows by \eqref{nu}.

For $\Xc$, we consider again $\XXc$ defined in \eqref{XXc}.
This too is defined by overcounting, and it is easily seen that
\begin{align}\label{thr}
  0\le\XXc-\Xc
  \le   2\dmax^2 X_{\sC_1}+4\dmax X_{\sC_2} +  3 X_{\sC_3}.
\end{align}
Again, $X_{\sC_1}=L=\Op(1)$ and $X_{\sC_2}=M=\Op(1)$ by \eqref{EL}--\eqref{EM}, and
a similar calculation shows $\E X_{\sC_3}=O(1)$
and thus $X_{\sC_3}=\Op(1)$.
Furthermore, $\dmax=o(n\qq)$ by \eqref{dmaxo}.
Hence,  \eqref{thr} implies
\begin{equation}
  \label{xru}
  \XXc-\Xc=\op(n).
\end{equation}

Consequently, it suffices to consider $\XXc$.
We have, by \eqref{EXXc},
\begin{align}\label{ser}
  \E\XXc
  = \frac{1}{2(N-1)}\lrpar{\biggpar{\sumin d_i(d_i-1)}^2-
  \sumin d_i^2(d_i-1)^2}.
\end{align}
Furthermore, by \eqref{dmaxo} and \eqref{D2},
\begin{align}\label{cys}
    \sumin d_i^2(d_i-1)^2\le \dmax^2\sumin d_i^2 =o(n^2).
\end{align}
Using \eqref{cys} and \eqref{nu} in \eqref{ser} yields
\begin{align}\label{met}
 \frac{ \E\XXc}{n}\to\frac{\nu^2}{2\mu}.
\end{align}

Finally, we estimate the variance of $\XXc$.
We use again the representation \eqref{XXc}.
We have the covariances
\begin{align}
  \Cov\bigpar{I_{i,\ga,j,\gb},I_{k,\gam,\ell,\gd}}
  =
  \begin{cases}
        \frac{1}{N-1}-\bigpar{\frac{1}{N-1}}^2,
    & \set{i_\ga,j_\gb}=\set{k_\gam,\ell_\gd},
    \\
    -\bigpar{\frac{1}{N-1}}^2,
    & |\set{i_\ga,j_\gb}\cap\set{k_\gam,\ell_\gd}|=1,
    \\
    \frac{1}{(N-1)(N-3)}-\bigpar{\frac{1}{N-1}}^2,
    & |\set{i_\ga,j_\gb}\cap\set{k_\gam,\ell_\gd}|=0.
  \end{cases}
\end{align}
Hence, whenever $\set{i_\ga,j_\gb}\neq\set{k_\gam,\ell_\gd}$,
\begin{align}
  \Cov\bigpar{I_{i,\ga,j,\gb},I_{k,\gam,\ell,\gd}}
  \le    \frac{2}{(N-1)^2(N-3)}
  \le \frac{C}{n} \E\Iiajb \E I_{k,\gam,\ell,\gd},
\end{align}
and it follows from \eqref{XXc} that, using \eqref{EXXc} and \eqref{dmaxo},
\begin{align}\label{XXc2}
  \Var\XXc
  &\le
    \sum_{i<j}\sum_{\ga=1}^{d_i}\sum_{\gb=1}^{d_j} (d_i-1)^2(d_j-1)^2\E\Iiajb
    +\frac{C}{n} \bigpar{\E\XXc}^2
    \notag\\&
  \le \dmax^2\E\XXc  +\frac{C}{n} \bigpar{\E\XXc}^2
  =O\bigpar{n\dmax^2}+O\bigpar{n}
  =o\bigpar{n^2}.
\end{align}
Consequently, $(\XXc-\E\XXc)/n\pto0$, which together with
\eqref{met} implies
\begin{equation}
  \label{xyl}
    \frac{\XXc}{n}\pto\frac{\nu^2}{2\mu}.
\end{equation}
Finally, this and \eqref{xru} imply \eqref{la1c}.
\end{proof}

\begin{remark}
  \refL{LA1} may fail without the assumption \eqref{dmaxo}.
  For an example, consider again \refE{EO}. Then
  $\XXc=(m-1)^2M_{12}$ and by an estimate similar to \eqref{thr},
  $\Xc=(m-1)^2 M_{12}+\op(n)$;
  hence $\Xc/n=aM_{12}+\op(1)\dto a\Po(a)$. Thus $\Xc/n$ does not converge
 in probability  to a constant.
\end{remark}

\begin{remark}
  Under the stronger assumption $\sum_i d_i^m = O(n)$ for every $m<\infty$,
  it is shown in \cite[Theorem 3.10]{SJ338} that (for example)
  $\Xb$ and $\Xc$ are asymptotically normal, with variance of order $n$.
  We do not know whether that holds under the weaker assumptions in \refL{LA1}.
\end{remark}

\begin{lemma}
  \label{LA2}
  Assume that\/ $\dd$ satisfies  \eqref{D2lim} and \eqref{dmaxo}.
  Then, for any $\ell,m\ge0$,
  \begin{align}\label{la2}
    n^{-\ell-m}\ZZ\lm &\pto
\ga\lm:=\frac{1}{\ell!\,m!}\Bigparfrac{\nu}{2}^\ell
\Bigpar{\frac{\nu^2}{2\mu}}^m.
  \end{align}
\end{lemma}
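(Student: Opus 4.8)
The plan is to show that $\ZZ\lm=\frac1{\ell!\,m!}\Xb^\ell\Xc^m+\op\bigpar{n^{\ell+m}}$; since \refL{LA1} gives $\Xb/n\pto\nu/2$ and $\Xc/n\pto\nu^2/(2\mu)$, this immediately yields $n^{-\ell-m}\ZZ\lm\pto\frac1{\ell!\,m!}\bigpar{\nu/2}^\ell\bigpar{\nu^2/(2\mu)}^m=\ga\lm$.

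First I would remove the constraint \ref{Pgap2}. Let $X_2'$ and $X_3'$ be the numbers of copies of $\sPb$, respectively $\sPc$, in $\ggndd$ whose two endvertices are non-adjacent. A copy of $\sPb$ with adjacent endvertices is a triangle with a marked central vertex, and a copy of $\sPc$ with adjacent endvertices spans a $4$-cycle, so $\Xb-X_2'\le3X_{\sC_3}$ and $\Xc-X_3'\le4X_{\sC_4}$. Since $\E X_{\sC_3}=O(1)$ (already used in the proof of \refL{LA1}) and $\E X_{\sC_4}=O(1)$ (an entirely analogous first-moment computation, needing only \eqref{D2}), both differences are $\Op(1)=\op(n)$, so $X_2'=\Xb+\op(n)$ and $X_3'=\Xc+\op(n)$; hence $X_2'/n\pto\nu/2$ and $X_3'/n\pto\nu^2/(2\mu)$ by \refL{LA1}.

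Next, $\ZZ\lm$ counts the unordered families of $\ell$ paths counted by $X_2'$ and $m$ paths counted by $X_3'$ that are pairwise vertex-disjoint. Ignoring disjointness gives $\ZZ\lm\le\binom{X_2'}{\ell}\binom{X_3'}{m}$, while restoring it costs at most the number of such families in which some two of the chosen paths share a vertex, which is $O\bigl(W\,(\Xb+\Xc)^{\ell+m-2}\bigr)$ (absent when $\ell+m<2$), where $W$ is the number of ordered pairs of paths, each a $\sPb$ or a $\sPc$, that share at least one vertex. Since $\Xb,\Xc=\Op(n)$ by \refL{LA1} (or \refL{LA0}), we have $\binom{X_2'}{\ell}\binom{X_3'}{m}=\frac1{\ell!\,m!}(X_2')^\ell(X_3')^m+\op(n^{\ell+m})=\frac1{\ell!\,m!}\Xb^\ell\Xc^m+\op(n^{\ell+m})$, so the whole argument comes down to proving $W=\op(n^2)$; this is where I expect the real work to be.

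To control $W$, split it according to the isomorphism type of the union $H$ of the two paths. This union is a connected (multi)graph with $v(H)\le7$ vertices and $e(H)\le6$ edges (counted with multiplicity), so $e(H)\ge v(H)-1$, and $W$ is a sum of finitely many subgraph counts $X_H$. The standard first-moment estimate for the configuration model (of the type used in the proofs of \refL{LA0} and \refL{LA1}) gives $\E X_H\le C\,n^{v(H)-e(H)}\dmax^{\,p(H)}$, where $p(H)=2e(H)-2v(H)+\ell_1$ and $\ell_1\le4$ is the number of degree-one vertices of $H$ (the powers of $\dmax$ come from $\sum_i(d_i)_k\le\dmax^{(k-2)_+}\sum_i d_i^2$, applied at any vertex of $H$ of degree $k\ge3$ produced by the gluing). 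Using $e(H)\ge v(H)-1$, $\ell_1\le4$, and the strict bound $\dmax=o(n\qq)$, one checks in each of the finitely many cases that $n^{v(H)-e(H)}\dmax^{\,p(H)}=o(n^2)$ (for a pair of two copies of $\sPb$ even a crude deterministic count suffices, since at most $O(d_i\dmax)$ copies of $\sPb$ pass through a vertex $i$, so such pairs contribute $O\bigpar{\dmax^2\sum_i d_i^2}=o(n^2)$). Hence $\E W=o(n^2)$, so $W=\op(n^2)$ by Markov's inequality, giving $\ZZ\lm=\frac1{\ell!\,m!}\Xb^\ell\Xc^m+\op(n^{\ell+m})$, and dividing by $n^{\ell+m}$ and applying \refL{LA1} yields \eqref{la2}. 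The main obstacle is precisely this last verification: the borderline cases are those in which the gluing creates high-degree vertices, and they are controlled only because $\dmax=o(n\qq)$ is a \emph{strict} little-$o$.
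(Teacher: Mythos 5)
Your proposal is correct and follows essentially the same route as the paper's proof: approximate $\ZZ\lm$ by $\frac{1}{\ell!\,m!}\Xb^\ell\Xc^m$, absorb the non-adjacency constraint \ref{Pgap2} into $\Op(1)$ cycle counts $X_{\sC_3},X_{\sC_4}$, control pairs of intersecting paths by decomposing over the finitely many isomorphism types of the union and applying the first-moment bound $\E X_H\le Cn^{-e(H)}\prod_k\gD_{\gd_k}$ together with $\dmax=o(n\qq)$ (the paper's $A_{jk}=\op(n^2)$ is your $W=\op(n^2)$, and its $B_2,B_3$ terms are your passage from $\Xb,\Xc$ to $X_2',X_3'$), and finish with \refL{LA1}. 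The only differences are cosmetic bookkeeping, e.g.\ your exponent formula $p(H)=2e(H)-2v(H)+\ell_1$ versus the paper's per-degree estimate and case analysis on the number of degree-one vertices, which amount to the same verification.
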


\begin{proof}
  $\ell!\,m!\,\ZZ\lm$ counts ordered sequences of subgraphs
  $F_1,\dots, F_\ell,\allowbreak H_1,\dots,H_m$ of vertex-disjoint
paths in $\ggndd$ 
such that each $F_i\cong\sPb$ and each $H_j\cong\sPc$,
and \ref{Pgap2} holds.
We may overcount and estimate this by $\Xb^\ell \Xc^m$;
we can also estimate the error by
\begin{multline}\label{trp}
  0 \le \Xb^\ell\Xc^m - \ell!\,m!\,\ZZ\lm
  \\
  \le \binom\ell2 A_{22}\Xb^{\ell-2}\Xc^{m}
  +\ell m  A_{23}\Xb^{\ell-1}\Xc^{m-1} + \binom{m}2 A_{33}\Xb^{\ell}\Xc^{m-2}
\\  + \ell B_2 \Xb^{\ell-1}\Xc^m   + m B_3 \Xb^{\ell}\Xc^{m-1},
\end{multline}
where $A_{jk}$ is that number of pair of paths $F\cong\sP_j$ and $F'\cong\sP_k$
in $\ggndd$ such that $F\cap F'\neq\emptyset$,
and $B_j$ is the number of paths $\sP_j$ such that \ref{Pgap2} does not
hold, \ie{} paths $\sP_j$ that are part of a cycle $\sC_{j+1}$.

We estimate $A_{jk}$ and $B_j$. First,
we have $B_2\le 3X_{\sC_3}$ and $B_3\le 4X_{\sC_4}$.
Calculations similar to \eqref{EM} show $\E X_{\sC_j}=O(1)$ for any fixed
$j$ (as said in the proof of \refL{LA1} for $j=3$),
and thus
\begin{align}\label{ala}
  B_2&=\Op(1),& B_3&=\Op(1).
\end{align}

Fix $j,k\in\set{2,3}$.
We make a decomposition
\begin{align}\label{val}
  A_{jk}=\sum_{H\in\cH} A_{jk}(H),
\end{align}
where
$A_{jk}(H)$ is the number of pairs $(F,F')$ of paths in $\ggndd$ such that
$F\cap F'\neq\emptyset$, $F\cong \sP_j$, $F'\cong\sP_k$ and $F\cup F'\cong H$,
and $\cH$ is the (finite) set of unlabelled multigraphs that can be written
as a union of two paths of lengths $j$ and $k$.
Given $F\cup F'$, there is $O(1)$ choices of $F$ and $F'$, and thus
\begin{align}\label{gly}
  A_{ij}(H) \le C X_H.
\end{align}

For $r\ge0$, let
\begin{align}\label{Dr}
\gD_r:=\sumin d_i^r.  
\end{align}
Let $H\in\cH$ have $q$ vertices with degrees $\gd_1,\dots,\gd_q$.
Then the number of possible copies of $H$ in $\ggndd$, counted in the
corresponding configuration, is at most
\begin{align}
  \sum_{i_1,\dots,i_q} \prod_{k=1}^q d_{i_k}^{\gd_k}
  =\prod_{k=1}^q \gD_{\gd_k},
\end{align}
and 
each such copy occurs with probability $O(N^{-e(H)})=O(n^{-e(H)})$.
Since  $e(H)=\frac12\sum_k \gd_k$, 
\begin{align}\label{tyr}
  \E X_H\le C n^{-e(H)}\prod_{k=1}^q \gD_{\gd_k}
  = C \prod_{k=1}^q n^{-\gd_k/2}\gD_{\gd_k}.
\end{align}
By \eqref{D2}, $\gD_1=O(n)$ and $\gD_2=O(n)$.
Furthermore, for $\gd>2$, $\gD_\gd\le\dmax^{\gd-2}\gD_2\le C\dmax^{\gd-2}n$.
Hence, using \eqref{dmaxo},
\begin{align}\label{phe}
  n^{-\gd/2}\gD_\gd
  \le
  \begin{cases}
    C n\qq, & \gd=1,\\
    C , & \gd=2,\\
            C\dmax^{\gd-2} n^{1-\gd/2}=o(1) , & \gd>2.
  \end{cases}
\end{align}

Let $h_1$ be the number of vertices in $H$ with degree 1.
Since $H$ is a connected union of two paths,
$H$ has no isolated vertices, and it follows from
\eqref{tyr} and \eqref{phe} that $\E X_H=O(n^{h_1/2})$.
Furthermore,
$h_1\le 4$, and if $h_1=4$, then
there is some vertex with degree $>2$.
If $h_1\le3$, then $\E X_H=O\bigpar{n^{3/2}}$, and if $h_1=4$ and
some vertex has degree $\gd>2$, then
\eqref{tyr} and \eqref{phe} imply
$\E X_H \le C n^{4/2}(\dmax/n\qq)^{\gd-2}=o\bigpar{n^2}$.

Hence, $\E X_H=o\bigpar{n^2}$ for every $H\in\cH$, and thus
$\E A_{jk}(H)=o\bigpar{n^2}$ by \eqref{gly}, and finally
$\E A_{jk}=o\bigpar{n^2}$ by \eqref{val},
which implies
\begin{align}
  \label{ile}
A_{jk}=\op\bigpar{n^2}.
\end{align}

Since \refL{LA1} implies $\Xb,\Xc\le Cn$ \whp, 
it follows from \eqref{ala} that the last two terms in \eqref{trp} are
$\Op\bigpar{n^{\ell+m-1}}$, and from \eqref{ile} that the remaining terms on the
\rhs{} are $\op\bigpar{n^{\ell+m}}$.
Consequently, \eqref{trp} implies
\begin{align}\label{trpq}
 \Xb^\ell\Xc^m - \ell!\,m!\,\ZZ\lm
 =\op\bigpar{n^{\ell+m}}
\end{align}
and thus
\begin{align}\label{trpr}
  n^{-\ell-m}\ZZ\lm
  =\frac{1}{\ell!\,m!}\parfrac{\Xb}{n}^\ell\parfrac{\Xc}{n}^m +\op(1)
\end{align}
and the result \eqref{la2} follows from \refL{LA1}.
\end{proof}

\begin{lemma}
  \label{LA3}
  Assume that\/ $\dd$ satisfies  \eqref{D2lim} and \eqref{dmaxo}.
  Let $\ell,m\ge0$. Then,
  \begin{align}
    \label{la3}
        \E|\ZZ\lm-\ga\lm n^{\ell+m}|    =o\bigpar{n^{\ell+m}}.
  \end{align}
\end{lemma}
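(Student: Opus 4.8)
The plan is to deduce \refL{LA3} from \refL{LA2} by upgrading the convergence in probability established there to convergence in $L^1$. Since $\ga\lm$ is a constant, the assertion \eqref{la3} is exactly the statement that $n^{-\ell-m}\ZZ\lm\to\ga\lm$ in $L^1$, and \refL{LA2} already gives $n^{-\ell-m}\ZZ\lm\pto\ga\lm$. Hence it suffices to show that the family $\set{n^{-\ell-m}\ZZ\lm}_n$ is uniformly integrable, since convergence in probability together with uniform integrability implies convergence in $L^1$; and for uniform integrability it is enough to bound the family in $L^2$.

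The required $L^2$ bound is immediate from \refL{LA0}. Recall from the proof of \refL{LA2} that $\ell!\,m!\,\ZZ\lm$ counts ordered families of vertex-disjoint copies of $\sPb$ and $\sPc$ in $\ggndd$ satisfying \ref{Pgap2}, so in particular $0\le\ZZ\lm\le\Xb^{\ell}\Xc^{m}$ by overcounting, and therefore $\ZZ\lm^2\le\Xb^{2\ell}\Xc^{2m}$. Applying \refL{LA0} with $2\ell$ and $2m$ in place of $\ell$ and $m$ (this is legitimate because \refL{LA0} needs only \eqref{D2}, which is implied by the hypothesis \eqref{D2lim} of \refL{LA3}) gives $\E\ZZ\lm^2\le\E\bigpar{\Xb^{2\ell}\Xc^{2m}}=O\bigpar{n^{2\ell+2m}}$, i.e.\ $\sup_n\E\bigpar{n^{-\ell-m}\ZZ\lm}^2<\infty$. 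This yields uniform integrability: for any $K>0$, $\E\bigsqpar{n^{-\ell-m}\ZZ\lm\,\indic{n^{-\ell-m}\ZZ\lm>K}}\le K\qw\,\E\bigpar{n^{-\ell-m}\ZZ\lm}^2=O\bigpar{K\qw}$, uniformly in $n$. Combining this with \refL{LA2} gives $n^{-\ell-m}\ZZ\lm\to\ga\lm$ in $L^1$, which is \eqref{la3}.

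I do not expect any real obstacle here; the substantive work is already contained in \refL{LA0} and \refL{LA2}, and the only points needing a moment's care are the overcount inequality $\ZZ\lm\le\Xb^{\ell}\Xc^{m}$ and checking that \refL{LA0} is applicable under the present hypotheses. If one prefers to avoid invoking uniform integrability abstractly, the same conclusion can be reached directly: split $\E\abs{\ZZ\lm-\ga\lm n^{\ell+m}}$ according to the event $\bigabs{n^{-\ell-m}\ZZ\lm-\ga\lm}\le\eps$, whose probability tends to $1$ by \refL{LA2}; on this event the integrand is $\le\eps\, n^{\ell+m}$, while on its complement one bounds the integrand by the \CSineq{} using the $L^2$ estimate above and the fact that the complement has probability $o(1)$. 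Letting $\eps\to0$ then gives \eqref{la3}.
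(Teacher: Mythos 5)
Your proof is correct and follows essentially the same route as the paper: the paper's proof also bounds $\ZZ\lm$ by the overcount $\Xb^\ell\Xc^m$ (via \eqref{trp}), applies \refL{LA0} with doubled exponents to get a uniform $L^2$ bound on $n^{-\ell-m}\ZZ\lm$, and then upgrades the convergence in probability of \refL{LA2} to $L^1$-convergence via uniform integrability. No gaps; the alternative "split on the event" argument you sketch is just an unwinding of the same uniform-integrability step.
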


\begin{proof}
  Let $Y_n:=  n^{-\ell-m}\ZZ\lm$.
We have $Y_n\le n^{-\ell-m}\Xb^\ell\Xc^m$ by \eqref{trp}, and thus
by \refL{LA0},
\begin{align}
  \E [Y_n^2]
  \le n^{-2\ell-2m}\E\bigsqpar{\Xb^{2\ell}\Xc^{2m}}=O(1).
\end{align}
Hence, the sequence
$Y_n$ ($n\ge1$) is uniformly integrable
(see \eg{} \cite[Theorem 5.4.2]{Gut}), and thus \eqref{la2} implies
$L^1$-convergence \cite[Theorem 5.5.4]{Gut}, \ie,
\begin{align}\label{rib}
    \E\bigabs{Y_n-\ga\lm}\to0,
\end{align}
which is equivalent to \eqref{la3}.
\end{proof}

\section{Proof of \refT{T1}}\label{SpfT1}


We now transfer the results in \refS{Sgg} to the simple random graph $\gndd$.

  \begin{lemma}\label{LB}
    The results in Lemmas \ref{LA0}, \ref{LA1}, \ref{LA2}, \ref{LA3}
    hold also conditioned on the event that $\ggndd$ is simple.
    In other words, the corresponding results for $\gndd$ hold as well.
  \end{lemma}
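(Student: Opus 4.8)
The plan is to derive all four conditional statements from a single soft fact already recorded in the introduction: by \eqref{liminf}, under \eqref{D2} there is a constant $c>0$ with $\P\bigpar{\ggndd\text{ is simple}}\ge c$ for all large $n$. Write $\cE_n:=\set{\ggndd\text{ is simple}}$, and recall that by construction $\gndd$ has the distribution of $\ggndd$ conditioned on $\cE_n$, so that the conditional statements are exactly the assertions for $\gndd$. The elementary inequality to be used repeatedly is that, for any nonnegative random variable $W_n$ and all large $n$,
\begin{align*}
  \E\bigsqpar{W_n\mid\cE_n}=\frac{\E\bigsqpar{W_n\indic{\cE_n}}}{\P(\cE_n)}\le\frac{\E W_n}{c}.
\end{align*}

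First I would handle \refL{LA0} by taking $W_n:=\Xb^\ell\Xc^m$: the displayed inequality and the unconditional bound give $\E\bigsqpar{\Xb^\ell\Xc^m\mid\cE_n}\le c\qw\E\bigpar{\Xb^\ell\Xc^m}=O\bigpar{n^{\ell+m}}$. For the convergence-in-probability statements \eqref{la1b}, \eqref{la1c} of \refL{LA1} and \eqref{la2} of \refL{LA2}, I would take $W_n$ to be the indicator of the relevant deviation event: if $Z_n\pto z$ holds unconditionally, then for every $\eps>0$,
\begin{align*}
  \P\bigpar{\abs{Z_n-z}>\eps\mid\cE_n}\le c\qw\P\bigpar{\abs{Z_n-z}>\eps}\to0 ,
\end{align*}
so $Z_n\pto z$ conditionally on $\cE_n$ as well; applying this to $Z_n=\Xb/n$, $Z_n=\Xc/n$ and $Z_n=n^{-\ell-m}\ZZ\lm$ yields the conditional forms of \refLs{LA1} and \ref{LA2}. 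Finally, for \refL{LA3} I would take $W_n:=\bigabs{\ZZ\lm-\ga\lm n^{\ell+m}}$ and conclude $\E\bigsqpar{W_n\mid\cE_n}\le c\qw\E W_n=o\bigpar{n^{\ell+m}}$.

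There is essentially no obstacle here beyond invoking \eqref{liminf}; the only point requiring a word of care is that \refLs{LA1}--\ref{LA3} were proved only after passing, via the subsubsequence principle of \refS{Ssubsub}, to a subsequence along which \eqref{D2lim} holds. However, \eqref{liminf} is valid along any subsequence satisfying \eqref{D2}, and conditioning on $\cE_n$ commutes with restricting to a subsequence, so the bounds above apply verbatim on that subsequence and hence deliver all of Lemmas \ref{LA0}--\ref{LA3} for $\gndd$.
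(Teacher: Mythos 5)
Your proposal is correct and is exactly the argument the paper has in mind: its proof of this lemma is simply ``an immediate consequence of \eqref{liminf}'', and your write-up just spells out the standard conditioning step $\E[W_n\mid \cE_n]\le c\qw\E W_n$ (resp.\ $\P(A\mid\cE_n)\le c\qw\P(A)$) that makes this immediate. Your remark about the subsequence reduction is a sensible extra word of care but introduces no deviation from the paper's route.
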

  \begin{proof}
    An immediate consequence of \eqref{liminf}.
  \end{proof}

  Let
  $Z\lm:=\zeta\lm(\gndd)$.

  \begin{lemma}
  \label{LB3}
  Assume that\/ $\dd$ satisfies  \eqref{D2lim} and \eqref{dmaxo}.
  Let $\ell,m\ge0$. Then,
\begin{gather}
  \label{lb3a}
  \E Z\lm= \bigpar{\ga\lm+o(1)}n^{\ell+m},
  \\\label{lb3b}
    \E| Z\lm-\E Z\lm|    =o\bigpar{n^{\ell+m}}.
  \end{gather}
\end{lemma}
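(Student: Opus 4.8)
The plan is to deduce \refL{LB3} directly from \refL{LA3} via the conditioning principle of \refL{LB}, and then to read off the two displayed estimates by elementary inequalities; there is essentially no further work. Recall that $\gndd$ has the same distribution as $\ggndd$ conditioned on being simple, and that $Z\lm=\zeta\lm(\gndd)$ while $\ZZ\lm=\zeta\lm(\ggndd)$. By \refL{LB} applied to \refL{LA3} (which is among the lemmas listed there), the conclusion of \refL{LA3} holds conditioned on $\ggndd$ being simple, i.e.\ with $\ZZ\lm$ replaced by $Z\lm$:
\[
  \E\bigabs{Z\lm-\ga\lm n^{\ell+m}}=o\bigpar{n^{\ell+m}}.
\]
This single estimate is the only input I would use.

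Given this, \eqref{lb3a} follows at once: by Jensen's inequality,
\[
  \bigabs{\E Z\lm-\ga\lm n^{\ell+m}}
  =\bigabs{\E\bigpar{Z\lm-\ga\lm n^{\ell+m}}}
  \le\E\bigabs{Z\lm-\ga\lm n^{\ell+m}}
  =o\bigpar{n^{\ell+m}},
\]
so that $\E Z\lm=\ga\lm n^{\ell+m}+o\bigpar{n^{\ell+m}}=\bigpar{\ga\lm+o(1)}n^{\ell+m}$. For \eqref{lb3b}, I would insert $\ga\lm n^{\ell+m}$ and use the triangle inequality together with the two previous displays:
\[
  \E\bigabs{Z\lm-\E Z\lm}
  \le\E\bigabs{Z\lm-\ga\lm n^{\ell+m}}+\bigabs{\ga\lm n^{\ell+m}-\E Z\lm}
  \le2\,\E\bigabs{Z\lm-\ga\lm n^{\ell+m}}
  =o\bigpar{n^{\ell+m}}.
\]

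There is no real obstacle in this argument. The only points to verify are that \refL{LB} indeed transfers the $L^1$ statement of \refL{LA3} (it does, since \refL{LA3} appears explicitly in its statement) and that one is entitled to pass from ``conditioned on $\ggndd$ simple'' to a statement about $\gndd$ — but this is exactly the content of \refL{LB} together with the defining property of the configuration model, both already recorded in the excerpt.
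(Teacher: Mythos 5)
Your proposal is correct and follows the paper's own route exactly: the paper likewise deduces $\E\bigl|Z\lm-\ga\lm n^{\ell+m}\bigr|=o\bigl(n^{\ell+m}\bigr)$ from Lemmas \ref{LA3} and \ref{LB}, and then obtains \eqref{lb3a} and \eqref{lb3b} by the same elementary triangle-inequality steps you spell out. Nothing is missing.
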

\begin{proof}
  By \refLs{LA3} and \ref{LB},
  \begin{align}
    \label{lbb3}
            \E\bigabs{Z\lm-\ga\lm n^{\ell+m}}    =o\bigpar{n^{\ell+m}}.
  \end{align}
This implies \eqref{lb3a}, and then \eqref{lb3a} and \eqref{lbb3} yield
\eqref{lb3b}.
\end{proof}

\begin{proof}[Proof of \refT{T1}]
  As said in \refS{Ssubsub}, we may assume that \eqref{D2lim} holds and $\nu>0$.
  Fix $s\ge0$.
  Recalling the notations $\Zgs:=\zetags\bigpar{\gndd}$ and
  $Z\lm:=\zeta\lm\bigpar{\gndd}$, we see that \eqref{alice} and \eqref{lb3b}
  imply that 
  \begin{align}
    \label{lbb4}
        \E|\Zgs-\E\Zgs| =o\bigpar{n^{s}}.
  \end{align}
  Furthermore, \eqref{alice} and \eqref{lb3a}  imply
  \begin{equation}\label{lbyx}
    \E\Zgs \ge \E Z_{s,0} =\bigpar{\ga_{s,0}+o(1)}n^s.
  \end{equation}
  Hence, using \eqref{bob},
  noting that $\ga_{s,0}>0$, 
  \begin{align}
    \label{lbbb}
      \normm{\hmugs-\umu}
=        \frac{\E|\Zgs-\E\Zgs|}{\E\Zgs}\to0.
  \end{align}
  
Let $p_s:=\P(\cG_s)$, and $\pc:=\P(\bcG)$. Then $\pc+\sum_{s}p_s=1$, and,
recalling \eqref{murg} and  letting $\hmugc$ be the distribution of
$\hgndd$ conditioned on $\bcG$,
\begin{align}
  \hmu = \pc\hmugc+\sum_{s=0}^\infty p_s\hmugs.
\end{align}
Consequently, 
for any $K\ge1$,
using \eqref{bob},
\begin{align}\label{slu}
  \normm{\hmu-\umu}&
  =\Bignorm{\pc(\hmugc-\umu)+\sum_{s\ge0} p_s(\hmugs-\umu)}_{\cM(\GGn)}
  \notag\\&                   
  \le \pc\normm{\hmugc-\umu}+\sum_{s\ge 0} p_s \normm{\hmugs-\umu}
                    \notag\\&
    \le  2\pc+\sum_{s=0}^K\normm{\hmugs-\umu}  +\sum_{s>K}2 p_s.
\end{align}
By \refL{LSG}\ref{LSG:G}, $\pc=o(1)$, and by \eqref{lbbb},
  $\normm{\hmugs-\umu}=o(1)$ for every fixed $s$.
  Furthermore, 
    recall that $\cG_s$ implies $L+M=S=s$, see \eqref{SG}.
  Hence, for any fixed $K$, \eqref{slu} implies, using \eqref{ELMC},
  \begin{align}\label{erika}
  \normm{\hmu-\umu}&
\le  o(1)+o(1) 
                    +\sum_{s>K}2\P(L+M=s)
    = o(1)+2\P(L+M>K)
    \notag\\&
    \le o(1)+\frac{2\E(L+M)}{K}
                            \le o(1)+\frac{C}{K}.
  \end{align}
  Thus $\limsup_\ntoo\normm{\hmu-\umu}\le C/K$,
  and then letting $K\to\infty$ yields
  $\normm{\hmu-\umu}\to0$.
  This completes the
proof of \eqref{t1a} by \eqref{dtv1}. The final sentence follows by
\eqref{dtv3}.
\end{proof}

\begin{proof}[Proof of \refC{C1}]
  Using a coupling such that \eqref{t1b} holds, we have $\P(X_n\neq\hX_n)\to0$,
and the conclusion follows.
\end{proof}

\begin{proof}[Proof of \refC{C2}]
  Let $\hX_n:=f_n\bigpar{\hgndd}$.
  Then the assumption \eqref{diff0} says $\hX_n-\xX_n\pto0$.
  We have also assumed $\xX_n\dto Y$, and it follows that $\hX\dto Y$.
  Hence the result follows by \refC{C1}.
\end{proof}

\section{Proof of \refT{T2}}\label{SpfT2}

Since we do not assume \eqref{dmaxo},
the construction is not necessarily golden \whpx.
(There may be \eg{} triple edges in $\ggndd$.)
Hence we use in this section silver constructions.
Recall the notation $\Zss:=\zetass(\gndd)$.

We define, \cf{} \eqref{trox}, for a multigraph $H$,
\begin{align}\label{troy}
  Y_H:=\psi_H\bigpar{\gndd}.
\end{align}

\begin{lemma}
  \label{LBB}
  Assume that\/ $\dd$ satisfies  \eqref{D2lim} and $\nu>0$.
  Let $s\ge0$.
  Then, 
  \begin{align}
    \E\Zss &\ge c n^s,\label{lbb1}
    \\
       \E\Zss^2 &\le C n^{2s},\label{lbb2}
  \end{align}
\end{lemma}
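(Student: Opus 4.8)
The plan is to establish the two estimates by very different routes. The upper bound \eqref{lbb2} is the easy one: since every weight $w(\cR;G)$ lies in $(0,1]$, $\Zss=\zetass(\gndd)$ is at most the number of $s$-element sets of subgraphs of $\gndd$ each isomorphic to $\sPb$ or $\sPc$, so $\Zss\le\bigl(\psi_{\sPb}(\gndd)+\psi_{\sPc}(\gndd)\bigr)^s$. Expanding the $2s$th power and applying \refLs{LA0} and \ref{LB} to each term (note that \refL{LA0}, and hence the part of \refL{LB} we need, uses only \eqref{D2}), one gets $\E\Zss^2\le\E\bigl(\psi_{\sPb}(\gndd)+\psi_{\sPc}(\gndd)\bigr)^{2s}=O(n^{2s})$.

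For the lower bound \eqref{lbb1} the naive idea of bounding $\Zss$ below by the count $Z_{s,0}$ of \emph{vertex}-disjoint families of $s$ copies of $\sPb$ is useless here, because without \eqref{dmaxo} there may be no $s$ pairwise vertex-disjoint cherries at all (already for $s=3$ in \refE{EO}). Instead I would keep the path sets consisting of $s$ copies of $\sPb$ that are merely pairwise \emph{edge}-disjoint, i.e.\ satisfy \ref{Pe}, together with \ref{Pv}, \ref{Pgap1} and \ref{Pgap2}; for such a set \ref{P23} and \ref{Plast} are automatic and, since there are no copies of $\sPc$, the weight equals $1$. Call a copy of $\sPb$ in $\gndd$ \emph{good} if its gap is a non-edge (equivalently, it does not lie in a triangle), write $P$ for the number of good copies of $\sPb$, and call two good copies \emph{compatible} if they are edge-disjoint and satisfy \ref{Pv} and \ref{Pgap1}. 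Counting ordered $s$-tuples of pairwise compatible good copies and using a Bonferroni inequality gives $s!\,\zetass(\gndd)\ge P^s-\binom{s}{2}P^{s-2}I$, where $I$ is the number of ordered pairs of copies of $\sPb$ in $\gndd$ that fail to be compatible. Since $P\le\sum_i\binom{d_i}{2}=O(n)$ deterministically, it then suffices to prove (i) $P\ge cn$ \whp, and (ii) $I=\op(n^2)$; these give $\zetass(\gndd)\ge\bigl((\nu/4)n\bigr)^s/s!-\op(n^s)\ge c'n^s$ \whp, hence $\E\Zss\ge\tfrac12 c'n^s$ for $n$ large. For (i) I would use \eqref{gal}, which gives $\psi_{\sPb}(\ggndd)\ge\sum_i\binom{d_i}{2}-2\dmax L-2M$; since $L,M=\Op(1)$ by \eqref{EL}--\eqref{EM} and $\dmax=O(n\qq)$, while $\sum_i\binom{d_i}{2}=(\nu/2+o(1))n$ by \eqref{nu}, this is $\ge(\nu/3)n$ \whp, and conditioning on $\ggndd$ being simple and using \eqref{liminf} transfers the bound to $\gndd$; subtracting the $3\psi_{\sC_3}(\gndd)=\Op(1)$ cherries lying in triangles ($\E\psi_{\sC_3}(\ggndd)=O(1)$ by a computation as in \eqref{EM}, transferred) yields $P\ge(\nu/4)n$ \whp.

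The main obstacle is (ii), $I=\op(n^2)$: a crude estimate gives only $I=O(\dmax^2 n)$, which is of order $n^2$ when $\dmax$ is of order $n\qq$, so one must be careful. I would split $I$ by the type of incompatibility and bound each part, all three bounds being essentially deterministic functions of the degree sequence --- this is exactly what lets the argument run without \eqref{dmaxo}. Pairs sharing an edge: each edge $xy$ of $\gndd$ lies in at most $d_x+d_y$ copies of $\sPb$, so this part is at most $\sum_{xy\in E(\gndd)}(d_x+d_y)^2\le2\sum_i d_i^3\le2\dmax\sum_i d_i^2=O(\dmax n)$. Pairs with the same gap: two copies of $\sPb$ with a common gap either coincide or have union a copy of $\sC_4$, so their number is $\psi_{\sPb}(\gndd)+4\psi_{\sC_4}(\gndd)=O(n)+\Op(1)$, using $\psi_{\sPb}(\gndd)\le\sum_i\binom{d_i}{2}=O(n)$ and $\E\psi_{\sC_4}(\ggndd)=O(1)$ (transferred via \eqref{liminf}). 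Pairs in which a leaf of one copy is the middle vertex of the other: summing over the shared vertex, this part is at most $C\sum_v\binom{d_v}{2}\sum_{u\sim v}(d_u-1)$, which I would control via the \CSineq{} in the form $\sum_{u\sim v}d_u\le\bigl(d_v\sum_i d_i^2\bigr)\qq=O\bigl((d_v n)\qq\bigr)$, giving $O\bigl(n\qq\sum_v d_v^{5/2}\bigr)\le O\bigl(n\qq\dmax^{1/2}\sum_v d_v^2\bigr)=O\bigl(\dmax^{1/2}n^{3/2}\bigr)$. Since $\dmax=O(n\qq)$, all three parts are $\op(n^2)$, so $I=\op(n^2)$, and the proof is complete.
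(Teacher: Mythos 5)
Your proof is correct, but the lower bound \eqref{lbb1} is handled by a genuinely different argument than the paper's. The paper first passes to a subsequence along which $b:=\lim\dmax/n\qq$ exists and then splits into two cases: if $b=0$, condition \eqref{dmaxo} holds and \eqref{lbb1} follows from the golden-construction asymptotics ($\Zss\ge\Zgs$ together with \refL{LB3}, i.e.\ the machinery of \refLs{LA1}--\ref{LA3}); if $b>0$, it builds $\Theta(n^{s})$ admissible path-sets directly as $s$ cherries through the maximum-degree vertex, formed from $2s$ distinct ``green'' half-edges there, correcting for \ref{Pgap2} via $Y_{\sC_3}=\Op(1)$ (note that these cherries all share their midpoint, which is exactly why vertex-disjointness is not required in the silver setting -- the same observation that underlies your construction). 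Your single inclusion--exclusion count of edge-disjoint ``good'' cherries, with the three-way deterministic-plus-$\Op(1)$ bound on incompatible pairs ($O(\dmax n)$ for shared edges, $O(n)+\Op(1)$ via $\sC_4$ for equal gaps, $O(\dmax\qq n^{3/2})$ via \CS{} for the \ref{Pv} failures), covers both regimes uniformly: it avoids the subsequence selection on $\dmax/n\qq$ and any reliance on \refLs{LA1}--\ref{LA3}/\ref{LB3}, at the cost of a somewhat longer elementary computation; it correctly exploits that sets consisting only of $\sPb$'s have weight $1$ so \ref{Plast} is vacuous, and that \eqref{D2} alone forces $\dmax=O(n\qq)$, which is what makes all three error terms $\op(n^2)$. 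Your upper bound \eqref{lbb2} is essentially the paper's proof (weights $\le1$, expand a power of $\psi_{\sPb}+\psi_{\sPc}$, apply \refL{LA0} transferred by \eqref{liminf}).
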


\begin{proof}
  \pfitem{i}
  By selecting subsequences, see \refS{Ssubsub}, we may assume that the
  limit $b:=\lim_\ntoo \dmax/n\qq$ exists.
  We consider two cases.
  \pfitem{ia} $b=0$. This means $\dmax=o(n\qq)$, \ie, \eqref{dmaxo} holds;
  thus \refL{LB3} and its consequence \eqref{lbyx} hold.
  By definition, $\zetass(G)\ge\zetags(G)$ for all $G\in\GGn$.
Hence, $\Zss\ge\Zgs$, and \eqref{lbb1} follows by \eqref{lbyx}.

\pfitem{ib}
$b>0$. We may assume that $\dmax=d_1$. Then, for large $n$,
$d_1=\dmax\ge \frac{b}{2}n$. Say that a half-edge at vertex 1 is
\emph{green} if it is not part of a loop or multiple edge, and let $W$ be
the number of green edges.
Then $W\ge d_1-2L-2M\ge \frac{b}{2} n\qq -\Op(1)$, and thus \whp{} $W\ge
\frac{b}{3}n\qq$.
A pair of green half-edges defines a path of length 2 in $\gndd$, with 1 as
midpoint.
Hence, a sequence of $2s$ distinct green half-edges defines $s$ paths of 
length 2. This set of paths satisfies \ref{P23}--\ref{Pgap1}
and (trivially) \ref{Plast}. It fails
to satisfy \ref{Pgap2} only if one of the paths is part of a $\sC_3$,
and the number of such sequences is $O(Y_{\sC_3}W^{2s-2})$.
We have, by the usual conditioning argument with \eqref{liminf},
and a simple calculation
(used also in the proof of \refL{LA1}),
\begin{align}
  \E Y_{\sC_3}
  =
  \E \bigpar{X_{\sC_3}\mid\ggndd\text{ is simple}}
  \le C\E X_{\sC_3} =O(1).
\end{align}
Hence, \whp{} $Y_{\sC_3}\le n\qq$.
Consequently, \whp, crudely,
\begin{align}
  \Zss\ge \binom{W}{2s} - CW^{2s-2}Y_{\sC_3}
  \ge c b^{2s} n^s -  O\bigpar{n^{s-1/2}}
    \ge c n^s,
\end{align}
which implies \eqref{lbb1}.

  \pfitem{ii}
  The weights $w(\cR;G)\in\oi$, and thus
  $\zetass(G)$ is at most the number of sets of $s$ paths of
  lengths 2 or 3 in $G$. Hence,
  \begin{align}
    \Zss=
    \zetass(\gndd) \le \sum_{\ell=0}^s \Yb^\ell \Yc^{s-\ell}.
  \end{align}
  Thus, for fixed $s\ge0$,
  recalling $X_k=X_{\sP_k}$ in \refS{Sgg} and \refL{LA0}, and \eqref{liminf},
  \begin{align}
    \E \Zss^2
    &\le C\sum_{\ell=0}^{2s} \E[\Yb^\ell \Yc^{2s-\ell}]
 =C\sum_{\ell=0}^{2s} \E\bigpar{X_2^\ell X_3^{2s-\ell}\mid \ggndd \text{ is simple}}
      \notag\\&
    \le 
    \frac{C}{\P\bigpar{\ggndd \text{ is simple}}}
    \sum_{\ell=0}^{2s} \E\bigpar{X_2^\ell X_3^{2s-\ell}}
    =O\bigpar{n^{2s}}.
  \end{align}
\end{proof}

\begin{proof}[Proof of \refT{T2}]
Again, as said in \refS{Ssubsub}, we may assume that \eqref{D2lim} holds and
$\nu>0$. 
  
Recall the definition \eqref{contig}, and
  fix an arbitrary sequence of subsets $\cE_n\subseteq\GGn$.
(An event for these random graphs may by identified with a subset
of $\GGn$.)

  \pfitem{i}
  Suppose that
  \begin{align}\label{hlim0}
  \P\bigpar{\hgndd\in\cE_n}\to0.  
  \end{align}
  The event that $\ggndd$ is simple is the same as $S=0$
  (\ie, no switchings are
    made), and in this case $\hgndd=\ggndd$. Hence,
    \begin{align}
      \P\bigpar{\gndd\in\cE_n}
      &=
\P\bigpar{\ggndd\in\cE_n\mid S=0}
        =
        \frac{  \P\bigpar{\ggndd\in\cE_n \text{ and } S=0} }{\P(S=0)}
        \notag\\&
            =
        \frac{  \P\bigpar{\hgndd\in\cE_n \text{ and } S=0} }{\P(S=0)}
      \le
              \frac{  \P\bigpar{\hgndd\in\cE_n }}{\P(S=0)}
\to0,
    \end{align}
by the assumption \eqref{hlim0} and \eqref{liminf}.    

  \pfitem{ii}
  Suppose conversely that
  \begin{align}\label{lim0}
    \P\bigpar{\gndd\in\cE_n}\to0.  
  \end{align}
  Fix $s\ge0$ and let now $p_s:=\P(\cS_s)$.
  Then \eqref{murs} yields
  \begin{align}\label{tok}
    \P\bigpar{\hgndd\in\cE_n \text{ and }\cS_s}
    =
    p_s    \P\bigpar{\hgndd\in\cE_n \mid\cS_s}
    =p_s\hmuss(\cE_n).
  \end{align}
Furthermore, \eqref{bw} implies
\begin{align}\label{tqk}
  \hmuss(\cE_n)
=\sum_{G\in\cE_n}\frac{\zetass(G)}{\E\Zss}\gl\set{G}
  =\frac{ \E [\Zss\indic{\gndd\in\cE_n}]}{\E\Zss}.
\end{align}
The \CSineq{} and \eqref{tqk} yield, using \eqref{lbb1}--\eqref{lbb2} and
the assumption \eqref{lim0},
\begin{align}\label{qtk}
  \hmuss(\cE_n)
\le\frac{\bigpar{ \E [\Zss^2] \P\xpar{\gndd\in\cE_n}}\qq}{\E\Zss}
  \le C \P\xpar{\gndd\in\cE_n}\qq
  \to0.
\end{align}
Hence, for every fixed $s\ge0$, \eqref{tok} and \eqref{qtk} imply
\begin{align}
  \label{rtk}
  \P\bigpar{\hgndd\in\cE_n \text{ and }\cS_s}\to0.
\end{align}

We now argue similarly to the final part of the proof of \refT{T1}.
For every fixed $K\ge1$,
using also \refL{LSG}\ref{LSG:S},
\eqref{SS2} and \eqref{ELMC},
\begin{align}
  \P\bigpar{\hgndd\in\cE_n}
&  \le \P(\bcS)+
  \sum_{s=0}^\infty \P\bigpar{\hgndd\in\cE_n \text{ and }\cS_s}
  \notag\\&
  \le \P(\bcS)+
  \sum_{s=0}^K \P\bigpar{\hgndd\in\cE_n \text{ and }\cS_s}
  +\sum_{s>K}\P\bigpar{\cS_s}
  \notag\\&
  = o(1)+o(1) + \P(\cS\text{ and }S>K).
  \notag\\&
  \le o(1) + \P(L+M>K)
    \le o(1) + \frac{\E(L+M)}{K}
  \notag\\&
    \le o(1) + \frac{C}{K}.
\end{align}
Consequently,
$\limsup_\ntoo   \P\bigpar{\hgndd\in\cE_n}\le C/K$, and then letting
$K\to\infty$ yields \eqref{hlim0}.
\end{proof}

\section{Applications}\label{Sapp}

Let the random variable $D_n$ be the degree of a uniformly random vertex,
and note that \eqref{D2lim} can be written $\E D_n\to\mu$ and $\E
D_n^2\to\mu_2$.
We will in
the applications below use the standard assumption that there exists a
random variable $D$ such that
\begin{align}
  \label{Dlim}
  D_n\dto D.
\end{align}
We will also sometimes assume that \eqref{D2lim} is strengthened to
\begin{align}
  \label{D2ui}
  \E D_n^2\to\E D^2<\infty.
\end{align}
Equivalently, assuming \eqref{Dlim},
the  sequence $D_n^2$ is uniformly integrable, see \cite[Theorem 5.5.9]{Gut}.
Note that this implies that \eqref{D2lim} holds with $\mu=\E D$.
It is also easy to see that
\eqref{Dlim}--\eqref{D2ui} imply
\eqref{dmaxo}.

\begin{example}\label{Egiant}
  Assume \eqref{Dlim}--\eqref{D2ui} and $\P(D=1)>0$.
  Assume also that $\nu-\mu=\E D(D-2)>0$; this is the \emph{supercritical}
  case where there is \whp{} a giant component of order $\Theta(n)$ in both
  $\ggndd$ and $\gndd$,
  see  \citet{MolloyReed95,MolloyReed98} with refinements in, \eg,
\cite{SJ204}, 
\cite{BollobasRiordan-old},
\cite{JossEtAl}.

Let $|\cC_k|=|\cC_k(G)|$ be the order of the $k$th largest component in a
  multigraph $G$. 
  It was proved by
  \citet{BarbourR} (under somewhat stronger assumptions),
  with a different proof in \cite{SJ338} (under the conditions here),
  that the size $|\cC_1|$ of the giant component is asymptotically normal
  for $\ggndd$:
  \begin{align}\label{giant}
    \frac{|\cC_1(\ggndd)|-\E|\cC_1(\ggndd)|}{\sqrt{n}}\dto N(0,\gss),
  \end{align}
  where the asymptotic variance $\gss$ was calculated explicitly by
  \citet{BallNeal}.

  It is shown in \cite{SJ338}, by a non-trivial extra argument,
  that \eqref{giant} holds also for $\gndd$.
  We can now replace that argument, and give a simpler proof of asymptotic
  normality for $\gndd$.

Consider the components of a multigraph as sets of vertices (ignoring the
edges).
A switching will either leave all components unchanged, or it will merge two
components. Hence, a sequence of $S$ switchings will change the size
$|\cC_1|$ of the largest component by at most $S|\cC_2|$; consequently,
\begin{align}\label{c1c2}
\bigabs{ |\cC_1(\hgndd)|-|\cC_1(\ggndd)| }\le S|\cC_2(\ggndd)|.
\end{align}
Furthermore, under our assumptions, 
\cite[Theorem 2]{BollobasRiordan-old}  and \cite[Lemma 9.4]{SJ338}
imply $|\cC_2(\ggndd)|\le C \log n$ \whp,
while \refT{Tnotbad} yields $S=\Op(1)$.
Hence, 
$S|\cC_2(\ggndd)|=\op(n\qq)$, 
and \eqref{c1c2} shows that \eqref{diff0} holds for
$f_n(G):=n\qqw\bigpar{|\cC_1(G)|-\E|\cC_1(\ggndd)|}$.
Consequently, \refC{C2} applies and shows
that \eqref{giant} implies
  \begin{align}\label{giantG}
    \frac{|\cC_1(\gndd)|-\E|\cC_1(\ggndd)|}{\sqrt{n}}\dto N(0,\gss),
  \end{align}
  Furthermore, if $X_n$ denotes the \lhs{} of \eqref{giant}, then also
  $\E X_n^2\to\gss$ \cite{BarbourR,SJ338}, and thus $X_n^2$ are uniformly
  integrable \cite[Theorem 5.5.9]{Gut}. Hence, using \eqref{liminf},
  $X_n^2$ are uniformly integrable also
  conditioned on $\ggndd$ being simple,
  and thus the mean and variance converge in \eqref{giantG} too.
  In particular,
  $\E|\cC_1(\gndd)|-\E|\cC_1(\ggndd)|=o\xpar{\sqrt{n}}$,
  and thus $\E|\cC_1(\ggndd)|$ can be replaced by $\E|\cC_1(\gndd)|$ in
  \eqref{giantG}. 
\end{example}

\begin{remark}
  \citet{Ball} has proved related results on asymptotic normality for the
  size of SIR epidemics on $\ggndd$.
  As a special case, he obtains asymptotic normality of the size of the
  giant component 
  for (bond or site) percolation in $\ggndd$ (in the supercritical case).

  It seems possible that at least some of these
  results too can be  extended to $\gndd$ by our \refC{C2}, but it remains
  to verify \eqref{diff0} for them.
\end{remark}

\begin{example}\label{Etrees}
Let $T$ be a fixed tree, and let $n_T(G)$ be the number of components
isomorphic to $T$ in a (multi)graph $G$.
  Assume \eqref{Dlim}--\eqref{D2ui}. 
  Then, by
    \citet{BarbourR} (under somewhat stronger assumptions),
and \cite{SJ338} (with a different proof)
    \begin{align}\label{nT}
    \frac{n_T(\ggndd)-\E n_T(\ggndd)}{\sqrt{n}}\dto N(0,\gss_T),
  \end{align}
  for some $\gss_T\ge0$ (with $\gss_T>0$ except in some rather trivial cases).
  It was shown in \cite{SJ338} that \eqref{nT} holds also for $\gndd$, again
  with an extra argument;
  we can now replace that by a simpler proof.
  
  A switching can change at most two components, and thus $n_T$ is changed
  by at most 2; hence, using \refT{Tnotbad},
\begin{align}
\bigabs{n_T(\hgndd)-n_T(\ggndd) }\le 2S = \Op(1).
\end{align}
Consequently, \refC{C2} applies as in \refE{Egiant}, and shows that 
\eqref{nT} holds also for $\gndd$.
\end{example}

\begin{example}\label{Esj313}
  Assume 
  \eqref{Dlim}, $\P\bigpar{D\notin\set{0,2}>0}$
  and \eqref{D2}.
  Assume also
  \begin{align}
    \eps_n:=\frac{\E D_n(D_n-2)}{\E D_n}
=O\bigpar{n^{-1/3}(\E D_n^3)^{2/3}},
  \end{align}
  which means that we are in the \emph{critical window}, and that
  \begin{align}
    \dmax
     =o\bigpar{n^{1/3}(\E D_n^3)^{1/3}},
  \end{align}
  which easily is seen to imply \eqref{dmaxo}.
  Then,
  \citet[Theorem 1.1]{HatamiMolloy} (under somewhat stronger conditions)
  showed that $|\cC_1|$ is of the order $\Upsilon_ n:=n^{2/3}(\E D_n^3)^{-1/3}$.
  Moreover, see \citet[Theorem 2.12]{SJ313},
  $|\cC_1|/\Upsilon_n$ is bounded in probability, but not bounded by a
  constant \whp;
  in other words:
  \begin{romenumerate}
  \item
    For any $\gd>0$ there exists $K=K(\gd)$ such that
    \begin{align}\label{bari}
    \P(|\cC_1|>K\Upsilon_n)\le \gd.  
    \end{align}
    
  \item
    For any $K<\infty$,
    \begin{align}
      \label{barii}
      \liminf_\ntoo \P(|\cC_1|>K\Upsilon_n)>0.
          \end{align}
  \end{romenumerate}
Both parts hold for both $\ggndd$ and $\gndd$; however, there is a technical
difference in the proofs.
Part (i) is proved, by both \cite{HatamiMolloy} and \cite{SJ313} (with
different methods) first for $\ggndd$, and the result for $\gndd$ then
follows immediately by the standard conditioning argument and \eqref{liminf}.

Part (ii) is also proved (by \cite{SJ313}) first for $\ggndd$, but here we
cannot use conditioning directly, and a rather long extra argument is
given in \cite[Section~6.3]{SJ313}.
We can now replace this extra argument by  \refT{T2}.

Note first that, as said in \refE{Egiant}, switchings can only merge
components, but never break them; hence, switchings can only increase
$|\cC_1|$, and thus if \eqref{barii} holds for $\ggndd$, then it holds for
$\hgndd$ too.
Suppose that \eqref{barii} fails for $\gndd$. Then there exists a
subsequence where the probability tends to 0, and the contiguity in
\refT{T2} shows that the same holds for $\hgndd$; a contradiction.
\end{example}

\section*{Acknowledgement}
I thank  Xing Shi Cai for making the pictures.


\newcommand\AAP{\emph{Adv. Appl. Probab.} }
\newcommand\JAP{\emph{J. Appl. Probab.} }
\newcommand\JAMS{\emph{J. \AMS} }
\newcommand\MAMS{\emph{Memoirs \AMS} }
\newcommand\PAMS{\emph{Proc. \AMS} }
\newcommand\TAMS{\emph{Trans. \AMS} }
\newcommand\AnnMS{\emph{Ann. Math. Statist.} }
\newcommand\AnnPr{\emph{Ann. Probab.} }
\newcommand\CPC{\emph{Combin. Probab. Comput.} }
\newcommand\JMAA{\emph{J. Math. Anal. Appl.} }
\newcommand\RSA{\emph{Random Structures Algorithms} }
\newcommand\ZW{\emph{Z. Wahrsch. Verw. Gebiete} }
\newcommand\DMTCS{\jour{Discr. Math. Theor. Comput. Sci.} }

\newcommand\AMS{Amer. Math. Soc.}
\newcommand\Springer{Springer-Verlag}
\newcommand\Wiley{Wiley}

\newcommand\vol{\textbf}
\newcommand\jour{\emph}
\newcommand\book{\emph}
\newcommand\inbook{\emph}
\def\no#1#2,{\unskip#2, no. #1,} 
\newcommand\toappear{\unskip, to appear}

\newcommand\arxiv[1]{\texttt{arXiv:#1}}
\newcommand\arXiv{\arxiv}


\begin{thebibliography}{99}
\def\nobibitem#1\par{}


\bibitem[Athreya and Yogeshwaran(2018+)]{AthreyaY}
Siva Athreya \& D. Yogeshwaran:
Central limit theorem for statistics of subcritical configuration models.
Preprint, 2018.
\arxiv{1808.06778}


\bibitem[Ball(2018+)]{Ball}
  Frank  Ball:
Central limit theorems for SIR epidemics and percolation on configuration
model random graphs.
Preprint, 2018.
\arxiv{1812.03105}

\bibitem[Ball and Neal(2017)]{BallNeal}
Frank Ball \& Peter Neal:
The asymptotic variance of the giant component of configuration model random
graphs.
\emph{Ann. Appl. Probab.} \vol{27} \no2 (2017), 1057--1092.
\MR{3655861}


  
\bibitem[Barbour, Holst and Janson(1992)]{SJI}
  A. D. Barbour, Lars Holst \& Svante Janson:
  \emph{Poisson Approximation}.
  Oxford University Press, Oxford, 1992.
  \MR{1163825}

  \bibitem[Barbour and R\"ollin(2017+)]{BarbourR}
A. D. Barbour \& Adrian R\"ollin:
Central limit theorems in the configuration model.
Preprint, 2017.
\arxiv{1710.02644}

  
\bibitem{BenderCanfield}
Edward A. Bender \& E. Rodney  Canfield:
The asymptotic number of labeled graphs with given degree sequences.
\emph{J. Combinatorial Theory Ser. A} \vol{24} (1978), no. 3, 296--307. 
\MR{0505796} 

\bibitem[Bollob\'as(1981)]{Bollobas-config}
B{\'e}la Bollob\'as:
A probabilistic proof of an asymptotic formula for the number of labelled
regular graphs.
\emph{European J. Combin.} \textbf1 (1980), no. 4, 311--316. 
\MR{0595929}


\bibitem[Bollob\'as and Riordan(2015)]{BollobasRiordan-old}
B{\'e}la Bollob\'as \& Oliver Riordan:
An old approach to the giant component problem.
\emph{J. Combin. Theory Ser. B} \vol{113} (2015), 236--260. 
\MR{3343756}

\bibitem[Britton, Deijfen and Martin-L{\"o}f(2006)]{BrittonDeijfenML}
Tom Britton, Maria~Deijfen \& Anders~Martin-L{\"o}f:
Generating simple random graphs with prescribed degree distribution.
\emph{J. Stat. Phys.}, \textbf{124} (2006), no. 6, 1377--1397.
\MR{2266448}

\bibitem[Gao and Wormald(2016)]{GaoW2016}
Pu Gao \& Nicholas Wormald:
Enumeration of graphs with a heavy-tailed degree sequence.
\emph{Adv. Math.} \vol{287} (2016), 412--450. 
\MR{3422681}
  
\bibitem[Gao and  Wormald(2017)]{GaoW2017}
Pu Gao \& Nicholas Wormald:
Uniform generation of random regular graphs.
\emph{SIAM J. Comput.} \vol{46} (2017), no. 4, 1395--1427. 
\MR{3686817}

\bibitem[Gao and  Wormald(2018)]{GaoW2018}
Pu Gao \& Nicholas Wormald:
Uniform generation of random graphs with power-law degree sequences.
Proceedings of the Twenty-Ninth Annual ACM-SIAM
Symposium on Discrete Algorithms, 1741--1758, SIAM, Philadelphia, PA, 2018.
\MR{3775902}


\bibitem{Gut}
Allan Gut:
\emph{Probability: A Graduate Course},
2nd ed. Springer, New York, 2013. 
\MR{2977961}

\bibitem[Hatami and Molloy(2012)]{HatamiMolloy}
 Hamed Hatami \& Michael Molloy:
The scaling window for a random graph with a given degree sequence.
\emph{Random Structures Algorithms} \textbf{41} (2012), no. 1, 99--123.
\MR{2943428}

\bibitem[van der Hofstad(2017)]{Hofstad}
  Remco van der Hofstad:
  \emph{Random Graphs and Complex Networks. Vol. 1.}
  Cambridge University Press, Cambridge, 2017.
  \MR{3617364}

\bibitem[Janson(2009)]{SJ195}
Svante Janson:
The probability that a random multigraph is simple.
\emph{Combin. Probab. Comput.} \vol{18} (2009), no. 1-2, 205--225.
\MR{2497380}


\bibitem{SJ212}
Svante Janson:
Asymptotic equivalence and contiguity of some random graphs.
\RSA \vol{36} (2010), 26--45.
\MR{2591045}


\bibitem[Janson(2014)]{SJ281}
Svante Janson:
The probability that a random multigraph is simple. II.
\emph{J. Appl. Probab. 51A} (2014), 
123--137. 
\MR{3317354}

\bibitem[Janson(2018+)]{SJ338}
Svante Janson:
Asymptotic normality in random graphs with given vertex degrees.
Preprint, 2018.
\arXiv{1812.08063}


\bibitem[Janson, van der Hofstad and Luczak(2019)]{SJ313}
Svante Janson, Remco van der Hofstad \&  Malwina Luczak:
Component structure of the configuration model:  barely supercritical case.
  \emph{Random Structures Algorithms}, to appear.

\bibitem[Janson and Luczak(2008)]{SJ196}
Svante Janson \&  Malwina Luczak:
Asymptotic normality of the $k$-core in random graphs.
\emph{Ann. Appl. Probab.} \vol{18} (2008), no. 3, 1085--1137.
\MR{2418239}

\bibitem[Janson and Luczak(2009)]{SJ204}
Svante Janson \&  Malwina Luczak:
A new approach to the giant component problem. 
\emph{Random Structures Algorithms} \vol{34} (2009), no. 2, 197--216.
\MR{2490288}


\bibitem[Janson, {\L}uczak and Ruci\'nski(2000)]{JLR}
Svante Janson, Tomasz \L uczak \& Andrzej Ruci\'nski:
\book{Random Graphs}.
\Wiley, New York, 2000.
\MR{1782847}

\bibitem{JossEtAl}
Felix Joos, Guillem Perarnau, Dieter Rautenbach \& Bruce Reed:
How to determine if a random graph with a fixed degree sequence has a giant
component. 
\emph{Probab. Theory Related Fields} \textbf{170} (2018), no. 1-2, 263--310. 
\MR{3748325}

\bibitem[McKay(1985)]{McKay}
Brendan D. McKay:
Asymptotics for symmetric 0-1 matrices with prescribed row sums.
\emph{Ars Combin.} \vol{19A} (1985),  15--25.
\MR{0790916}

\bibitem[McKay and Wormald(1990)]{McKayWormald1990}
Brendan D. McKay \& Nicholas C. Wormald:
Uniform generation of random regular graphs of moderate degree.
\emph{J. Algorithms} \vol{11} (1990), no. 1, 52--67.
\MR{1041166}

\bibitem[Molloy and Reed(1995)]{MolloyReed95}
Michael Molloy \& Bruce Reed:
A critical point for random graphs with a given degree sequence.
\emph{Random Structures Algorithms} \textbf6 (1995), no. 2-3, 161--179. 
\MR{1370952}

\bibitem[Molloy and Reed(1998)]{MolloyReed98}
Michael Molloy \& Bruce Reed:
The size of the giant component of a random graph with a given degree sequence.
\emph{Combin. Probab. Comput.} \vol7 (1998), no. 3, 295--305. 
\MR{1664335}

\bibitem[Riordan(2012)]{Riordan-phase}
Oliver Riordan:
The phase transition in the configuration model.
\emph{Combin. Probab. Comput.} \vol{21} (2012), no. 1-2, 265--299. 
\MR{2900063}

\bibitem[Sj\"ostrand(2019)]{Sjostrand}
Jonas Sj\"ostrand:
Making a multigraph simple by a sequence of double edge swaps.
In preparation.

\bibitem{Wormald81}
Nicholas C. Wormald:
The asymptotic distribution of short cycles in random regular graphs. 
\emph{J. Combin. Theory Ser. B} \textbf{31} (1981), no. 2, 168--182.
\MR{0630980}

\bibitem[Wormald(1984)]{Wormald1984}
Nicholas C. Wormald:
Generating random regular graphs.
\emph{J. Algorithms} \vol5 (1984), no. 2, 247--280. 
\MR{0744493}

\bibitem[Wormald(1999)]{Wormald99}
Nicholas C. Wormald: 
Models of random regular graphs.
\emph{Surveys in Combinatorics  1999 (Canterbury)}, 239--298,
 London Math. Soc. Lecture Note Ser., 267,
 Cambridge Univ. Press, Cambridge, 1999.
\MR{1725006}
 


\end{thebibliography}
\end{document}